\documentclass[11pt]{article}
\usepackage[T1]{fontenc}
\usepackage{lmodern,amsmath,amssymb,amsthm,mathtools,microtype}
\usepackage[margin=1in]{geometry}
\usepackage[colorlinks=true,linkcolor=blue,citecolor=blue,urlcolor=blue]{hyperref}
\usepackage{booktabs}
\usepackage{enumitem}
\allowdisplaybreaks[2]
\numberwithin{equation}{section}
\newtheorem{theorem}{Theorem}[section]
\newtheorem{lemma}[theorem]{Lemma}
\newtheorem{proposition}[theorem]{Proposition}
\newtheorem{corollary}[theorem]{Corollary}
\theoremstyle{definition}\newtheorem{definition}[theorem]{Definition}
\theoremstyle{remark}\newtheorem{remark}[theorem]{Remark}
\newcommand{\E}{\mathbb E}
\newcommand{\Prb}{\mathbb P}
\newcommand{\Ent}{\mathcal E}
\newcommand{\TV}{\mathrm{TV}}
\newcommand{\mix}{\mathrm{mix}}
\newcommand{\ind}{\mathbf 1}
\newcommand{\id}{\mathrm{id}}
\newcommand{\sgn}{\mathrm{sgn}}
\newcommand{\HS}{\mathrm{HS}}
\newcommand{\norm}[1]{\left\lVert#1\right\rVert}

\newcommand{\Ftwo}{\mathbb F_2}

\hypersetup{pdftitle={A subquadratic upper bound for the mixing time of the Thorp shuffle},pdfauthor={Yunjiang Jiang}}
\title{A subquadratic upper bound for the mixing time of the Thorp shuffle}
\author{Yunjiang Jiang}
\date{October 3, 2026}
\begin{document}
\maketitle

\begin{abstract}
Let $n=2^d$.  We prove that the worst-start total-variation mixing time of the
full labelled Thorp shuffle satisfies, for every fixed $0<\varepsilon<1$,
\[
 t_{\mathrm{mix}}(\varepsilon)
 \le C_* d^{5/3}+O_\varepsilon\!\left(d^{4/3}\log(d+2)\right),
 \qquad C_*=5.1536951454\ldots .
\]
Time is measured in ordinary Thorp shuffles.  This improves the previously
published full-deck upper bound $O(d^3)$ of Morris.  The geometric starting
point is elementary: label the $n=2^d$ positions by binary strings.  One
physical Thorp shuffle is a random swap layer on one coordinate matching of
the $d$-dimensional cube, followed by a deterministic cyclic rotation of the
coordinates.  In a rotating frame that deterministic rotation disappears, so
$d$ successive physical shuffles become one \emph{sweep}: one independent
random swap layer in each coordinate direction.  The proof then combines five
ingredients.  First, a collision-graph representation and negative dependence
give pointwise smoothing for ordered marked cards.  Second, an entropy argument
is sharpened by moving an endpoint moment up to the critical exponent
$\beta_c=\log_2(3/2)$, yielding an entropy-preparation clock
$s^2/(2\log(3/2))+O(s\log s)$.  Third, exact multiplicities in the ordered
$h$-tuple permutation representation amplify marked-card bounds into Fourier
operator and energy estimates on low representation levels.  Fourth, entropy
truncation produces a positive mixture whose regular component retains the
marked-card budget.  Finally, only two long blocks need this entropy
regularization: a shorter third block is used solely for additional low-level
smoothing.  Optimizing the three block lengths gives the constant above.  We
state all conventions and reproduce the shuffle-specific analytic estimates in
full; the only black-box inputs are standard strong-Rayleigh negative
dependence and a standard high-level dimension estimate for representations of
$S_n$.
\end{abstract}

\noindent\textbf{2020 Mathematics Subject Classification.} Primary 60J10; Secondary 60C05, 20C30.

\noindent\textbf{Keywords.} Thorp shuffle, card shuffling, mixing time, total variation, entropy, symmetric group, random permutation.

\tableofcontents

\section{Introduction and historical context}\label{sec:introduction}

The Thorp shuffle is a particularly local model of a riffle shuffle.  For an
even number $n$ of cards, split the deck into two equal packets.  For each pair
of current bottom cards, toss a fair coin to decide whether the left card or the
right card is dropped first, and then drop the other card.  Continue until both
packets are empty.  Edward Thorp introduced this model in 1973 in his study of
nonrandom shuffling and the game of Faro \cite{Thorp1973}.  In contrast with
the Gilbert--Shannon--Reeds riffle shuffle, whose total-variation behavior was
determined sharply by Bayer and Diaconis \cite{BayerDiaconis1992}, the Thorp
shuffle resisted analysis for decades.

For $n=2^d$, the first polylogarithmic full-deck upper bound was obtained by
Morris: $O(d^{44})$ \cite{Morris2008}.  Montenegro and Tetali sharpened this to
$O(d^{29})$ using spectral-profile/evolving-set methods
\cite{MontenegroTetali2006}.  Morris subsequently proved an $O((\log n)^4)$
bound for arbitrary even $n$ \cite{Morris2009AOP}, and then, for $n=2^d$,
reduced the full-deck bound to $O(d^3)$ \cite{Morris2013}.  In a different
direction, Morris, Rogaway and Stegers obtained strong estimates for projected
Thorp shuffles, showing that very large proper subsets of the cards mix much
faster than the whole deck \cite{MorrisRogawayStegers2009}.  Czumaj and V\"ocking
later proved an $O((\log n)^2)$ statement for linear-size partial permutations;
that theorem does not by itself give an $O((\log n)^2)$ full-labelled mixing
bound \cite{CzumajVoecking2014}.  Recent work on permutation sampling still
cites the cubic Thorp bound as the full-deck benchmark
\cite{AlekseevEtAl2026}.  Thus, to the author's knowledge, the result below is
the first full-deck improvement in the exponent since the cubic bound.

For convenience, the history for powers of two is summarized in
Table~\ref{tab:history}.  The entries refer to full-deck mixing unless noted.
\begin{table}[ht]
\centering
\begin{tabular}{lll}
\toprule
Work & setting & upper bound / conclusion \\
\midrule
Morris \cite{Morris2008} & $n=2^d$ & $O(d^{44})$ \\
Montenegro--Tetali \cite{MontenegroTetali2006} & $n=2^d$ & $O(d^{29})$ \\
Morris \cite{Morris2009AOP} & even $n$ & $O((\log n)^4)$ \\
Morris \cite{Morris2013} & $n=2^d$ & $O(d^3)$ \\
Morris--Rogaway--Stegers \cite{MorrisRogawayStegers2009} & projections & rapid partial mixing \\
Czumaj--V\"ocking \cite{CzumajVoecking2014} & linear-size partial permutations & $O((\log n)^2)$ \\
This paper & $n=2^d$ & $C_*d^{5/3}+O_\varepsilon(d^{4/3}\log d)$ \\
\bottomrule
\end{tabular}
\caption{Selected milestones for the Thorp shuffle.}
\label{tab:history}
\end{table}

There is a simple information-theoretic lower bound of order $d$.  One ordinary
Thorp shuffle uses exactly $n/2$ independent fair bits, so after $r$ shuffles a
fixed initial deck can have support of size at most $2^{rn/2}$.  If its law is
within total variation $\varepsilon<1$ of uniform on $S_n$, its support must
have size at least $(1-\varepsilon)n!$.  Hence
\begin{equation}\label{eq:support-lower}
 r\ge \frac{2}{n}\log_2((1-\varepsilon)n!)
   =2d-2\log_2 e+o(1).
\end{equation}
Thus a substantial gap remains between the linear lower bound and the
$d^{5/3}$ upper bound proved here.

\subsection{Main theorem}
Set
\[
 L=\log 2,\qquad A=\frac1{2\log(3/2)},\qquad
 z_* = \frac{17+\sqrt{97}}8,
\]
and
\begin{equation}\label{eq:Cstar-intro}
 C_*=
 \left(2+\frac1{z_*-2}\right)A^{1/3}(z_*L)^{2/3}
 =5.153695145414931\ldots .
\end{equation}
All logarithms are natural unless a base is displayed explicitly.

\begin{theorem}[Main theorem]\label{thm:main-intro}
Let $n=2^d$, and let $t_{\mix}(\varepsilon)$ denote the worst-start
$\varepsilon$-mixing time in total variation, measured in ordinary Thorp
shuffles.  For every fixed $0<\varepsilon<1$,
\begin{equation}\label{eq:main-intro}
 t_{\mix}(\varepsilon)
 \le C_*d^{5/3}+O_\varepsilon\!\left(d^{4/3}\log(d+2)\right).
\end{equation}
\end{theorem}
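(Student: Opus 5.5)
The plan is to follow the five-ingredient architecture described in the abstract, with everything measured in sweeps and converted back at the end. First pass to the rotating frame, where $d$ physical Thorp shuffles equal one sweep: an independent random swap layer on each of the $d$ coordinate matchings of $\{0,1\}^d$. A time of $s$ sweeps is $sd$ ordinary shuffles. So the target bound \eqref{eq:main-intro} becomes a bound of order $d^{2/3}$ sweeps, namely $C_*d^{2/3}+O_\varepsilon(d^{1/3}\log d)$ sweeps. The worst start is handled by translation invariance: the law after $t$ steps from any start is a right translate of the law from the identity, so it suffices to start from the identity.

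The run is split into three consecutive blocks of lengths $s_1,s_2,s_3$ sweeps.

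\textbf{Blocks 1 and 2 (entropy preparation).} For each of the two long blocks I would prove a marked-card estimate. Via the collision-graph representation and strong-Rayleigh negative dependence, the joint law of the positions of any $h$ ordered marked cards should be pointwise within a factor $1+\delta$ of uniform on injective $h$-tuples. The sharpened entropy argument, moving the endpoint moment to $\beta_c=\log_2(3/2)$, should show that this happens once $s\ge A h^2 L^2$ up to lower order. More precisely, the clock $s^2/(2\log(3/2))+O(s\log s)$ controls how large an $h$ a block of length $s$ prepares.

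I would then apply the exact multiplicities in the $h$-tuple permutation module $M^{(n-h,1^h)}$, which contains every irreducible $\rho$ with $\lambda_1\ge n-h$. This converts the marked-card bound into operator-norm and energy bounds $\|\hat\mu(\rho)\|$ on all low levels $\lambda_1\ge n-h$.

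\textbf{Entropy truncation.} Next, truncate: write the block law as a positive mixture $(1-\eta)\mu_{\mathrm{reg}}+\eta\mu_{\mathrm{bad}}$, where the regular part has bounded relative entropy density and still satisfies the marked-card budget. For high levels, the standard dimension estimate $\sum_{\lambda_1<n-h} d_\lambda^2\cdots$ combined with the entropy bound on $\mu_{\mathrm{reg}}$ controls the Plancherel sum. Taking the convolution of the two regularized blocks then makes the high-level part of $\|\mu-U\|_2^2$ small.

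\textbf{Block 3 and optimization.} The shorter third block only adds extra decay on the low levels, bounding $\sum_{\lambda_1\ge n-h} d_\lambda\,\mathrm{tr}(\cdots)$. Combining the pieces through Cauchy--Schwarz in the $L^2$-to-TV bound gives
\[
\|P^t-U\|_{\TV}\le \eta+\tfrac12\Bigl(\textstyle\sum_{\rho\ne\mathrm{triv}} d_\rho\|\hat\mu(\rho)\|_{\HS}^2\Bigr)^{1/2}.
\]
Finally, optimize over $s_1,s_2,s_3$ and the truncation level $h\approx z\,(\cdot)d^{1/3}$. The cost functional I expect is
\[
(2+1/(z-2))\,A^{1/3}(zL)^{2/3}d^{2/3},
\]
whose minimizer $z_*$ solves a quadratic, consistent with $z_*=(17+\sqrt{97})/8$. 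Multiplying by $d$ gives $C_*d^{5/3}$, and the $O(s\log s)$ terms contribute the error term.

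\textbf{Main obstacle.} I expect the hardest step to be the entropy-truncation step. The difficulty is ensuring that the regular component of the mixture simultaneously keeps the pointwise marked-card bound, which is needed for the low levels, and has entropy small enough to beat the high-level dimension count. My first attempt would be to truncate on the density exceeding $e^{K}$ and use the critical-moment bound to control the removed mass $\eta$.
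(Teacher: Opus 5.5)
Your outline has the paper's architecture: rotating frame, two truncated long blocks for the high levels, one untruncated short block for extra low-level contraction, tuple amplification, and $L^2$-to-TV. However, it misattributes the mechanism that produces $d^{2/3}$ sweeps, so the cost functional you quote is asserted rather than derived. The entropy clock does not prepare marked cards, and no relation like $s\ge Ah^2L^2$ is available.

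In the paper a long block must meet two independent requirements.
\begin{enumerate}
\item The clock gives $\Ent(q_t)\le B=\eta ne^{-s}$ once $t\ge R_*(B)\approx As^2$. This is used only for truncation at $\log M=(B+1/e)/\eta$. The removed mass is bounded by $a\log M\le\Ent(F)+1/e$; the critical endpoint moment enters only the clock, not the truncation. The output is the $L^2$ bound $\log\norm f_2\le H$, which is what the high-level dimension estimate consumes.
\item The marked-card input is the separate collision-graph/interpolation bound $N_hP_h^t\le\exp(\alpha_{d,m}h^2)$ for $t\ge 2m$, with $\alpha_{d,m}=\frac{8Ld}{n}(2n)^{1/[2(m-1)]}$. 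Its depth $m$ is a second time resource: making $\rho=16e^3n\alpha\sqrt{\beta_0K}<1$ at $K\approx ne^{-s}$ forces $m\gtrsim d/s$.
\end{enumerate}
Hence $t=\max\{R_*(B),2m_a\}$, and it is the balance of $As^2$ against $z_*Ld/s$ that gives $s\asymp d^{1/3}$. The parameter $z$ is not a truncation level. It splits the smoothing exponent $\frac{(d+1)L}{2(m-1)}$ between the long blocks ($v/z$) and the short block ($(z-2)v/z$). Without this $d/s$ term your optimization has nothing to balance.

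Two further steps fail as stated.

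First, a single order $h$ with a $(1+\delta)$ pointwise bound cannot cover the low levels. The tuple module only sees $\lambda_1\ge n-h$. The high-level term $e^{4H}(eK/n)^K$ is small only if the cutoff $K$ is at least of order $H/s$ with $H\approx ne^{-s}$. That is far beyond the range $h\lesssim\alpha^{-1/2}\lesssim\sqrt{n/d}$ in which $e^{\alpha h^2}$ is close to $1$. The paper instead takes $h_k=\lfloor\sqrt{k/\alpha}\rfloor$ separately for each level $k\le K$ and accepts $\norm{P_{h_k}^f}_{\HS}^2\le p^{-1}e^{k}$. It beats this loss with $\binom{h_k}{k}\ge(2\sqrt{\alpha k})^{-k}$. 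Because the energy $\norm{E_kf}_2^2$ is then as large as $p^{-1}(4en\sqrt{\alpha/k})^k$, the three-factor product $\norm{T_G}_k^2\norm{T_f}_k^2\norm{E_kf}_2^2$ is genuinely needed.

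Second, splitting levels by $\lambda_1$ alone puts near-column irreducibles such as $(2,1^{n-2})$, of dimension $n-1$, into your ``high'' set, where dimension growth gives nothing. The paper uses the two-sided level $\min(n-\lambda_1,n-\lambda'_1)$ and controls the column wing by the sign-twisted tuple module. It removes the sign character because $G=q_u$ contains a fair-switch sweep.
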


The proof is quantitative throughout.  The value $C_*$ is not asserted to be
optimal for the shuffle.  It is the exact minimizer within the asymmetric
three-block schedule used in Section~\ref{sec:three-block}.

\subsection{From the physical shuffle to the hypercube, step by step}
\label{subsec:clock}
This subsection spells out the basic geometric model in detail.  Nothing from
representation theory or entropy is needed here.

\paragraph{The physical procedure.}
Assume $n=2^d$.  Split the deck into two packets of $n/2$ cards, keeping the
order inside each packet.  Pair the cards having the same depth in the two
packets.  For every pair toss an independent fair coin, and use that coin to
decide which member of the pair occupies the earlier of the two corresponding
output positions.  This is one Thorp shuffle.  Thus there are $n/2$ independent
coin tosses in one shuffle.

The binary model records exactly the same operation.  The binary strings label
\emph{positions}, not card identities.  Label the $n$ positions by the vertices
of the cube
\[
 Q_d=\Ftwo^d=\{0,1\}^d.
\]
Write a vertex as $(b,x)$, where $b\in\Ftwo$ is the first bit and
$x\in\Ftwo^{d-1}$ contains the remaining $d-1$ bits.  The two packets are
$b=0$ and $b=1$.  Positions $(0,x)$ and $(1,x)$ form one pair.
For each $x$, let $\xi_x\in\{0,1\}$ be the fair coin assigned to that pair.
Before any deterministic relabelling, the random part of the shuffle is
\begin{equation}\label{eq:first-layer}
 L_\xi(b,x)=(b\oplus\xi_x,x),
\end{equation}
where $\oplus$ is addition modulo two.  If $\xi_x=0$ the two positions keep
their order; if $\xi_x=1$ they are exchanged.  Hence $L_\xi$ is simply a
collection of independent random swaps on all cube edges parallel to the first
coordinate.

The physical output order moves the packet bit from the front of the binary
address to the back.  Define the deterministic cyclic rotation
\begin{equation}\label{eq:rotation}
 R(b_1,b_2,\ldots,b_d)=(b_2,\ldots,b_d,b_1).
\end{equation}
One physical Thorp shuffle is therefore the position permutation
\begin{equation}\label{eq:physical-factorization}
 T_\xi=R\circ L_\xi,
 \qquad
 (b,x)\longmapsto (x,b\oplus\xi_x).
\end{equation}
Equation \eqref{eq:physical-factorization} is not a different shuffle; it is
just the usual two-packet procedure written in binary coordinates.

\paragraph{A concrete eight-card example.}
Take $d=3$.  The eight positions are
\[
 000,\ 001,\ 010,\ 011,\ 100,\ 101,\ 110,\ 111.
\]
The first random layer pairs
\[
 000\leftrightarrow100,\quad
 001\leftrightarrow101,\quad
 010\leftrightarrow110,\quad
 011\leftrightarrow111.
\]
Consider the card currently at $101=(1,01)$.  Its pair coin is $\xi_{01}$.
If $\xi_{01}=0$, the random layer leaves the card at $101$ and the rotation
sends it to $011$.  If $\xi_{01}=1$, the random layer moves it to $001$ and
the rotation sends it to $010$.  Thus the compact formula
$(b,x)\mapsto(x,b\oplus\xi_x)$ literally describes the physical operation.

\paragraph{Removing the deterministic rotation.}
The rotation $R$ does not create or destroy randomness.  It only renames the
positions, so applying a deterministic power of $R$ to every configuration
preserves total-variation distance from the uniform distribution.  We may
therefore observe the chain in a \emph{rotating frame}.

Let $L^{(i)}$ denote a fresh random swap layer on all edges parallel to
coordinate $i$ of the cube; the swaps on different edges are independent.
If the first physical shuffle uses $L_{\xi_1}$, the second $L_{\xi_2}$, and so
on, then after $r$ physical shuffles
\begin{align}\label{eq:rotating-frame-product}
 R^{-r}T_{\xi_r}\cdots T_{\xi_1}
 &=(R^{-(r-1)}L_{\xi_r}R^{r-1})\cdots
   (R^{-1}L_{\xi_2}R)L_{\xi_1}.
\end{align}
Conjugating by $R$ merely changes which coordinate is called ``first.''
Consequently the factors on the right of \eqref{eq:rotating-frame-product}
are independent random swap layers whose coordinate directions cycle through
$1,2,\ldots,d,1,2,\ldots$.

\begin{definition}[Coordinate layer and sweep]\label{def:sweep}
A \emph{coordinate-$i$ layer} independently swaps, with probability $1/2$,
the two endpoints of every cube edge parallel to coordinate $i$.  A
\emph{sweep} is one coordinate layer in each of the $d$ directions, in a fixed
cyclic order.
\end{definition}

Since $R^d$ is the identity, a block of exactly $d$ physical Thorp shuffles is
not merely comparable to a sweep: after the rotating-frame cancellation it is
exactly one sweep, with independent fresh switch bits in all $d$ coordinate
layers.  Thus
\begin{equation}\label{eq:clock-conversion}
 \boxed{\text{one sweep}=d\text{ ordinary Thorp shuffles}.}
\end{equation}
All analytic estimates below are stated in sweeps.  The last step of the proof
multiplies the sweep count by $d$ to return to the physical clock.

\paragraph{Why this viewpoint is useful.}
In the original two-packet description it is difficult to see how several
labelled cards can interact over many shuffles.  In the cube, each layer is a
perfect matching and a card can only cross one edge of the currently active
coordinate.  Two marked cards interact only when they arrive at the two ends
of the same active edge.  This makes their interactions countable by a
\emph{collision graph}.  It also makes the one-card behavior transparent:
during a sweep each coordinate of a given card is refreshed by a fair choice
exactly once.  Conditional on the card's entire path up to that layer, the
switch bit on the edge it now occupies is still a fresh independent fair bit.
Inducting over the $d$ coordinate layers shows that the final $d$ bits are
independent fair bits.  Thus one marked card is exactly uniform on $Q_d$ after
one sweep.  The difficulty of full-deck mixing is therefore not making one
card random; it is destroying the residual dependence among many cards.

For the eight-card example, the rotating-frame directions used by the first
three physical shuffles are coordinate $1$, then coordinate $2$, then
coordinate $3$.  Those three layers form one sweep.  The fourth physical
shuffle starts the next sweep again at coordinate $1$.

\section{Notation and elementary facts}\label{sec:notation}

A deck arrangement is a permutation of the $n$ labels among the $n$ cube
vertices.  We identify the state space with the symmetric group $S_n$; our
composition convention is the one encoded by the convolution formula below.
The particular convention is immaterial for mixing because inversion and
left/right translation preserve the uniform distribution.

Let $U$ denote uniform probability on $S_n$.  If $\nu$ is a probability law on
$S_n$, its density with respect to $U$ is the function
\[
 f(\sigma)=\frac{\nu(\sigma)}{U(\sigma)}=n!\,\nu(\sigma).
\]
Thus uniformity is represented by the constant function $f\equiv1$, and
$\E_U f=1$.  The advantage of this normalization is that distances from
uniformity become ordinary norms of $f-1$.

For densities $f,g$ we use normalized convolution
\[
 (f*g)(x)=\E_{y\sim U} f(xy^{-1})g(y),
\]
so that convolution of densities corresponds to composing independent random
permutations.  If $q_r$ is the density of the walk after $r$ complete sweeps
started from the identity, then $q_r*q_s=q_{r+s}$.  Thus an expression such as
$q_u*q_t*q_t$ is the law of three genuine consecutive blocks whose total
length is $2t+u$ sweeps.

For a density $f$ define
\[
 \Ent(f)=\E_U[f\log f],\qquad
 \norm f_p=(\E_U|f|^p)^{1/p}.
\]
For a random variable taking values in a finite set equipped with a specified
uniform law, $\Ent(X)$ means the relative entropy of its distribution with
respect to that uniform law.  For distributions $p,q$ on the same finite set,
$D(p\Vert q)=\sum_xp(x)\log(p(x)/q(x))$ with the usual conventions.
We write $I(X;Y\mid Z)$ for conditional mutual information.

For two probability measures $\nu,\pi$ on a finite set,
\[
 \norm{\nu-\pi}_{\TV}=\frac12\sum_x|\nu(x)-\pi(x)|.
\]
If $f$ is the density of $\nu$ relative to $\pi$, then Cauchy--Schwarz gives
\begin{equation}\label{eq:TV-L2}
 \norm{\nu-\pi}_{\TV}\le\frac12\norm{f-1}_2.
\end{equation}
If $X_r^{\rm phys}$ denotes the deck after $r$ ordinary Thorp shuffles, define
\[
 t_{\mix}(\varepsilon)=
 \min\left\{r:\max_{\sigma\in S_n}
 \norm{\mathcal L_\sigma(X_r^{\rm phys})-U}_{\TV}\le\varepsilon\right\}.
\]
Here $\mathcal L_\sigma$ means that the chain starts from arrangement $\sigma$.
Left or right translation on $S_n$ preserves both $U$ and total variation, so
it is enough to analyze the walk started from the identity.  We also write
$\log^+x=\max\{\log x,0\}$.

For ordered distinct $h$-tuples of vertices let
\[
 \Omega_h=\{(x_1,\ldots,x_h)\in(\Ftwo^d)^h:x_i\ne x_j\ (i\ne j)\},
 \qquad N_h=(n)_h.
\]
When a permutation law has density $f$, $P_h^f$ denotes its stochastic
transition matrix on $\Omega_h$.  For one sweep we abbreviate this matrix to
$P_h$.  Hilbert--Schmidt norms of finite matrices are the unnormalized sums
\[
 \norm M_{\HS}^2=\sum_{x,y}|M(x,y)|^2.
\]
This convention is important in Section~\ref{sec:tuple-amplification}.

The proof also uses the standard representation theory of $S_n$.  A
\emph{partition} $\lambda\vdash n$ is a nonincreasing list
$\lambda_1\ge\lambda_2\ge\cdots$ of nonnegative integers summing to $n$.
Its Young diagram has $\lambda_i$ boxes in row $i$; the conjugate partition
$\lambda'$ is obtained by reflecting that diagram, so $\lambda'_1$ is the
first column length.  Irreducible representations of $S_n$ are indexed by
partitions.  We denote the representation by $V_\lambda$ and its dimension by
$f^\lambda=\dim V_\lambda$.

For this paper the important statistic is the \emph{level}
\begin{equation}\label{eq:level-intro}
 \ell(\lambda)=\min\{n-\lambda_1,n-\lambda'_1\}.
\end{equation}
A small level means that the Young diagram is close either to one long row or
to one long column.  Informally, these are the Fourier modes that can already
be detected by following only a small number of labelled cards.  Level zero
consists of the constant mode and the sign mode.  This intuition is made exact
in Section~\ref{sec:tuple-amplification}.

\subsection{A short dictionary for first-time readers}
The proof moves among several languages.  The following dictionary may be
useful on a first reading.

\begin{center}
\begin{tabular}{p{0.23\textwidth}p{0.69\textwidth}}
\toprule
Object & Meaning in this proof \\
\midrule
$\Ent(f)$ & How much information about the starting arrangement remains beyond a uniform deck.  It is zero exactly at uniformity. \\
$h$ marked cards & A test that ignores all but $h$ specified labels.  Small-$h$ tests are much easier to analyze geometrically. \\
$P_h$ & The transition matrix seen by those $h$ ordered labels during one sweep. \\
Representation level $k$ & A Fourier scale on $S_n$.  Low levels are controlled by ordered tuples; high levels are controlled using dimension growth. \\
Truncation $F=pf+(1-p)b$ & An exact decomposition of a mildly nonuniform block into a large ``regular'' part $f$ and a rare exceptional part $b$. \\
Long/short blocks & Actual consecutive pieces of the shuffle.  Two long blocks pay for entropy regularization; the third only supplies extra low-level contraction. \\
\bottomrule
\end{tabular}
\end{center}

No step treats these as metaphors: each row is converted below into a precise
inequality.  The table only indicates why that inequality is being proved.

\subsection{Two external inputs}\label{subsec:external}
Everything specific to the Thorp shuffle is proved below.  We use two standard
external facts.

\begin{enumerate}[label=(E\arabic*)]
\item \textbf{Strong Rayleigh negative dependence.}
The only consequence ultimately needed is intuitive: after independently
randomizing by transpositions, occupation of one vertex cannot make occupation
of a disjoint collection of vertices more likely.  In particular, if all
one-vertex occupation probabilities are at most $\theta$, then negative
association gives
\[
 \Prb(B\text{ is entirely occupied})\le\theta^{|B|}.
\]
For completeness, the standard formal language yielding this fact is as
follows.  For a probability measure $\mu$ on subsets of a finite ground set
$V$, its multiaffine generating polynomial is
\[
 Z_\mu(z)=\sum_{A\subseteq V}\mu(A)\prod_{v\in A}z_v.
\]
A real multivariate polynomial is called \emph{stable} if it is nonzero
whenever every variable has strictly positive imaginary part; $\mu$ is
\emph{strongly Rayleigh} when $Z_\mu$ is stable.  If $\tau$ is a
transposition of two ground-set elements, the operation
$\mu\mapsto(1-\theta)\mu+\theta\,\tau\mu$, $0\le\theta\le1$, is
called partial symmetrization.  Partial symmetrization preserves the strongly
Rayleigh property, and strongly Rayleigh measures are negatively associated:
for increasing functions depending on disjoint sets of occupation indicators,
their covariance is nonpositive.  These are Theorems 4.20 and 4.9 of
Borcea--Br\"and\'en--Liggett \cite{BorceaBrandenLiggett2009}.

\item \textbf{High representation levels.}
This input says that once $f$ has controlled $L^2$ norm, Fourier modes of very
high level are automatically tiny because the corresponding representations
have large dimension.  Formally, for a probability density $f$ on $S_n$ and
$1\le K<n/10$, if $T_f$ is
convolution by $f$ and $T_f|_{>K}$ denotes its restriction to the sum of
isotypic components of level $>K$, then
\begin{equation}\label{eq:external-high-level}
 \norm{T_f}_{>K}\le \norm f_2\left(\frac{eK}{n}\right)^{K/2}.
\end{equation}
This follows from the dimension estimate used in the proof of
Keevash--Lifshitz \cite[Theorem 4.4 and Lemma 4.5]{KeevashLifshitz2023}; only
the dimension part is used here, not their globalness hypothesis.
\end{enumerate}

\section{Overview of the proof}\label{sec:overview}

\subsection*{The central difficulty}
One card becomes perfectly uniform after a single sweep, but the whole deck is
far from uniform because the cards share switches and therefore remain highly
dependent.  The proof has to pass from ``a few cards look random'' to ``the
entire permutation looks random.''  The five modules below perform that
passage.

It helps to separate two tasks.  The \emph{geometric task} is to understand a
small set of marked cards moving through the cube.  The \emph{global task} is
to convert those small-set estimates into a statement about all $n!$ deck
arrangements.  Collision graphs solve the first task.  Entropy, truncation,
and representation theory solve the second.

\begin{center}
\begin{tabular}{p{0.24\textwidth}p{0.31\textwidth}p{0.35\textwidth}}
\toprule
Question & Tool & Output used later \\
\midrule
How can $h$ specified cards fail to separate? & Collision graph on the cube & Pointwise bound on the $h$-card transition kernel \\
How quickly does a whole deck forget information? & One-sweep entropy loss & A time after which the deck density is not too spiky on average \\
How do $h$-card estimates control full-deck Fourier modes? & Ordered-tuple representations & Low-level operator and energy bounds \\
What about rare very large density values and high Fourier levels? & Entropy truncation plus dimension growth & A large regular component with controlled $L^2$ norm \\
How should the actual shuffle time be allocated? & Two long blocks plus one shorter block & The optimized $d^{5/3}$ bound \\
\bottomrule
\end{tabular}
\end{center}

\paragraph{1. Marked-card smoothing from collision graphs.}
Fix $h$ labelled cards and temporarily ignore all the others.  If their
initial and final cube vertices are prescribed, then every marked path through
a sweep is forced.  Usually the $h$ cards ask for $hd$ independent switch
bits.  Whenever two marked cards occupy the two ends of the same active cube
edge, however, they ask for the \emph{same} switch bit.  Such an event is a
collision and saves one independent condition.  This gives the exact routing
weight
\[
 P_h(x,y)=n^{-h}2^{C(x,y)}
\]
for a feasible route, where $C(x,y)$ is its collision count.

For a fixed future sweep, pull every active matching edge back to the input
vertices.  The resulting collision graph $G$ is a simple $d$-regular graph,
and the number of collisions made by an occupied marked set $A$ is exactly
$e_G(A)$.  Its geometry satisfies the deterministic isoperimetric bound
\[
 e_G(B)\le \frac{|B|}{2}\log_2|B|.
\]
After one independent preliminary sweep, negative dependence gives
$\Pr(B\subseteq A)\le(h/n)^{|B|}$.  Expanding over connected edge sets then
controls the exponential moment of $e_G(A)$.  Interpolating row and column
$L^p$ bounds converts that moment estimate into the pointwise smoothing bound
\[
 N_hP_h^t(x,y)\le e^{\alpha h^2}.
\]
Here $N_h=(n)_h$ is the number of ordered distinct $h$-tuples.  Since uniform
motion on ordered tuples would have kernel $1/N_h$, the normalized quantity
$N_hP_h^t(x,y)$ is exactly the pointwise likelihood ratio relative to uniform.
Thus $e^{\alpha h^2}$ close to one means that every prescribed $h$-card
endpoint is nearly as likely as under uniformity.

\paragraph{2. A sharper entropy clock.}
Marked-card smoothing is useful only after the full-deck law is prevented from
putting too much mass on a tiny exceptional set of permutations.  Relative
entropy measures this global spikiness.  For an arbitrary input arrangement
$\mu$, expose one fresh sweep backwards in input order.  At each input $j$ the
unexposed endpoint is uniform on an explicit candidate set.  A symmetric
partner construction produces a random earlier input whose index is uniform
on $\{0,\ldots,j-1\}$ after averaging over the fresh sweep.

The construction yields two different lower bounds for the entropy loss
$\Delta$ in one sweep.  One detects square-root/Hellinger separation and the
other detects relative entropy from an averaged partner law.  They are
\emph{separate} lower bounds; the proof never adds them and therefore never
spends the same entropy loss twice.  The deterministic partner weights
$W_{jk}$ have the critical moment exponent
\[
 \beta_c=\log_2(3/2).
\]
Working just below this exponent and using an adjustable scalar interpolation
inequality gives the preparation time
\[
 R_*(\eta ne^{-s})=
 \frac{s^2}{2\log(3/2)}
 +O_\eta(s\log(s+2)+\log(d+2))
\]
in sweeps.

\paragraph{3. Ordered-tuple multiplicity amplification.}
This is the bridge from a few marked cards to the entire permutation.  Fourier
analysis on $S_n$ decomposes an arbitrary error from uniformity into
orthogonal representation levels.  A low-level representation occurs many
times inside the permutation action on ordered $h$-tuples.  For
$\lambda=(n-k,\mu)$ its exact multiplicity is
\[
 m_h(\lambda)=\binom hk f^\mu,
\]
while
$f^\lambda\le\binom nk f^\mu$.  Therefore a Hilbert--Schmidt budget for the
marked-card matrix forces small Fourier operators and energies on every low
level.  A sign-twisted tuple module gives the symmetric statement for
partitions close to a long column.  The use of \emph{ordered} tuples is
essential for this exact multiplicity.

\paragraph{4. Entropy truncation.}
Small entropy does not imply a small $L^2$ norm: a density can still have very
rare spikes.  We therefore cut off the high-density tail.  This is done as an
exact probability decomposition, not by changing the Markov chain:
\[
 F=pf+(1-p)b,\qquad p\ge1-\eta.
\]
The regular part $f$ has controlled $L^2$ norm and obeys the pointwise
domination $f\le F/p$.  Hence every marked-card estimate already proved for
the actual block $F$ passes to $f$ at the cost of only the factor $p^{-1}$.
The exceptional component has total mass at most $\eta$ and will simply be
paid for in total variation.

\paragraph{5. Two long regular blocks and one short actual block.}
Two independent copies of the regular component $f$ are enough to suppress
high representation levels.  A third block is needed only to contract the
remaining low levels, so that block need not pay the entropy-preparation cost.
Let its actual density be $G$.  The regular part analyzed at the end is
\[
 g=G*f*f.
\]
If the long-block marked parameter is $\alpha$ and the short-block parameter
is $\beta_0$, the low-level geometric ratio is essentially
$n\alpha\sqrt{\beta_0K}$.  High levels are small because the two $f$ factors
have controlled $L^2$ norm and high representation dimensions are large.

The scale of the final exponent can already be guessed before optimizing
constants.  To reach entropy roughly $ne^{-s}$ costs order $s^2$ sweeps,
whereas the marked-card smoothing needed at that scale costs order $d/s$
sweeps.  Balancing
\[
 s^2\asymp d/s
\]
gives $s\asymp d^{1/3}$ and a sweep count of order $d^{2/3}$.  Since one sweep
is $d$ physical Thorp shuffles by \eqref{eq:clock-conversion}, the physical
time is of order $d^{5/3}$.  The quantitative optimization in
Section~\ref{sec:three-block} and the following section produces
$z_*=(17+\sqrt{97})/8$ and the explicit constant $C_*$.

The next sections carry this program out quantitatively.  The longer
shuffle-specific proofs are included after the main argument so that every
asserted input can be reconstructed from this manuscript.

\section{Quantitative inputs used in the proof}
The first input is the precise marked-card statement produced by the collision
analysis.  Keep the following interpretation in mind.  Under the uniform law
on ordered $h$-tuples, each endpoint has probability $1/N_h$.  Hence
$N_hP_h^t(x,y)$ is the ratio
\[
 \frac{\Pr\{\text{the $h$ marked cards end at }y\mid x\}}
      {\Pr\{\text{the same endpoint under a uniform permutation}\}}.
\]
A bound by $e^{\alpha h^2}$ says that correlations among the $h$ marked cards
can inflate any endpoint probability by at most that factor.

For ordered distinct $h$-tuples, let $N_h=(n)_h$ and $P_h$ be the
one-sweep transition matrix. For an integer $m\ge2$, define
\begin{equation}\label{eq:alpha}
 \alpha_{d,m}={8Ld\over n}(2n)^{1/[2(m-1)]}.
\end{equation}
\begin{corollary}\label{cor:quadratic}
If $\alpha_{d,m}h\le m-1$, then, for every integer $t\ge2m$,
\begin{equation}\label{eq:quadratic-budget}
 N_hP_h^t(x,y)\le \exp(\alpha_{d,m}h^2).
\end{equation}
The same assertion holds for reverse-coordinate sweeps.
\end{corollary}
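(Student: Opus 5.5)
The plan is to reduce to $t=2m$, split those $2m$ sweeps into a preparation sweep followed by $2(m-1)$ sweeps to which an $L^p$ interpolation is applied, and then check that the hypothesis $\alpha_{d,m}h\le m-1$ is exactly what makes the resulting collision sum sum to at most $\exp(\alpha_{d,m}h^2)$.

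\emph{Reduction to $t=2m$.} Set $M_t=\max_{x,y}N_hP_h^t(x,y)$. Since $P_h$ is stochastic,
\[
N_hP_h^{t+1}(x,y)=\sum_z P_h(x,z)\,N_hP_h^t(z,y)\le M_t .
\]
Hence $M_t$ is nonincreasing in $t$, and it suffices to prove \eqref{eq:quadratic-budget} for $t=2m$.

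\emph{Splitting the block.} I would write $P_h^{2m}=P_h\cdot P_h^{2m-1}$. The first sweep is spent on preparation. After it, the occupied marked set $A$ is obtained by independent transposition randomizations of a deterministic set, so (E1) gives
\[
\Pr(B\subseteq A)\le (h/n)^{|B|}
\]
for every vertex set $B$. For each of the remaining sweeps I would use the exact routing formula $P_h(x,y)=n^{-h}2^{C(x,y)}$, where the collision count $C$ equals $e_G(A)$ for the pulled-back $d$-regular collision graph $G$ of that sweep.

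\emph{Interpolation.} I would bound $N_hP_h^{2m}(x,y)$ by a product of row $L^{p}$ norms and column $L^{p'}$ norms of the one-sweep kernels, using H\"older/Riesz--Thorin with $p$ tied to $m-1$. The exponent $1/[2(m-1)]$ comes from this step. A trivial row-sum bound gives a factor $(2n)$ from the count of feasible routes, and this factor is split evenly over the $2(m-1)$ interpolated sweeps. What remains is an exponential moment of the form
\[
\Bigl(\E\, \lambda^{e_G(A)}\Bigr)^{1/(m-1)},\qquad \lambda=2^{m-1},
\]
taken over the negatively dependent $A$. These interpolation and moment steps are the general estimate developed in the collision-graph section, and I take them as given.

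\emph{Where the hypothesis enters.} Expand $\lambda^{e_G(A)}=\prod_{e}(1+(\lambda-1)\ind_{e\subseteq A})$ over connected edge sets $S$ of $G$. For each such $S$, use $\Pr(V(S)\subseteq A)\le (h/n)^{|V(S)|}$ together with the isoperimetric bound $e_G(B)\le\frac{|B|}{2}\log_2|B|$, which controls $|S|$ in terms of $|V(S)|$. There are at most $d^{b}$ connected sets with $b$ vertices per root, and there are $h$ choices of root among the marked cards. After taking the $(m-1)$-th root, the exponent in the moment is bounded by
\[
h\sum_{b\ge 2}\Bigl(\frac{8Ld\,h}{n}(2n)^{1/[2(m-1)]}\cdot\frac1{m-1}\Bigr)^{b-1}\cdot(m-1).
\]
Here $8L$ absorbs the factors $\log 2$ from $\lambda=2^{m-1}$ and from the isoperimetric exponent, together with the constants in the connected-set count. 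The common ratio of this series is $\alpha_{d,m}h/(m-1)$, so the hypothesis $\alpha_{d,m}h\le m-1$ makes it at most $1$. Under that condition the sum is dominated by its first term plus a geometric tail of the same order, giving an exponent at most $\alpha_{d,m}h^2$. Exponentiating then yields \eqref{eq:quadratic-budget}.

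\emph{Main obstacle.} The delicate step is this bookkeeping of constants. The $(2n)^{1/[2(m-1)]}$ factor from interpolation and the $\log 2$ factors must combine into exactly the $8Ld/n$ prefactor of \eqref{eq:alpha}. The tail must also be controlled by the first term using only $\alpha_{d,m}h\le m-1$, with no extra slack. If the cluster sum does not close cleanly, my first fallback would be to bound the tail crudely via $1+x\le e^{x}$, and absorb the resulting factor $2$ into the constant $8$.

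\emph{Reverse-coordinate sweeps.} Reversing the coordinate order is conjugation by a coordinate permutation of $Q_d$, which is a cube automorphism. It preserves the collision-graph structure ($d$-regularity and the isoperimetric bound), the routing formula, and negative dependence. The same argument therefore applies verbatim.
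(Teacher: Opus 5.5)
Your reduction to $t=2m$ (the maximum of $N_hP_h^t$ is nonincreasing in $t$) is exactly the paper's. Your reverse-order argument, conjugating by the coordinate-reversing automorphism of $Q_d$, is a clean way to get the last sentence.

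The gap is in the one computation this corollary actually consists of: checking that the parameters of Theorem~\ref{thm:smoothing} close with the \emph{fixed} constant $8L$ of \eqref{eq:alpha}. Your own displayed series does not close. Its $b=2$ term is already $h\cdot\frac{\alpha_{d,m}h}{m-1}\cdot(m-1)=\alpha_{d,m}h^2$, so any positive tail pushes the exponent above $\alpha_{d,m}h^2$. Moreover, your hypothesis only makes the ratio at most $1$, so the tail need not even be comparable to the first term. The fallback of absorbing a factor $2$ into the $8$ is not available, because $\alpha_{d,m}$ is fixed by \eqref{eq:alpha}. Doing so would prove a weaker statement.

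The bookkeeping fails because the ingredients are misidentified.
\begin{itemize}
\item In Theorem~\ref{thm:smoothing}, the two-sweep operator $K=P_h^2$ is iterated $m$ times. Each factor carries its own preparation sweep, and the column bound comes from $(P_h^*)^2$. The exponent is $p=m/(m-1)$, so the collision moment is $\E\,2^{(p-1)e_G(A)}=\E\,2^{e_G(A)/(m-1)}$, not $(\E\,2^{(m-1)e_G(A)})^{1/(m-1)}$. With $1+t=2^{m-1}$, isoperimetry only gives $(1+t)^{e_G(B)}\le v^{(m-1)v/2}$, a factor of order $(2h)^{(m-1)/2}$ per tree edge. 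Your series cannot be derived from that moment.
\item The factor $(2n)^{1/[2(m-1)]}$ is not a route count spread over sweeps. It comes from $2^{e_G(B)/(m-1)}\le v^{v/[2(m-1)]}\le(2h)^{(v-1)/[2(m-1)]}$, followed by $h\le n$.
\item The $L$ comes from the tree-edge factor $t/(1+t)=1-2^{-1/(m-1)}$ together with $(m-1)(1-2^{-1/(m-1)})\le L$.
\item The enumeration must be over pairs (vertex set, spanning tree), of which there are at most $n(4d)^{v-1}$. The count ``at most $d^b$ connected sets per root'' is false in general.
\end{itemize}

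With these corrections, $a_{d,h,m}\le\frac{4Ldh}{(m-1)n}(2n)^{1/[2(m-1)]}=\frac{\alpha_{d,m}h}{2(m-1)}\le\frac12$. Hence $1/(1-a)\le2$, and the exponent satisfies $(m-1)ha/(1-a)\le2(m-1)ha\le\alpha_{d,m}h^2$. The $8$ in \eqref{eq:alpha} is exactly the plane-tree factor $4$ times this $2$. The hypothesis $\alpha_{d,m}h\le m-1$ is what forces the ratio to be at most $1/2$, not merely at most $1$.

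Finally, $h=1$ lies outside Theorem~\ref{thm:moment}, which requires $k\ge2$. That case must be handled separately by one-card uniformity.
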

\begin{proof}
The smoothing theorem in Appendix~\ref{sec:kernel} has
$a=4d(h/n)(1-2^{-1/(m-1)})(2h)^{1/[2(m-1)]}$ and upper exponent
$(m-1)ha/(1-a)$. Since
$(m-1)(1-2^{-1/(m-1)})\le L$, one has
$a\le\alpha_{d,m}h/[2(m-1)]\le1/2$. The exponent is therefore at most
$\alpha_{d,m}h^2$. Additional stochastic transitions preserve the maximum.
The $h=1$ case also follows directly from one-card uniformity.
\end{proof}
The coarse entropy profile proved in Appendix~\ref{sec:entropy} is
\begin{equation}\label{eq:coarse-profile}
 {E\over\Delta}\le100+12\log^+(n/E),\qquad E>0.
\end{equation}
We use this only to bootstrap a logarithm below, not for the leading
coefficient of the new clock.

\section{An entropy clock at the critical endpoint-moment threshold}
Before entering the calculation, we summarize the objects that appear in it.
Let $\mu$ be the current random deck arrangement and let one fresh independent
sweep act on it.  Write
\[
 E=\Ent(\mu),\qquad
 \Delta=\Ent(\mu)-\Ent(\text{deck after the fresh sweep}).
\]
Thus $E$ is the remaining information relative to a uniform deck and $\Delta$
is the amount erased by one sweep.

The detailed exposure argument in Appendix~\ref{sec:information} reveals
positions from high index to low index.  When $j+1$ labels remain unrevealed,
$p_j$ is the conditional distribution of the label occupying position $j$.
The partner construction supplies deterministic weights $W_{jk}$ and the
comparison distribution
\[
 q_j=\sum_{k\le j}W_{jk}p_k.
\]
Two aggregate discrepancies are used:
\[
 S=\sum_j\E\!\left[1-\frac1{j+1}
       \left(\sum_a\sqrt{p_j(a)}\right)^2\right],
 \qquad
 \mathcal C=\sum_j\E D(p_j\Vert q_j).
\]
The first is a square-root/Hellinger-type defect from conditional uniformity;
the second measures how distinguishable position $j$ is from its randomized
partner.  Appendix~\ref{sec:information} proves separately that
$\Delta\ge(\log2)S$ and $\Delta\ge\mathcal C$.

In this section the notation $p_j,q_j,W_{jk},S,\mathcal C$ is exactly that
of Appendix~\ref{sec:information}. Its information inequalities give
\begin{equation}\label{eq:v4-losses}
 E=\sum_j\E D(p_j\Vert u_{j+1}),\qquad
 \mathcal C\le\Delta,\qquad S\le\Delta/L.
\end{equation}
Put $\beta_c=\log_2(3/2)$.

\begin{lemma}[General endpoint moment]\label{lem:beta-moment}
For $0<\beta<\beta_c$, set
\[
 R_j^{(\beta)}=(j+1)^\beta\sum_{k\le j}W_{jk}^{1+\beta},\qquad
 C_\beta={1\over3-2^{1+\beta}}.
\]
On every conditioning atom,
\[
 (j+1)^\beta\sum_a q_j(a)^{1+\beta}\le R_j^{(\beta)},\qquad
 1\le R_j^{(\beta)}\le(j+1)^\beta,
 \qquad \sum_{j=1}^{n-1}R_j^{(\beta)}\le C_\beta n.
\]
\end{lemma}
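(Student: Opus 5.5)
The plan is to prove the three assertions separately. The first two are soft consequences of convexity and H\"older; the summed bound is where the specific structure of the partner weights $W_{jk}$ from Appendix~\ref{sec:information} enters, and I expect it to be the main obstacle. Throughout I use that, for each $j$, the row $(W_{jk})_{k\le j}$ is a probability vector on $\{0,\ldots,j\}$. This holds because $q_j$ is the law of the label at a randomized partner position, so $q_j$ is a mixture of the $p_k$.

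\emph{Pointwise moment bound.} The function $x\mapsto x^{1+\beta}$ is convex. I would first refine the conditioning until the exposure is complete, so that every $p_k$ becomes a point mass at the label actually occupying position $k$. On such a fine atom these labels are distinct, because different positions carry different labels. Hence on a fine atom
\[
 \sum_a q_j(a)^{1+\beta}=\sum_{k\le j}W_{jk}^{1+\beta}.
\]
On a coarser atom, $q_j$ is the conditional average of the fine-atom $q_j$'s, since $q_j$ is linear in the $p_k$ and the $W_{jk}$ are deterministic. Jensen's inequality applied coordinatewise then gives $\sum_a q_j(a)^{1+\beta}\le\sum_k W_{jk}^{1+\beta}$. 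Multiplying by $(j+1)^\beta$ yields the first claim. One could instead assert directly that the $p_k$ have disjoint supports on the atom and use $\sum_a(\sum_k W_{jk}p_k(a))^{1+\beta}=\sum_kW_{jk}^{1+\beta}\sum_ap_k(a)^{1+\beta}$. The refinement route avoids having to check disjointness at intermediate stages, so that is the one I would use.

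\emph{Bounds $1\le R_j^{(\beta)}\le (j+1)^\beta$.} For the upper bound, $0\le W_{jk}\le1$ gives $W_{jk}^{1+\beta}\le W_{jk}$, and the row sums to $1$. For the lower bound, apply H\"older (equivalently, the power mean inequality) over the $j+1$ indices $k\le j$:
\[
 1=\sum_kW_{jk}\le (j+1)^{\beta/(1+\beta)}\Bigl(\sum_kW_{jk}^{1+\beta}\Bigr)^{1/(1+\beta)}.
\]
Raising this to the power $1+\beta$ gives exactly $R_j^{(\beta)}\ge1$.

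\emph{Summed bound.} This is the main obstacle. The exponent $\beta_c=\log_2(3/2)$ is the point where $2^{1+\beta}=3$, which signals a dyadic self-similarity in the weights. The symmetric partner construction should split the range $\{0,\ldots,j\}$ by binary blocks, so that $W_{jk}$ is constant on dyadic blocks and halves from one scale to the next. I would first write $R_j^{(\beta)}$ explicitly in terms of the binary expansion of $j$. Then I would sum over $j$ inside each dyadic range $[2^i,2^{i+1})$ and look for a recursion of the form
\[
 \Sigma_{i+1}\le 2^{i}+\tfrac{2^{1+\beta}}{2}\Sigma_i .
\]
Passing from scale $i$ to scale $i+1$ costs a factor $2^\beta$ from $(j+1)^\beta$ and a factor $2^{-(1+\beta)}$ from each halved weight, while the number of indices doubles. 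Unrolling the recursion gives a geometric series with ratio $2^{1+\beta}/3$ relative to the linear term. This would give $\sum_jR_j^{(\beta)}\le n/(3-2^{1+\beta})=C_\beta n$, which is finite precisely when $\beta<\beta_c$.

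If the exact weights are messier than pure dyadic halving, my fallback is to bound $W_{jk}$ above by the dyadic profile, since $x\mapsto x^{1+\beta}$ is monotone, and then run the same block recursion with an inequality. The delicate part is getting the constant exactly $C_\beta$ rather than $C_\beta$ times an absolute factor, so the bookkeeping at the top dyadic block $j\approx n$ must be done carefully.
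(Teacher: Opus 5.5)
Your first two parts are correct. For $(j+1)^\beta\sum_aq_j(a)^{1+\beta}\le R_j^{(\beta)}$, you refine to point masses and apply convexity of $q\mapsto\sum_aq(a)^{1+\beta}$. The paper applies Jensen label by label, using $\sum_{k\le j}p_k(a)=1$, and then sums over $a$ using $\sum_ap_k(a)=1$. This is the same convexity argument (yours works at the extreme points of the doubly stochastic polytope), and both routes are valid. Drop the ``disjoint supports'' alternative, though: on a coarse atom the $p_k$ overlap in general (for uniform $\mu$ they are all uniform), so that identity is false. The bounds $1\le R_j^{(\beta)}\le(j+1)^\beta$ are exactly as in the paper.

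The summed bound has a genuine gap: you never use the explicit weights \eqref{eq:W} of Lemma~\ref{lem:losses}, and the recursion you guess is false. Unrolling $\Sigma_{i+1}\le2^i+2^\beta\Sigma_i$ gives $\Sigma_i\lesssim2^i/(2-2^\beta)$. That is a geometric series of ratio $2^{\beta-1}$, convergent for every $\beta<1$, so it cannot produce the threshold $\beta_c$. The ratio $2^{1+\beta}/3$ you then quote does not come out of it, and your heuristic factors $2^\beta\cdot2^{-(1+\beta)}\cdot2=1$ give yet another multiplier. In fact, restricting to $j\ge2^{d-1}$ and using the exact recursion below, at $\beta=\beta_c$ one finds $\sum_jR_j^{(\beta)}\ge n(d+7)/24$. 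By continuity in $\beta$, no bound $O(n)$ uniform near $\beta_c$, such as yours, can hold.

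The missing point is that the new top dyadic block feeds off the \emph{entire} lower cumulative sum, and the lower half is carried over as well. For $j=a+i$ with $a=2^{d-1}$, \eqref{eq:W} gives
\[
 U_j:=\sum_{k\le j}W_{jk}^{1+\beta}=2^{-1-\beta}a^{-\beta}+2^{-1-\beta}U_i .
\]
Hence $T_d=\sum_{j<2^d}U_j$ satisfies $T_d=(1+2^{-1-\beta})T_{d-1}+2^{-1-\beta}2^{(1-\beta)(d-1)}$ with $T_0=1$. For $Z_d=T_d/2^{(1-\beta)d}$ this becomes $Z_d=(2^{\beta-1}+\tfrac14)Z_{d-1}+\tfrac14$. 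The multiplier is below one exactly when $2^{1+\beta}<3$, the fixed point is $C_\beta$, and $Z_0=1\le C_\beta$, so $Z_d\le C_\beta$. Then bound $(j+1)^\beta$ crudely by $n^\beta$: $\sum_jR_j^{(\beta)}\le n^\beta T_d\le C_\beta n$. No delicate top-block bookkeeping is needed, and $C_\beta$ comes out exactly as the fixed point. In your normalization, $\Sigma_d:=2^{d\beta}T_d$ satisfies $\Sigma_d=(2^\beta+\tfrac12)\Sigma_{d-1}+2^{d-2}$; the $+\tfrac12$ you omitted is the carry-over of the lower half.
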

\begin{proof}
The matrix $(p_k(a))$ is doubly stochastic. Jensen followed by summation
over $a$ bounds $\sum_a(\sum_kW_{jk}p_k(a))^{1+\beta}$ by
$\sum_kW_{jk}^{1+\beta}$. The pointwise bounds follow because $W_j$ is a
probability vector.

Let $U_j=\sum_{k\le j}W_{jk}^{1+\beta}$ and
$T_d=\sum_{j<2^d}U_j$. The exact weight recursion in
\eqref{eq:W} gives
\[
 T_d=(1+2^{-1-\beta})T_{d-1}
             +2^{-1-\beta}2^{(1-\beta)(d-1)},\qquad T_0=1.
\]
For $Z_d=T_d/2^{(1-\beta)d}$ this becomes
\[
 Z_d=(2^{\beta-1}+1/4)Z_{d-1}+1/4.
\]
Its multiplier is below one precisely when $\beta<\beta_c$; its fixed
point is $C_\beta\ge1$. Therefore $Z_d\le C_\beta$. Finally
$\sum_jR_j^{(\beta)}\le n^\beta T_d\le C_\beta n$.
\end{proof}

\begin{lemma}[Adjustable scalar interpolation]\label{lem:v4-scalar}
Let $p,q$ be distributions on $\ell$ points and suppose
$\ell^\beta\sum_aq(a)^{1+\beta}\le R$ with $R\ge1$.
For $0<\delta<1$ and $\beta>0$ put
\[
 c_\beta=1+1/\beta,\quad
 a_{\beta,\delta}={1\over\beta(1-\delta)},\quad
 b_{\beta,\delta}=\log(8/\delta)+2+c_\beta.
\]
Writing $V=1-\ell^{-1}(\sum_a\sqrt{p(a)})^2$, one has
\begin{equation}\label{eq:v4-scalar}
 D(p\Vert u_\ell)\le c_\beta D(p\Vert q)
       +[a_{\beta,\delta}\log(R/V)+b_{\beta,\delta}]V.
\end{equation}
The last term is defined to be zero at $V=0$.
\end{lemma}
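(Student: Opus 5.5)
The plan is to first prove a crude, unlocalized version of \eqref{eq:v4-scalar} by a change-of-measure argument, and then localize it to the Hellinger defect $V$. For the crude step, write $\log(\ell p(a))=\log(p(a)/q(a))+\log(\ell q(a))$, so that
\[
 D(p\Vert u_\ell)=D(p\Vert q)+\sum_a p(a)\log(\ell q(a)).
\]
Next apply the Donsker--Varadhan inequality $\sum_a p(a)g(a)\le D(p\Vert q)+\log\sum_a q(a)e^{g(a)}$ with reference measure $q$ and $g=\beta\log(\ell q)$. This gives $\beta\sum_a p\log(\ell q)\le D(p\Vert q)+\log\bigl(\ell^\beta\sum_a q^{1+\beta}\bigr)\le D(p\Vert q)+\log R$. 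Hence
\[
 D(p\Vert u_\ell)\le c_\beta D(p\Vert q)+\beta^{-1}\log R .
\]
This already has the correct coefficient $c_\beta=1+1/\beta$ in front of $D(p\Vert q)$. What is still wrong is the additive term $\beta^{-1}\log R$, which must be replaced by something of order $V\log(R/V)$.

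To localize, split the points $a$ according to the ratio $\rho(a)=\ell p(a)$. Let $G=\{a:\rho(a)\le \kappa\}$, where $\kappa$ is a threshold of order $(8/\delta)^{O(1)}$ chosen from $\delta$, and let $B$ be its complement. Write $p=(1-\theta)p_G+\theta p_B$ with $\theta=p(B)$. Two elementary facts about $V$ drive the argument. First, $V\ge 1-\ell^{-1}(\sum_a\sqrt{p})^2$ controls the squared Hellinger distance $H^2=1-\ell^{-1/2}\sum_a\sqrt p$ up to a factor $2$. Second, on the set where $\rho$ is large, $(\sqrt\rho-1)^2\ge(1-\kappa^{-1/2})^2\rho$, so $\theta\lesssim V$; this step is where $\log(8/\delta)$ enters through the choice of $\kappa$. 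On $G$ the function $\rho\log\rho-\rho+1$ is bounded by $C(\kappa)(\sqrt\rho-1)^2$, so the contribution of $G$ to $D(p\Vert u_\ell)$ is at most $(2+\log\kappa)V$ plus normalization terms, after using $\sum_a(\rho-1)/\ell=0$. This accounts for the $2+\log(8/\delta)$ part of $b_{\beta,\delta}$.

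On $B$ we rerun the crude argument for the conditional law $p_B$. The mixture identity for relative entropy gives $D(p\Vert u)\le(1-\theta)D(p_G\Vert u)+\theta D(p_B\Vert u)+h(\theta)$, and similarly from below for $D(p\Vert q)$. Apply Donsker--Varadhan to $p_B$ with reference $q_B=q(\cdot\mid B)$, but take the exponent $\beta(1-\delta)$ rather than $\beta$, so that a fraction $\delta$ of the moment is left to absorb the normalization. Then $\log\bigl(\ell^\beta\sum_{B}q_B^{1+\beta}\bigr)\le\log R+(1+\beta)\log(1/q(B))$. We need a lower bound on $q(B)$, up to a $D(p\Vert q)$ correction, in terms of $\theta$; here we use $D(p_B\Vert q_B)\le \theta^{-1}D(p\Vert q)+\log(\theta/q(B))$ type inequalities. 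With these, the term becomes $\theta\,\frac{1}{\beta(1-\delta)}\log(R/\theta)$. Since $\theta\lesssim V$ and $x\mapsto x\log(R/x)$ is increasing for $x\le R/e$, this is at most $a_{\beta,\delta}V\log(R/V)$ plus $O(V)$ terms. The entropy term $h(\theta)\le\theta\log(e/\theta)$ is absorbed in the same way, and the $c_\beta V$ in $b_{\beta,\delta}$ pays for the $\theta$-weighted $D(p\Vert q)$ surplus. When $V=0$ we have $p=u_\ell$ and both sides vanish.

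I expect the main obstacle to be the bookkeeping on $B$. The coefficient of $D(p\Vert q)$ must stay exactly $c_\beta$ and not degrade to $c_{\beta(1-\delta)}$. At the same time, the loss $\log(1/q(B))$ must be converted into $\log(1/\theta)$ without an extra multiplicative constant in front of $\log(R/V)$. My first attempt would be to absorb any excess $D(p\Vert q)$ produced by the smaller exponent into the slack $\delta$, by applying Hölder with exponents $1/(1-\delta)$ and $1/\delta$ to split $\sum_B q^{1+\beta}$. If that fails, I would fall back on a cruder choice of constants, which the statement's generous $b_{\beta,\delta}$ appears to permit.
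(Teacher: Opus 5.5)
\textbf{Verdict: the proposal has a genuine gap.} With a threshold $\kappa$ that depends on $\delta$ alone, your argument cannot produce the stated constants; the paper's proof needs a threshold that adapts to $R/V$.

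\textbf{What already works.} Your first paragraph is exactly the paper's argument for the regime $V>1-\delta$, and it already finishes that regime. Since $R\ge1\ge V$, one has $\log(R/V)\ge\log R\ge0$, and $V/(1-\delta)>1$, so $\beta^{-1}\log R\le a_{\beta,\delta}V\log(R/V)$. This is the whole origin of the factor $1/(1-\delta)$ in $a_{\beta,\delta}$. It is not an exponent slack, and you do not need the exponent $\beta(1-\delta)$.

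\textbf{The bookkeeping on $B$ closes with exponent $\beta$.} You flag this as the main obstacle, but it is not. Use the exact identity $D(p\Vert u_\ell)=(1-\theta)D(p_G\Vert u_\ell)+\theta D(p_B\Vert u_\ell)+\theta\log\theta+(1-\theta)\log(1-\theta)$. Your version adds the binary entropy instead of subtracting it, which flips the sign of $\theta\log(1/\theta)$ and ruins the coefficient. Combine it with $\theta D(p_B\Vert q_B)\le D(p\Vert q)-\theta\log(\theta/q(B))+\theta$; your displayed inequality has this logarithm with the wrong sign. The terms $c_\beta\theta\log(1/q(B))$ then cancel, and $\sum_{a\in B}p(a)\log(\ell p(a))\le c_\beta D(p\Vert q)+\beta^{-1}\theta\log(R/\theta)+c_\beta\theta$.

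\textbf{Why a fixed $\kappa$ fails.} No choice of $\kappa$ works once $\delta$ is small, and $\delta=1/(Z+4)\to0$ is exactly the regime the clock uses. Two requirements conflict:
\begin{itemize}
\item \emph{The tail forces $\kappa$ large.} On your side the coefficient of $\log R$ is $\theta/\beta$; on the right it is $V/(\beta(1-\delta))$. The moment can be made large at bounded cost in $D(p\Vert q)$ by putting a spike of $q$ at a point of $B$. So you need $\theta\le V/(1-\delta)$ for every $p$. A small mass placed just above $\kappa$, with $p$ uniform elsewhere, has $\theta/V\to(1-\kappa^{-1/2})^{-2}$. This forces $\kappa\ge(1-\sqrt{1-\delta})^{-2}>\delta^{-2}$.
\item \emph{The bulk forces $\kappa$ small.} Take $q=u_\ell$, $R=1$, and $p$ uniform on half of the points. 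Then $B=\emptyset$ and $V=1/2$, so your bulk bound $(2+\log\kappa)V$ exceeds $1+\log(1/\delta)$. But the entire right-hand side is $c_\beta\log2+\tfrac12(a_{\beta,\delta}\log2+b_{\beta,\delta})=\tfrac12\log(1/\delta)+O_\beta(1)$.
\end{itemize}
Falling back on cruder constants does not rescue this either. The coefficient $a_{\beta,\delta}$ fixes the leading coefficient $A$ of the entropy clock.

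\textbf{The missing idea: an adaptive truncation level.} For $V\le1-\delta$ the paper uses $M=8(R/V)^{1/\beta}$.
\begin{itemize}
\item First bound $D(p\Vert u_\ell)\le\E_{u_\ell}A_0^2h(f/A_0^2)$, with $f=\ell p$, $A_0=\sqrt{1-V}$ and $h(x)=x\log x-x+1$. This rescaling makes the centered square integrate to exactly $V$. Your unrescaled $\ell^{-1}\sum_a(\sqrt{\rho(a)}-1)^2=2(1-A_0)$ can approach $2V$. The cost is $-\log A_0^2\le\log(1/\delta)$, which holds only when $V\le1-\delta$; this is why the case split is needed.
\item Lemma~\ref{lem:scalar-trunc} with $T=M/A_0^2$ then charges the entire $[\beta^{-1}\log(R/V)+\log(8/\delta)+2]V$ to the bulk.
\item Only the excess $T_1=\sum_{f>M}p\log(f/M)$ remains. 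Apply the variational inequality with exponent $\beta/(1+\beta)$, then H\"older against the $(1+\beta)$-moment of $q$, together with $p\{f>M\}\le8V$. The choice of $M$ makes the H\"older term at most $V$, giving $T_1\le c_\beta D(p\Vert q)+c_\beta V$.
\end{itemize}
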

\begin{proof}
If $D(p\Vert q)=\infty$ there is nothing to prove. Set $f=\ell p$,
$A_0=\E_{u_\ell}\sqrt f=\sqrt{1-V}$ and $h(x)=x\log x-x+1$.
As in Appendix~\ref{sec:scalar},
\[
 D(p\Vert u_\ell)\le\E_{u_\ell} A_0^2h(f/A_0^2).
\]
Suppose first that $0<V\le1-\delta$, so $A_0^2\ge\delta$.
Choose $M=8(R/V)^{1/\beta}$. The elementary scalar truncation
Lemma~\ref{lem:scalar-trunc}, with $T=M/A_0^2\ge4$, gives
\[
 D(p\Vert u_\ell)\le(\log M-\log A_0^2+2)V+T_1,
 \quad T_1=\sum_{f(a)>M}p(a)\log(f(a)/M).
\]
Let $\theta=\beta/(1+\beta)$. The entropy variational inequality gives
\[
 \theta T_1\le D(p\Vert q)+\log\E_q
                 \exp\{\theta\log^+(f/M)\}.
\]
H\"older yields
\[
 \E_q\exp\{\theta\log^+(f/M)\}
 \le1+M^{-\theta}R^{1/(1+\beta)}p\{f>M\}^{\theta}.
\]
For $f>M\ge8$, $f\le4(\sqrt f-1)^2$. Hence
$p\{f>M\}\le8(1-A_0)\le8V$. The added term is at most $V$ by the
choice of $M$. Thus $T_1\le c_\beta D(p\Vert q)+c_\beta V$.
Substitution proves \eqref{eq:v4-scalar}, even with coefficient
$1/\beta$ instead of $a_{\beta,\delta}$ in this case.

For $V>1-\delta$, apply the entropy variational inequality to
$\beta\log(\ell q)$, obtaining
\[
 D(p\Vert u_\ell)\le c_\beta D(p\Vert q)+\beta^{-1}\log R.
\]
Since $V/(1-\delta)>1$ and $\log(R/V)\ge\log R\ge0$, this is bounded by
\eqref{eq:v4-scalar}. The uniform case and zeros follow by continuity.
\end{proof}

\begin{proposition}[Explicit improved clock]\label{prop:v4-clock}
For $1\le B\le n$, put
\[
 J=\left\lceil{\log(n/B)\over L}\right\rceil,\quad Z=LJ,\quad
 \delta={1\over Z+4},\quad \beta=\beta_c-{1\over Z+4}.
\]
Use the constants of Lemma~\ref{lem:v4-scalar}, and define
\begin{align}
 D_0&={a_{\beta,\delta}\over L},\nonumber\\
 F_0&=c_\beta+{b_{\beta,\delta}\over L}
   +D_0\left[\log^+(C_\beta L)+\log(100+12Z)+{1\over e}\right],\nonumber\\
 R_*(B)&=\left\lceil100\log^+(dL)\right\rceil
       +\sum_{j=1}^J\left\lceil L(F_0+D_0jL)\right\rceil.
 \label{eq:v4-clock}
\end{align}
Then $\Ent(q_{R_*(B)})\le B$. For fixed $\eta>0$ and
$B=\eta ne^{-s}\in[1,n]$, as $s\to\infty$,
\begin{equation}\label{eq:v4-clock-asymptotic}
 R_*(B)=A s^2+O_\eta(s\log(s+2)+\log(d+2)),
 \qquad A={1\over2\log(3/2)}.
\end{equation}
\end{proposition}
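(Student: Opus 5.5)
We will prove the statement by deriving a one-sweep entropy decrement inequality of the form $E\le \Delta\,\Phi(E)$ with an explicit profile $\Phi$, and then integrating it dyadically. Fix a sweep and the exposure notation of Appendix~\ref{sec:information}. Apply Lemma~\ref{lem:v4-scalar} to each pair $(p_j,q_j)$ on $\ell=j+1$ points with $R=R_j^{(\beta)}$; the hypothesis $\ell^\beta\sum_a q_j(a)^{1+\beta}\le R_j^{(\beta)}$ and $R_j^{(\beta)}\ge1$ hold by Lemma~\ref{lem:beta-moment}. Writing $V_j$ for the square-root defect, we sum over $j$ and take expectations, using \eqref{eq:v4-losses}:
\[
 E\le c_\beta\,\mathcal C+\sum_j\E\bigl[a_{\beta,\delta}\log(R_j/V_j)+b_{\beta,\delta}\bigr]V_j
 \le c_\beta\Delta+b_{\beta,\delta}S+a_{\beta,\delta}\sum_j\E\,V_j\log(R_j/V_j).
\]
We keep the two losses separate: $\mathcal C\le\Delta$ pays for the first term and $S\le\Delta/L$ pays for the rest. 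This is allowed because we only use each inequality as a bound by $\Delta$, and never add them.

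The main obstacle is the term $\sum_j\E V_j\log(R_j/V_j)$. We handle it by concavity of $v\mapsto v\log(R/v)$. First split it as $\sum V_j\log R_j+\sum V_j\log(1/V_j)$. Jensen, weighted by the $V_j$ with total mass $S$, gives $\sum_j V_j\log(R_j/V_j)\le S\log\bigl(\sum_j R_j/S\bigr)\le S\log(C_\beta n/S)$, using $\sum_j R_j\le C_\beta n$ from Lemma~\ref{lem:beta-moment}. Then $S\le\Delta/L$ together with monotonicity of $x\log(C n/x)$ gives
\[
 E\le\Delta\Bigl[c_\beta+\frac{b_{\beta,\delta}}L+\frac{a_{\beta,\delta}}{L}\log^+\frac{C_\beta Ln}{\Delta}\Bigr]+\text{(the }1/e\text{ slack)}.
\]
The $1/e$ slack absorbs the region where $x\log(1/x)$ is not monotone. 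Next we replace $n/\Delta$ by $(n/E)(E/\Delta)$ and bound $E/\Delta$ by the coarse profile \eqref{eq:coarse-profile}. For $E\ge B$ this gives $\log(E/\Delta)\le\log(100+12Z)$ once $\log^+(n/E)\le Z$. The result is $E/\Delta\le L\bigl(F_0+D_0\log(n/E)\bigr)$, with $F_0$ and $D_0$ exactly as in \eqref{eq:v4-clock}.

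To integrate, we first run $\lceil100\log^+(dL)\rceil$ sweeps. Starting from $\Ent\le\log n!\le nLd$, the coarse profile \eqref{eq:coarse-profile} shows this brings the entropy below $n$, since each sweep removes a fixed fraction of it while $E\ge n$. We then pass through the dyadic windows $E\in(n2^{-j},n2^{-j+1}]$ for $j=1,\dots,J$. On window $j$ we have $\log(n/E)\le jL$, so each sweep multiplies $E$ by at most $1-1/[L(F_0+D_0jL)]$. Hence $\lceil L(F_0+D_0jL)\rceil\ge(\log 2)\cdot L(F_0+D_0jL)/L$ sweeps halve $E$ (using $\log(1-x)\le -x$). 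Since $J$ halvings reach $n2^{-J}\le B$, we get $\Ent(q_{R_*(B)})\le B$, as monotonicity of entropy along the chain allows us to overshoot.

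For the asymptotics, take $B=\eta ne^{-s}$, so that $Z=s+O_\eta(1)$. Then $\delta,\ \beta_c-\beta=O(1/s)$ and $C_\beta=O(s)$, so $a_{\beta,\delta}=\beta_c^{-1}+O(1/s)$, $D_0=1/(L\beta_c)+O(1/s)=1/\log(3/2)+O(1/s)$, and $F_0=O(\log s)$. Summing gives
\[
 \sum_{j\le J}L^2D_0 j=\tfrac12 D_0(LJ)^2+O(s)=\frac{s^2}{2\log(3/2)}+O(s).
\]
The $F_0$ terms and the ceilings contribute $O(s\log s)$, and the preparation term is $O(\log(d+2))$. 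This yields \eqref{eq:v4-clock-asymptotic}.
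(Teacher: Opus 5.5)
Your proposal is correct and follows the paper's proof step for step: you sum Lemma~\ref{lem:v4-scalar} over $j$ and the conditioning atoms, apply the log-sum inequality with $\sum_j R_j^{(\beta)}\le C_\beta n$, pay $\mathcal C$ and $S$ separately by $\Delta$, bootstrap $\log(E/\Delta)$ via the coarse profile \eqref{eq:coarse-profile}, integrate over dyadic bands, and sum the ceilings. Two small slips should be fixed. First, the profile your derivation actually yields is $E/\Delta\le F_0+D_0\log^+(n/E)$, with no extra factor $L$; the $L$ in the summand $\lceil L(F_0+D_0jL)\rceil$ enters only as the $\log 2$ sweeps needed per halving. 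Second, the $1/e$ term should sit inside the bracket multiplied by $D_0$, and it is best obtained as the paper does, by writing $S\log(C_\beta n/S)=(\Delta/L)[v\log(C_\beta Ln/\Delta)+v\log(1/v)]$ with $v=LS/\Delta\in(0,1]$, which needs no monotonicity argument.
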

\begin{proof}
Summing \eqref{eq:v4-scalar}, using the endpoint moment and log-sum
inequality (also over conditioning atoms), gives
\[
 E\le c_\beta\mathcal C+
 a_{\beta,\delta}S\log(C_\beta n/S)+b_{\beta,\delta}S.
\]
Let $x=E/\Delta\ge1$. For $v=LS/\Delta\in(0,1]$, use
$v\log(1/v)\le1/e$ to obtain
\[
 S\log(C_\beta n/S)
 \le{\Delta\over L}[\log^+(C_\beta Ln/\Delta)+1/e].
\]
Consequently, with $r=\log^+(n/E)$,
\[
 x\le c_\beta+b_{\beta,\delta}/L+
 D_0[\log^+(C_\beta L)+r+\log x+1/e].
\]
When $E\ge ne^{-Z}$, the coarse profile
\eqref{eq:coarse-profile} bounds $\log x\le\log(100+12Z)$. Thus
$x\le F_0+D_0r$. While entropy is in the $j$th dyadic band, this is at
most $F_0+D_0jL$, so the corresponding summand of \eqref{eq:v4-clock}
suffices to halve the previous threshold. The initial term reduces entropy
from at most $ndL$ to $n$ using \eqref{eq:coarse-profile}. This proves
the finite assertion, including $J=0$.

The chosen $\beta$ is bounded away from zero, and
\[
 D_0={1\over\beta_c L}+O((Z+4)^{-1}),\qquad
 C_\beta=O(Z+4),\qquad F_0=O(\log(Z+2)).
\]
Summing the ceilings gives
$R_*(B)=\frac12D_0Z^2+O(Z\log(Z+2)+\log(d+2))$.
Since $\beta_cL=\log(3/2)$ and $Z=s+\log(1/\eta)+O(1)$, this is
\eqref{eq:v4-clock-asymptotic}. All constants are uniform as the moment
order approaches its critical endpoint at the stated rate.
\end{proof}

\section{Ordered-tuple multiplicity amplification}\label{sec:tuple-amplification}
This section is the algebraic bridge from ``every small ordered collection of
cards is smooth'' to ``the full permutation is smooth.''  A reader unfamiliar
with representation theory may read Lemma~\ref{lem:HS-amplification} as the
bridge theorem and then inspect its proof for the bookkeeping that produces
the binomial factors.

The regular representation of $S_n$ decomposes functions on $S_n$ into
orthogonal matrix-valued Fourier modes indexed by partitions $\lambda\vdash n$.
Tracking an ordered $h$-tuple of distinct labels gives another representation:
$S_n$ acts by permuting the set $\Omega_h$.  The key fact is that a low-level
Fourier mode occurs with large, explicitly computable multiplicity inside this
ordered-tuple action.  This is why information about only $h$ cards can control
a much larger part of the full-deck distribution.

A \emph{standard Young tableau} of a (possibly skew) Young diagram is a filling
of its boxes by consecutive integers, increasing from left to right in every
row and from top to bottom in every column.  The branching rule identifies the
multiplicity below with the number of such tableaux of an appropriate skew
shape.

All statements in this section concern arbitrary probability laws on
$S_n$, not just the Thorp walk. For a partition $\lambda\vdash n$, let
$V_\lambda$ be its irreducible representation and $f^\lambda=\dim V_\lambda$.
Its level is
\[
 \ell(\lambda)=\min(n-\lambda_1,n-\lambda'_1).
\]
Let $E_k$ be the orthogonal projection in $L^2(U)$ onto the sum of all
isotypic components of level $k$, and let $E_{>K}$ include every level
larger than $K$. Level zero consists of the constant and sign characters.
For convolution by a probability density $f$, write $T_f$ for the
operator and $\norm{T_f}_k$ for its norm on level $k$.

For an orthogonal realization $\rho_\lambda$, put
\[
 \widehat f(\lambda)=\E_{g\sim fU}\rho_\lambda(g).
\]
Plancherel and the induced action on ordered tuples give
\begin{align}
 \norm{E_k f}_2^2
 &=\sum_{\ell(\lambda)=k}f^\lambda
                 \norm{\widehat f(\lambda)}_\HS^2,\label{eq:plancherel}\\
 \norm{P_h^f}_\HS^2
 &=\sum_{\lambda\vdash n}m_h(\lambda)
                 \norm{\widehat f(\lambda)}_\HS^2.
 \label{eq:induced-HS}
\end{align}
Here $P_h^f$ is the stochastic tuple transition matrix and $m_h(\lambda)$
its representation multiplicity. Hilbert--Schmidt norm in
\eqref{eq:induced-HS} is the ordinary matrix norm
$\sum_{x,y}(P_h^f(x,y))^2$; no additional $N_h$ factor is present.

\begin{lemma}[Exact multiplicity and dimension ratio]\label{lem:multiplicity}
Let $1\le k\le h\le n-k$ and $\lambda=(n-k,\mu)$ with $\mu\vdash k$.
Then
\begin{equation}\label{eq:multiplicity}
 m_h(\lambda)=\binom hk f^\mu,
 \qquad f^\lambda\le\binom nk f^\mu.
\end{equation}
\end{lemma}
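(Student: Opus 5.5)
We identify the ordered-tuple module with an induced representation, then compute both quantities by counting standard Young tableaux.

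\textbf{The multiplicity.} The permutation action of $S_n$ on $\Omega_h$ is transitive. The stabilizer of the tuple $(1,\ldots,h)$ is the copy of $S_{n-h}$ acting on the remaining $n-h$ labels. Hence $\mathbb C[\Omega_h]\cong\mathrm{Ind}_{S_{n-h}}^{S_n}\mathbf 1$. By Frobenius reciprocity, $m_h(\lambda)=\dim V_\lambda^{S_{n-h}}$, the dimension of the $S_{n-h}$-fixed vectors in $V_\lambda$.

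We compute this by iterating the branching rule along $S_n\supset S_{n-1}\supset\cdots\supset S_{n-h}$. At each step we remove a corner box, and only the trivial representation of $S_{n-h}$, i.e.\ the one-row shape $(n-h)$, contributes fixed vectors. So $m_h(\lambda)$ equals the number of chains $(n-h)=\nu^{(0)}\subset\nu^{(1)}\subset\cdots\subset\nu^{(h)}=\lambda$ that add one box at a time. Equivalently, it is the number of standard Young tableaux of the skew shape $\lambda/(n-h)$. (This is the Kostka number $K_{\lambda,(n-h,1^h)}$.)

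\textbf{Shape of the skew diagram.} With $\lambda=(n-k,\mu)$ we need $(n-h)\subseteq\lambda$. This holds because $n-h\le n-k$ (from $k\le h$) and $\mu_1\le k\le n-h$, which also confirms that $\lambda$ is a partition. The skew shape then splits into two pieces.
\begin{itemize}
\item In row one it consists of the boxes in columns $n-h+1,\ldots,n-k$, a single row of $h-k$ boxes.
\item Rows two onward are exactly the diagram of $\mu$, which occupies columns $1,\ldots,\mu_1$ with $\mu_1\le k\le n-h$.
\end{itemize}
Every column of $\mu$ is therefore strictly to the left of column $n-h+1$. So no box of the row piece lies above or to the left of a box of $\mu$, and the two pieces impose no order constraints on each other.

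\textbf{Counting.} A standard filling of a disjoint union of two non-interacting pieces is determined by two choices: which $h-k$ of the $h$ labels go into the row piece (there is only one increasing filling of a row), and a standard Young tableau of $\mu$ on the remaining $k$ labels. This gives $m_h(\lambda)=\binom hk f^\mu$.

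\textbf{The dimension bound.} We use an injection. A standard Young tableau of $\lambda$ is determined by two pieces of data: the set $S\subseteq\{1,\ldots,n\}$ of $k$ entries lying outside row one, and the relative order pattern of those entries on $\mu$. That pattern is a standard Young tableau of $\mu$ after relabelling $S$ increasingly, because the row and column conditions restricted to rows $\ge2$ are exactly those of $\mu$. Row one is then forced to be the complement of $S$ in increasing order. The map from standard Young tableaux of $\lambda$ to pairs $(S,\text{SYT}(\mu))$ is injective, so $f^\lambda\le\binom nk f^\mu$. (Not every pair is realized, because of the column constraints against row one, which is why this is only an inequality.) The hook length formula gives an alternative check.

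\textbf{Main obstacle.} The only delicate point is the containment and disjointness check in the skew shape. This is exactly where the hypotheses $k\le h\le n-k$ are used: $h\ge k$ makes the row piece of nonnegative length, and $h\le n-k$ places it to the right of $\mu$. Once that check is made, the rest is the routine branching-rule identification.
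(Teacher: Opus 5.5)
Your proof is correct and is essentially the paper's argument. In both, the tuple module is $\operatorname{Ind}_{S_{n-h}}^{S_n}\mathbf 1$; reciprocity plus iterated branching reduces $m_h(\lambda)$ to standard tableaux of $\lambda/(n-h)$; $\mu_1\le k\le n-h$ splits this skew shape into a detached row of $h-k$ boxes and $\mu$; and $f^\lambda\le\binom nk f^\mu$ comes from the injection into pairs (set of lower entries, tableau of $\mu$). One wording nit: the row piece does lie above $\mu$, so the claim you want is that no box of the row piece is weakly northwest of a box of $\mu$, which your column comparison already establishes.
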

\begin{proof}
The ordered-tuple module is $\operatorname{Ind}_{S_{n-h}}^{S_n}\mathbf1$.
By reciprocity and iterated Young branching, $m_h(\lambda)$ is the number
of standard tableaux of the skew shape $\lambda/(n-h)$; the branching
rule is recalled in \cite[Section 2.2]{KeevashLifshitz2023}. Because $n-h\ge k\ge\mu_1$,
this skew shape is a separated row of length $h-k$ and the lower diagram
$\mu$. Choose the $k$ entries placed in $\mu$ and fill it in $f^\mu$ ways;
the other row is forced. This proves the equality. In a full standard
tableau of shape $\lambda$, choosing the $k$ lower entries and a tableau
of shape $\mu$ determines the first row, but some choices may violate
column inequalities. This gives the upper bound.
\end{proof}

\begin{lemma}[Unsigned and signed marked bounds]\label{lem:HS-amplification}
Let $f$ be a probability density and
$\mathcal B_h=\norm{P_h^f}_\HS^2$. Suppose
$1\le k<(n-1)/2$ and $k\le h\le n-k$. Then
\begin{align}
 \norm{T_f}_k^2
 &\le\frac{\mathcal B_h}{\binom hk},\label{eq:op-amplification}\\
 \norm{E_k f}_2^2
 &\le2\mathcal B_h\frac{\binom nk}{\binom hk},\label{eq:energy-amplification}\\
 \norm{E_k(f*f*f)}_2^2
 &\le2\mathcal B_h^3\frac{\binom nk}{\binom hk^3}.
 \label{eq:cubic-amplification}
\end{align}
\end{lemma}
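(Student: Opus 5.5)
The plan is to read all three inequalities off the two Plancherel-type identities \eqref{eq:plancherel} and \eqref{eq:induced-HS}, together with Lemma~\ref{lem:multiplicity}. The one extra ingredient is a sign twist for partitions close to a long column.

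\textbf{Level-$k$ partitions.} Since $k<(n-1)/2$, a partition $\lambda$ with $\ell(\lambda)=k$ has exactly one of two forms. Either $\lambda=(n-k,\mu)$ with $\mu\vdash k$ (a long first row), or $\lambda'=(n-k,\mu)$ (a long first column). Both cannot hold: a row of length $n-k$ and a column of length $n-k$ together need at least $2(n-k)-1>n$ boxes.

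For the second family use $\lambda=\lambda_0\otimes\sgn$ with $\lambda_0=(n-k,\mu)$. Twisting the density by the sign character (equivalently, working with the sign-twisted tuple module mentioned in the overview) leaves $\norm{\widehat f(\lambda)}_\HS=\norm{\widehat f(\lambda_0)}_\HS$ unchanged, because $\rho_{\lambda}=\sgn\cdot\rho_{\lambda_0}$ up to equivalence, so $\widehat f(\lambda)$ is just $\widehat f$ of the sign-twisted law at $\lambda_0$. The twisted tuple matrix, with entries $\sum\pm$, has HS norm at most that of the unsigned $P_h^f$ entrywise, so its HS norm is also at most $\mathcal B_h$. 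I expect this bookkeeping to be the main obstacle: one must check that the signed analogue of \eqref{eq:induced-HS} holds with the same multiplicities $m_h(\lambda_0)$ and a budget still bounded by $\mathcal B_h$. If that turns out to be delicate, the fallback is to bound the two families separately, which costs exactly the factor $2$ in \eqref{eq:energy-amplification}.

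\textbf{Operator bound \eqref{eq:op-amplification}.} Fix $\lambda=(n-k,\mu)$. The norm of $T_f$ on the $\lambda$-isotypic component equals the operator norm of $\widehat f(\lambda)$, which is at most its HS norm. Dropping all other terms from \eqref{eq:induced-HS} gives $m_h(\lambda)\norm{\widehat f(\lambda)}_\HS^2\le\mathcal B_h$. Lemma~\ref{lem:multiplicity} gives $m_h(\lambda)=\binom hk f^\mu\ge\binom hk$, so $\norm{T_f}_k^2\le\mathcal B_h/\binom hk$. The column family is handled the same way through the twist.

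\textbf{Energy bound \eqref{eq:energy-amplification}.} By \eqref{eq:plancherel} and $f^\lambda\le\binom nk f^\mu$,
\[
 \sum_{\lambda=(n-k,\mu)}f^\lambda\norm{\widehat f(\lambda)}_\HS^2
 \le\frac{\binom nk}{\binom hk}\sum_{\mu}m_h(\lambda)\norm{\widehat f(\lambda)}_\HS^2
 \le\frac{\binom nk}{\binom hk}\mathcal B_h .
\]
Adding the same bound for the column family gives the factor $2$.

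\textbf{Cubic bound \eqref{eq:cubic-amplification}.} Use $\widehat{f*f*f}(\lambda)=\widehat f(\lambda)^3$, up to the convention's ordering, and submultiplicativity of the HS norm: $\norm{\widehat f^3}_\HS^2\le\norm{\widehat f}_\HS^6$. The per-$\lambda$ bound $\norm{\widehat f(\lambda)}_\HS^2\le\mathcal B_h/(\binom hk f^\mu)$ from above then gives
\[
 f^\lambda\norm{\widehat f(\lambda)}_\HS^6
 \le\binom nk f^\mu\,\frac{\mathcal B_h^3}{\binom hk^3(f^\mu)^3}.
\]
Summing over $\mu$ requires $\sum_\mu (f^\mu)^{-2}$ to be controlled. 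It is cleaner to avoid this sum: keep one factor inside the multiplicity sum,
\[
 f^\lambda\norm{\widehat f}_\HS^6\le \binom nk f^\mu\norm{\widehat f}_\HS^2\cdot\bigl(\mathcal B_h/\binom hk\bigr)^2,
\]
and then bound $\sum_\mu\binom nk f^\mu\norm{\widehat f}_\HS^2\le\binom nk\mathcal B_h/\binom hk$ exactly as in the energy bound. This gives $\mathcal B_h^3\binom nk/\binom hk^3$ per family, and doubling for the column family yields \eqref{eq:cubic-amplification}.
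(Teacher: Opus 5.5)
Your proposal is correct and follows the paper's proof essentially step for step. The row wing comes from per-$\lambda$ bounds read off \eqref{eq:induced-HS} and Lemma~\ref{lem:multiplicity}. The column wing uses the sign-twisted tuple matrix, which is entrywise dominated by $P_h^f$. For the cube, two factors are bounded by the per-$\lambda$ estimate; the paper phrases this as $\norm{ABC}_\HS\le\norm A_\HS\norm B_{\rm op}\norm C_{\rm op}$.

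The step you flag as the main obstacle is immediate. Since $\sgn\otimes V_\nu\cong V_{\nu'}$, the $\lambda$-multiplicity of the twisted module is $m_h(\lambda')=m_h(\lambda_0)$. Note that the identity you actually need is $\widehat f(\lambda)=\widehat{\sgn\cdot f}(\lambda_0)$. The literal equality $\norm{\widehat f(\lambda)}_\HS=\norm{\widehat f(\lambda_0)}_\HS$ that you wrote is false in general.
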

\begin{proof}
For the row wing $\lambda_1=n-k$, \eqref{eq:induced-HS} and
\eqref{eq:multiplicity} imply
\[
 \norm{\widehat f(\lambda)}_{\rm op}^2
 \le\norm{\widehat f(\lambda)}_\HS^2
 \le\frac{\mathcal B_h}{\binom hk f^\mu}
 \le\frac{\mathcal B_h}{\binom hk}.
\]
Also $f^\lambda/m_h(\lambda)\le\binom nk/\binom hk$, so summing the
nonnegative terms in \eqref{eq:induced-HS} proves the energy bound for
that wing without any factor counting partitions.

For the column wing, use the sign-twisted tuple module. Its matrix has
entries
\[
 \sum_g f(g)U(g)\sgn(g)\ind_{g x=y},
\]
whose absolute values are at most $P_h^f(x,y)$. Its squared Hilbert--Schmidt norm is therefore at
most $\mathcal B_h$. The multiplicity of $\lambda$ in this module is the
unsigned multiplicity of $\lambda'$. This proves the same bounds for
the column wing. The two wings are disjoint in the stated range, giving
the factor two in \eqref{eq:energy-amplification}.
Finally matrix multiplication and
$\norm{ABC}_\HS\le\norm A_\HS\norm B_{\rm op}\norm C_{\rm op}$ give
\eqref{eq:cubic-amplification}. No matrix normality or eigenvalue estimate
is used.
\end{proof}

\section{Truncation preserves the marked budget}
The purpose of this section is elementary but important.  Relative entropy
controls an average of $F\log F$, whereas the high-level representation bound
needs an $L^2$ norm.  A tiny set on which $F$ is enormous can have modest
entropy but terrible $L^2$ norm.  We isolate that rare set exactly.

Choose a threshold $M$.  The mass of the set $\{F>M\}$ is at most $\eta$ once
$M$ is chosen from the entropy bound.  Conditioning on the complementary event
produces a probability density $f$ with $f\le F/p$ and controlled $L^2$ norm;
the removed mass becomes another probability density $b$.  Thus
$F=pf+(1-p)b$ is an identity of probability laws.  Nothing is discarded from
the actual shuffle, and the exceptional probability $1-p$ is tracked all the
way to the final total-variation estimate.

\begin{lemma}[Entropy truncation with pointwise domination]\label{lem:new-truncate}
Let $F$ be a probability density with $\Ent(F)\le B$, and let
$0<\eta<1$. Set $\log M=(B+1/e)/\eta$. There is an exact decomposition
\begin{equation}\label{eq:new-mixture}
 F=p f+(1-p)b,\qquad p\ge1-\eta,
\end{equation}
into probability densities, with
\begin{equation}\label{eq:new-truncate}
 f\le F/p,\qquad
 \log\norm f_2\le\frac{B+1/e}{2\eta}
                     +\frac12\log\frac1{1-\eta}.
\end{equation}
If $N_hP_h^F(x,y)\le e^{\alpha h^2}$, then
\begin{equation}\label{eq:truncated-HS}
 \norm{P_h^f}_\HS^2\le p^{-1}e^{\alpha h^2}.
\end{equation}
\end{lemma}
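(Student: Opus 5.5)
The plan is to truncate $F$ at the threshold $M$ and normalize what remains. Define $f=F\ind_{\{F\le M\}}/p$ with $p=\E_U[F\ind_{\{F\le M\}}]$, and put $b=F\ind_{\{F>M\}}/(1-p)$; if $p=1$, take $b$ to be an arbitrary density. Then \eqref{eq:new-mixture} holds identically, both pieces are probability densities, and $f\le F/p$ holds pointwise.

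The first step is the mass bound $1-p=\E_U[F\ind_{\{F>M\}}]\le\eta$. We have
\[
 \E_U[F\log F\,\ind_{\{F>M\}}]=\Ent(F)-\E_U[F\log F\,\ind_{\{F\le M\}}]\le B+1/e,
\]
because $x\log x\ge-1/e$ and $U$ is a probability measure. On the set $\{F>M\}$ we also have $F\log F\ge F\log M$, since $\log M>0$. Combining these gives $(1-p)\log M\le B+1/e$. With $\log M=(B+1/e)/\eta$ this yields $1-p\le\eta$.

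The second step is the $L^2$ bound. Since $f\le M/p$ on its support, we get $\norm f_2^2=\E_U f^2\le (M/p)\E_U f=M/p$. Therefore
\[
 \log\norm f_2\le\tfrac12\log M+\tfrac12\log(1/p)\le\frac{B+1/e}{2\eta}+\tfrac12\log\frac1{1-\eta},
\]
using $p\ge1-\eta$.

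The third step, \eqref{eq:truncated-HS}, is the only one that needs any thought. Because $f\le F/p$ pointwise, every entry of the tuple kernel satisfies $0\le P_h^f(x,y)\le p^{-1}P_h^F(x,y)\le p^{-1}e^{\alpha h^2}/N_h$. Each row of $P_h^f$ is a probability vector, so
\[
 \sum_y P_h^f(x,y)^2\le\max_y P_h^f(x,y)\le p^{-1}e^{\alpha h^2}/N_h.
\]
Summing this over the $N_h$ rows gives $\norm{P_h^f}_\HS^2\le p^{-1}e^{\alpha h^2}$. I do not expect any real obstacle in this lemma. The only care needed is the $-1/e$ lower bound on $F\log F$, which must be used on the complement $\{F\le M\}$, and the degenerate case $p=1$.
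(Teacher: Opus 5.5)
Your proposal is correct and follows the paper's proof step for step. Both use the same truncation $f=F\ind_{\{F\le M\}}/p$, the bound $x\log x\ge-1/e$ to get $(1-p)\log M\le B+1/e$, the estimate $\norm f_2^2\le M/p$, and the "max entry times total mass" bound for the Hilbert--Schmidt norm; you apply that last bound row by row, while the paper applies it to all entries at once, which is the same.
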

\begin{proof}
Let $a=\E_U[F\ind_{F>M}]$. Since $x\log x\ge-1/e$,
$a\log M\le\Ent(F)+1/e\le B+1/e$, so $a\le\eta$.
Set $p=1-a$, $f=F\ind_{F\le M}/p$, and normalize the tail as $b$;
when $a=0$, the choice of $b$ is immaterial. Then
$\norm f_2^2\le M/p$, proving \eqref{eq:new-truncate}.
Domination passes to every tuple transition entry. Because $P_h^f$ is
stochastic,
\[
 \sum_{x,y}P_h^f(x,y)^2
 \le\max_{x,y}P_h^f(x,y)\sum_{x,y}P_h^f(x,y)
 \le \frac{e^{\alpha h^2}}{pN_h}N_h.
\]
This proves \eqref{eq:truncated-HS}.
\end{proof}

The last inequality is why entropy preparation and marked smoothing can
be performed by the \emph{same} actual block. It is not an assertion that
one may overlap independent factors without paying their time.

\begin{proposition}[Optimized marked order]\label{prop:optimized-order}
Suppose $t\ge2m$, $F=q_t$, and $f$ is as in
Lemma~\ref{lem:new-truncate}. Let $\alpha$ be \eqref{eq:alpha} and assume
\begin{equation}\label{eq:order-domain}
 \alpha n\ge16,\qquad 1\le K<n/10,\qquad \alpha K\le1/16.
\end{equation}
Then, for $1\le k\le K$,
\begin{align}
 \norm{T_f}_k^2
 &\le p^{-1}\bigl(2e\sqrt{\alpha k}\bigr)^k,
 \label{eq:optimized-op}\\
 \norm{E_k f}_2^2
 &\le p^{-1}\bigl(4en\sqrt{\alpha/k}\bigr)^k,
 \label{eq:optimized-energy}\\
 \norm{E_k f^{*3}}_2^2
 &\le p^{-3}\bigl(16e^3n\alpha^{3/2}\sqrt{k}\bigr)^k.
 \label{eq:optimized-cube}
\end{align}
\end{proposition}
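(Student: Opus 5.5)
The plan is to chain three earlier results. Corollary~\ref{cor:quadratic} supplies the marked budget for the actual block $F=q_t$. Lemma~\ref{lem:new-truncate} transfers that budget to the regular part $f$. Lemma~\ref{lem:HS-amplification} converts it into level-$k$ bounds. The proof then reduces to choosing the tuple size $h$ as a function of $k$ and checking that this $h$ lies in every admissible range.

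\textbf{Step 1 (budget).} Since $t\ge2m$, Corollary~\ref{cor:quadratic} gives $N_hP_h^F(x,y)\le e^{\alpha h^2}$ whenever $\alpha h\le m-1$. By \eqref{eq:truncated-HS} this yields
\[
 \mathcal B_h=\norm{P_h^f}_\HS^2\le p^{-1}e^{\alpha h^2}.
\]

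\textbf{Step 2 (amplification and choice of $h$).} I would use the elementary bounds $\binom hk\ge(h/k)^k$ and $\binom nk\le(en/k)^k$ in Lemma~\ref{lem:HS-amplification}.

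For \eqref{eq:op-amplification} this gives
\[
 \norm{T_f}_k^2\le p^{-1}\exp\bigl(\alpha h^2+k\log(k/h)\bigr).
\]
The exponent is minimized at $h^2=k/(2\alpha)$, where it equals $k/2+\tfrac k2\log(2\alpha k)$. The bound is then $p^{-1}(\sqrt{2e\alpha k})^k$, which leaves ample room below $(2e\sqrt{\alpha k})^k$.

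For \eqref{eq:energy-amplification} the same choice gives
\[
 2p^{-1}e^{\alpha h^2}(en/h)^k\approx 2p^{-1}\bigl(e^{3/2}\sqrt2\,n\sqrt{\alpha/k}\bigr)^k .
\]
The prefactor $2\le 2^k$ is absorbed into the base $4e$, since $e^{3/2}\sqrt2\cdot2$ is comparable to $4e$. If the margin turns out too thin here, I would instead take $h^2=k/\alpha$. This changes the estimate to $e^{\alpha h^2}(en/h)^k=(e^2 n\sqrt{\alpha/k})^k$, and I would use whichever choice fits.

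For \eqref{eq:cubic-amplification}, with $h^2=k/(2\alpha)$,
\[
 2p^{-3}e^{3\alpha h^2}(en/k)^k(k/h)^{3k}=2p^{-3}\bigl(e^{5/2}2^{3/2}n\alpha^{3/2}\sqrt k\bigr)^k .
\]
This is below $p^{-3}(16e^3n\alpha^{3/2}\sqrt k)^k$.

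\textbf{Step 3 (integrality and ranges).} This is the step I expect to be the main obstacle, because it is where the slack in the stated constants gets spent. I would take $h=\lceil\sqrt{k/(2\alpha)}\,\rceil$ and verify four things.
\begin{itemize}
\item Rounding is harmless. From $\alpha k\le1/16$ we get $\sqrt{k/(2\alpha)}\ge\sqrt{8}\,k\ge 2.8$, so rounding up inflates $h$ by a factor at most about $1.36$. This increases $\alpha h^2$ by at most a bounded multiple of $k$ and decreases $\log(k/h)$. The constant bases $2e$, $4e$ and $16e^3$ should absorb the change, and I would check each inequality by bounding $h^2\le k/(2\alpha)\,(1+1/\sqrt8)^2$.
\item $h\ge k$, which is the same computation: $\sqrt{k/(2\alpha)}\ge\sqrt8\,k\ge k$.
\item $h\le n-k$. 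This follows from $h\le \sqrt{k/\alpha}+1\le 1/(4\alpha)+1$ together with $\alpha n\ge16$ and $k<n/10$.
\item The side condition $\alpha h\le m-1$ of Corollary~\ref{cor:quadratic}. We have $\alpha h\le\sqrt{\alpha k/2}+\alpha\le 1/4+1/16\le1\le m-1$.
\end{itemize}
Finally, the hypothesis $k<(n-1)/2$ of Lemma~\ref{lem:HS-amplification} follows from $k\le K<n/10$.
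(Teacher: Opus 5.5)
\textbf{Gap in the energy estimate.} Your operator bound, your cube bound (via \eqref{eq:cubic-amplification}, with lots of room), and all the range checks ($k\le h\le n-k$, $\alpha h\le m-1$, $k<(n-1)/2$) are fine. The energy estimate \eqref{eq:optimized-energy}, however, does not go through as written.

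You bound $\binom nk\le(en/k)^k$ and $\binom hk\ge(h/k)^k$ separately, which loses a factor $e^k$, and the constants then do not fit. Absorbing the wing factor via $2\le2^k$ gives base $2\sqrt2\,e^{3/2}\approx12.68$ for $h^2=k/(2\alpha)$, and $2e^2\approx14.78$ for your fallback $h^2=k/\alpha$. Both exceed $4e\approx10.87$ even before any rounding loss, so ``comparable'' is not enough and neither choice ``fits''.

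For $k\ge2$ you could rescue this by absorbing $2\le2^{k/2}$ instead. That gives base $2e^{3/2}\approx8.96$, still below $4e$ after your rounding. At $k=1$, though, no choice of $h$ works with your estimate:
\[
\min_{h>0}\,2e^{\alpha h^2}\,\frac{en}{h}=2\sqrt2\,e^{3/2}\,n\sqrt\alpha>4e\,n\sqrt\alpha .
\]

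\textbf{The missing ingredient.} Bound the ratio directly,
\[
\frac{\binom nk}{\binom hk}=\prod_{j=0}^{k-1}\frac{n-j}{h-j}\le\Bigl(\frac n{h-k+1}\Bigr)^k,
\]
which avoids the $e^k$ loss. This is what the paper does. Take $r=\sqrt{k/\alpha}\ge4k$ and $h=\lfloor r\rfloor$. Then $e^{\alpha h^2}\le e^k$, $h\ge r/2$ and $h-k+1\ge r/2$. So \eqref{eq:energy-amplification} gives
\[
2p^{-1}\bigl(2en\sqrt{\alpha/k}\bigr)^k\le p^{-1}\bigl(4en\sqrt{\alpha/k}\bigr)^k .
\]
The operator bound is $p^{-1}e^k(2k/r)^k$, and the paper gets the cube bound as the product $\norm{T_f}_k^4\norm{E_kf}_2^2$. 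With the ratio bound, your own choice $h=\lceil\sqrt{k/(2\alpha)}\,\rceil$ would also work, including at $k=1$.
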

\begin{proof}
For this level choose $h=\lfloor\sqrt{k/\alpha}\rfloor$ and put
$r=\sqrt{k/\alpha}$. Then $r\ge4k$,
$h\ge r/2\ge k$, and $h-k+1\ge r-k\ge r/2$.
Also $h\le\sqrt{n/\alpha}\le n/4\le n-k$ and
$\alpha h\le\sqrt{\alpha k}\le1/4\le m-1$. Corollary~\ref{cor:quadratic}
and truncation give $\mathcal B_h\le p^{-1}e^{\alpha h^2}\le p^{-1}e^k$.
Use
\[
 \binom hk\ge(h/k)^k,\qquad
 \frac{\binom nk}{\binom hk}
 =\prod_{j=0}^{k-1}\frac{n-j}{h-j}
 \le\left(\frac n{h-k+1}\right)^k.
\]
Inserting these in Lemma~\ref{lem:HS-amplification} gives
\eqref{eq:optimized-op} and \eqref{eq:optimized-energy}, with
$2\le2^k$ absorbing the two wings. Their product gives
\eqref{eq:optimized-cube}. The marked order is allowed to depend on $k$;
all estimates concern the same density $f$.
\end{proof}

\section{Two regular blocks and one actual smoothing block}\label{sec:three-block}
We now assemble the proof using actual consecutive shuffle blocks.  It is
useful to keep the roles distinct:
\begin{itemize}[leftmargin=2em]
\item a \emph{long block} of length $t$ sweeps is run long enough both to lower
its entropy and to smooth every marked-card order needed later;
\item from each long block we analyze only its high-probability regular
component $f$, while paying at most $\eta$ for the exceptional component;
\item a \emph{short block} of length $u$ sweeps is kept as its actual density
$G=q_u$.  It is not truncated and need not satisfy an entropy target.  Its
only job is to provide one more low-level contraction.
\end{itemize}
Under our convolution convention the regular component to estimate is
$g=G*f*f$.  Because the original blocks are powers of the same sweep kernel,
the corresponding actual law is exactly $q_u*q_t*q_t=q_{2t+u}$.  The proof
below never reorders $G,f,f$ after the two truncations.

The parameters in Theorem~\ref{thm:v4-finite} have the following roles.
The number $H$ is a logarithmic $L^2$ budget for $f$; $K=\lfloor H\rfloor$ is
the Fourier level at which we switch from ordered-tuple control to the
high-level dimension bound.  The quantities $\alpha$ and $\beta_0$ measure the
remaining marked-card interaction in the long and short blocks, respectively.
Finally
$\rho=16e^3n\alpha\sqrt{\beta_0K}$ is the common ratio of the low-level
geometric series.  The theorem simply requires $\rho<1$ and makes every error
term explicit.

The high-level dimension estimate is
\begin{equation}\label{eq:high-dimension}
 \norm{T_f}_{>K}\le\norm f_2(eK/n)^{K/2},\qquad1\le K<n/10.
\end{equation}
This is the external estimate \eqref{eq:external-high-level}; see \cite[Theorem 4.4 and Lemma 4.5]{KeevashLifshitz2023}.
Every probability convolution contracts every central isotypic level.

\begin{theorem}[Asymmetric finite criterion]\label{thm:v4-finite}
Choose $0<\eta<1/3$, $H\ge2$, $B=\eta H\in[1,n]$, and
\[
 H\ge{1\over e\eta}+\log{1\over1-\eta},\qquad K=\lfloor H\rfloor<n/10.
\]
Let $m_a,m_b\ge2$ be integers, $\alpha=\alpha_{d,m_a}$,
$\beta_0=\alpha_{d,m_b}$, and
\[
 t=\max\{R_*(B),2m_a\},\qquad u=2m_b.
\]
Suppose $\alpha n,\beta_0n\ge16$, $\alpha K,\beta_0K\le1/16$, and
\[
 \rho=16e^3n\alpha\sqrt{\beta_0K}<1.
\]
Then
\begin{equation}\label{eq:v4-finite-TV}
 \norm{q_{2t+u}U-U}_{\TV}\le2\eta+\frac12\left[
 (1-\eta)^{-2}{\rho\over1-\rho}
                 +e^{4H}(eK/n)^K\right]^{1/2}.
\end{equation}
The physical time is exactly $d(2t+u)$ ordinary Thorp shuffles.
\end{theorem}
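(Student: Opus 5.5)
The plan is to write the law after $2t+u$ sweeps as an exact mixture, isolate one regular component, and bound that component in $L^2$ level by level. Since $\Ent(q_t)\le B$ by Proposition~\ref{prop:v4-clock} (because $t\ge R_*(B)$ and entropy is nonincreasing under further sweeps), Lemma~\ref{lem:new-truncate} gives $q_t=pf+(1-p)b$ with $p\ge1-\eta$. With $\log M=(B+1/e)/\eta=H+1/(e\eta)$, the hypothesis $H\ge 1/(e\eta)+\log(1/(1-\eta))$ yields $\log\norm f_2^2\le\log M+\log(1/p)\le 2H$. Expanding $q_{2t+u}=G*q_t*q_t$ bilinearly gives
\[
q_{2t+u}=p^2\,g+(1-p^2)\,(\text{probability density}),\qquad g=G*f*f,
\]
and $1-p^2\le2\eta$. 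Hence $\norm{q_{2t+u}U-U}_{\TV}\le 2\eta+\norm{gU-U}_{\TV}\le 2\eta+\tfrac12\norm{g-1}_2$ by \eqref{eq:TV-L2}. It then remains to show $\norm{g-1}_2^2\le(1-\eta)^{-2}\rho/(1-\rho)+e^{4H}(eK/n)^K$.

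Next I would decompose $g-1$ orthogonally by levels. Level zero consists of the constant and the sign. The constant part of $g-1$ vanishes. The sign component of each factor should also vanish: every sweep layer multiplies the sign by an independent factor with mean zero, since swaps occur with probability $1/2$ on each of $n/2$ edges. This expectation passes to $f$ only approximately, however, because truncation may destroy the exact vanishing. I expect to handle this by noting that the sign coefficient is killed by $G$ alone, as $G$ is an untruncated actual block. Then $\widehat g(\sgn)=\widehat G(\sgn)\widehat f(\sgn)^2=0$.

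For the high levels $>K$, I would write $E_{>K}g=T_G T_f E_{>K} f$ (up to the order fixed by the convolution convention). This gives $\norm{E_{>K}g}_2\le \norm{T_f}_{>K}\norm f_2\le \norm f_2^2 (eK/n)^{K/2}$ by \eqref{eq:high-dimension}, together with the fact that $T_G$ is a contraction. Squaring yields $e^{4H}(eK/n)^K$.

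For each level $1\le k\le K$, I would bound
\[
\norm{E_kg}_2\le\norm{T_G}_k\,\norm{T_f}_k\,\norm{E_kf}_2 .
\]
The factor $\norm{T_G}_k^2\le(2e\sqrt{\beta_0k})^k$ comes from Proposition~\ref{prop:optimized-order}, applied with $p=1$ and $F=f=G=q_u$, $u=2m_b$ (no truncation is needed). The factor $\norm{T_f}_k^2\le p^{-1}(2e\sqrt{\alpha k})^k$ comes from \eqref{eq:optimized-op}, and $\norm{E_kf}_2^2\le p^{-1}(4en\sqrt{\alpha/k})^k$ from \eqref{eq:optimized-energy}. The product is $p^{-2}(16e^3 n\alpha\sqrt{\beta_0 k})^k\le(1-\eta)^{-2}\rho^k$, using $k\le K$. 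Summing the geometric series over $k\ge1$ gives $(1-\eta)^{-2}\rho/(1-\rho)$. The hypotheses of Proposition~\ref{prop:optimized-order} are exactly those assumed: $\alpha n,\beta_0n\ge16$, $\alpha K,\beta_0K\le 1/16$, $K<n/10$, and $t\ge 2m_a$. The physical-time statement follows from \eqref{eq:clock-conversion}.

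The main obstacle is the operator bookkeeping. The isotypic projections $E_k$ commute with the left and right convolution operators. I also need to check that the operator norm on level $k$ bounds the composition regardless of whether the convolution acts on the left or the right; otherwise the operator bound must be justified via the Fourier matrices $\widehat g(\lambda)=\widehat G(\lambda)\widehat f(\lambda)\widehat f(\lambda)$ in some order, combined with $\norm{ABC}_{\HS}\le\norm A_{\rm op}\norm B_{\rm op}\norm C_{\HS}$ and Plancherel \eqref{eq:plancherel}. I would carry out this Fourier-side computation directly to avoid ambiguity.
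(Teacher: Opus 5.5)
Your proposal is correct and follows the paper's proof essentially step for step. The common steps are: the clock plus truncation giving $\log\norm f_2\le H$ and a regular component of weight $p^2\ge 1-2\eta$; the sign coefficient killed by the untruncated block $G=q_u$; the high levels bounded by $\norm{T_f}_{>K}\norm f_2$; and the low levels bounded by $\norm{T_G}_k\norm{T_f}_k\norm{E_kf}_2\le p^{-1}\rho^{k/2}$ and summed as a geometric series. The ordering issue you flag is harmless, because the isotypic projections $E_k$ commute with convolution on either side, and $\norm{T_G}_k$ and $\norm{T_f}_k$ are the maxima of $\norm{\widehat G(\lambda)}_{\mathrm{op}}$ and $\norm{\widehat f(\lambda)}_{\mathrm{op}}$ over level-$k$ partitions, whatever order the Fourier product takes.
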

\begin{proof}
Let $F=q_t$. The improved clock gives $\Ent(F)\le B$, and the marked
kernel bound holds simultaneously at every required order.
Lemma~\ref{lem:new-truncate} gives $F=pf+(1-p)b$, $p\ge1-\eta$, with
$\log\norm f_2\le H$. Two independent copies have regular component
$f*f$ with weight $p^2$; the other components have mass at most $2\eta$.
Put $G=q_u$, without truncating or imposing an entropy requirement on it.

The proof of Proposition~\ref{prop:optimized-order}, applied to $G$ with
$p=1$ and parameter $\beta_0$, gives
\[
 \norm{T_G}_k^2\le(2e\sqrt{\beta_0 k})^k,\qquad1\le k\le K.
\]
This uses only the marked kernel estimate, not an entropy hypothesis.
For the regular component $g=G*f*f$, the same proposition gives
\begin{align*}
 \norm{E_k g}_2^2
 &\le\norm{T_G}_k^2\norm{T_f}_k^2\norm{E_kf}_2^2\\
 &\le p^{-2}(16e^3n\alpha\sqrt{\beta_0 k})^k.
\end{align*}
Summation over $k\le K$ gives the first term in the square brackets of
\eqref{eq:v4-finite-TV}. For high levels, probability contraction and
\eqref{eq:high-dimension} yield
\[
 \norm{E_{>K}g}_2^2\le\norm{E_{>K}(f*f)}_2^2
           \le\norm f_2^4(eK/n)^K\le e^{4H}(eK/n)^K.
\]
The actual factor $G=q_u$ has sign coefficient zero, because it includes
at least one fair-switch sweep. Therefore $g$ has only its constant
coefficient on level zero. No extra parity-removal sweep is needed.
Orthogonality, $\TV\le\tfrac12L^2$, and the exact mixture prove the
bound. The actual law is $q_u*q_t*q_t=q_{2t+u}$. No factors were
reordered after regularization, and no artificial kernel replaced an
actual block.
\end{proof}

\section{Choice of parameters and optimization of the coefficient}
We first explain the scale before choosing the constants.  Put
$H\approx ne^{-s}$.  Reaching entropy of this order costs
approximately $As^2$ sweeps, where $A=1/(2\log(3/2))$.  On the other hand,
the marked-card bound contains an exponential factor of the form
$\exp\{c d/m\}$, so making it compatible with the cutoff $H\approx ne^{-s}$
requires a smoothing depth $m$ of order $d/s$.  Hence the long-block duration
is essentially
\[
 \max\{As^2,\;c d/s\}.
\]
Balancing the two mechanisms forces $s\asymp d^{1/3}$ and a block length
$\asymp d^{2/3}$ sweeps.  The remaining parameter $z>2$ controls how the
smoothing burden is divided between the two long blocks and the shorter third
block.  The calculation below chooses the value of $z$ that minimizes the
leading constant.

Fix $\eta=\varepsilon/8$. For sufficiently large $d$, set
\begin{align}
 v&=(z_*Ld/A)^{1/3},&
 s&=v+4\log(d+2)+100,\nonumber\\
 H&=ne^{-s},&B&=\eta H,\qquad K=\lfloor H\rfloor,\nonumber\\
 m_a&=2+\left\lceil{z_*L(d+1)\over2v}\right\rceil,&
 m_b&=2+\left\lceil{z_*L(d+1)\over2(z_*-2)v}\right\rceil.
 \label{eq:v4-parameters}
\end{align}
Let $t,u$ be as in Theorem~\ref{thm:v4-finite}. Because $s=o(d)$, all
entropy and norm-budget domains eventually hold. The two marked parameters
satisfy
\[
 \alpha\le{8Ld\over n}e^{v/z_*},\qquad
 \beta_0\le{8Ld\over n}e^{(z_*-2)v/z_*}.
\]
Both are at least $8Ld/n$, and both multiplied by $K$ tend to zero.
The low-level ratio is bounded by
\begin{align*}
 \rho&\le16e^3(8Ld)^{3/2}
  \exp\left\{{v\over z_*}+{(z_*-2)v\over2z_*}-{s\over2}\right\}\\
 &=16e^3(8Ld)^{3/2}e^{(v-s)/2}=O(d^{-1/2}).
\end{align*}
For the high-level term, $H/2\le K\le H$ eventually, so
\[
 4H+K\log(eK/n)\le H(4.5-s/2)\longrightarrow-\infty.
\]
The right side of \eqref{eq:v4-finite-TV} is thus eventually less than
$\varepsilon$.

The times satisfy
\begin{align*}
 R_*(B)&=Av^2+O_\varepsilon(v\log(d+2)+\log^2(d+2)),\\
 2m_a&={z_*Ld\over v}+O(1)=Av^2+O(1),\\
 u=2m_b&={z_*Ld\over(z_*-2)v}+O(1)
                          ={Av^2\over z_*-2}+O(1).
\end{align*}
Taking the genuine maximum for $t$ and adding both long blocks and the
short block gives
\[
 d(2t+u)=\left(2+{1\over z_*-2}\right)
   A^{1/3}(z_*L)^{2/3}d^{5/3}
                  +O_\varepsilon(d^{4/3}\log(d+2)).
\]
This proves Theorem~\ref{thm:main-intro}. Group translation gives every
initial deck. Finitely many dimensions are absorbed into the error-term
constant: identity has positive sweep probability and hypercube-edge
transpositions belong to sweep support and generate $S_n$.

\paragraph{Why this particular algebraic number?}
Within this two-equal-regular-block, one-actual-block family, any fixed
$z>2$ yields the coefficient
\[
 A^{1/3}L^{2/3}F(z),\qquad F(z)=z^{2/3}\left(2+{1\over z-2}\right).
\]
One has
\[
 F'(z)={4z^2-17z+12\over3z^{1/3}(z-2)^2}.
\]
Since $F$ diverges at both ends of $(2,\infty)$, its unique minimizer is
$z_*=(17+\sqrt{97})/8$. This is optimization only within this stated
family, not an optimality assertion for Thorp mixing. The actual shorter
block has asymptotic length $1/(z_*-2)=0.7374048168\ldots$ times that of
each regular block.

\section{Pitfalls, limitations, and the remaining gap}\label{sec:limitations}
Two shortcuts that might appear to improve the argument are invalid and are
worth recording.  First, unordered subsets cannot replace ordered tuples in
Section~\ref{sec:tuple-amplification}; the exact multiplicity
$\binom hk f^\mu$ is an ordered-tuple statement.  Second, a uniform two-sweep
pointwise estimate of the form $n!q_2(\sigma)\le e^{Cn}$ with an absolute
constant $C$ is false.  To see the latter, choose a matching of $n/8$ cube
edges and activate exactly those switches in both sweeps.  Greedy counting
gives at least
\[
 \frac{(nd/4)^{n/8}}{(n/8)!}\ge (2d)^{n/8}
\]
such choices.  Each selected edge is traversed twice and returns its two cards,
so the resulting permutation is the identity.  Since two sweeps have $n^n$
equiprobable switch configurations,
\[
 \log(n!q_2(\id))\ge \frac n8\log(2d)-n.
\]
This obstruction is only pointwise; it neither rules out an $O(n)$ entropy
bound after two sweeps nor yields a $d^{5/3}$ lower bound.

The method still balances a quadratic entropy-preparation cost in a parameter
$s$ against a marked-card smoothing cost of order $d/s$ sweeps.  Their balance
produces the power $d^{2/3}$ in sweep count, hence $d^{5/3}$ ordinary Thorp
shuffles.  Improving the exponent therefore appears to require a genuinely
stronger entropy clock, a stronger marked-card smoothing estimate, or a new
finishing mechanism.  No cutoff statement, limiting profile, or matching lower
bound is proved here.

\appendix
\section*{Detailed analytic inputs}
\addcontentsline{toc}{section}{Detailed analytic inputs}
For completeness, we now prove the shuffle-specific analytic statements used
in the main argument.  The only external negative-dependence input is (E1) in
Section~\ref{subsec:external}; all remaining estimates below are derived
explicitly.

\section{Collision graphs and inclusion probabilities}
\label{sec:graph}
A sweep is a network of $d$ matching layers.  Once all switch bits in that
network are fixed, every input vertex follows one deterministic path through
the network.  Pulling each physical matching edge backwards along those paths
creates a graph on the \emph{input} vertices.  Two marked input cards collide
in the sweep exactly when their two input vertices form an edge of this pulled-
back graph.  The next lemmas formalize this picture.

Let $\Omega_k$ be the ordered distinct $k$-tuples of vertices, let
$N_k=(n)_k=n(n-1)\cdots(n-k+1)$, and give $\Omega_k$ uniform probability
$\pi_k$. Let $P_k$ be the transition matrix of one sweep on these tuples.

\begin{lemma}[Exact labelled routing weight]\label{lem:routing}
For $x,y\in\Omega_k$, the one-sweep route is either infeasible or its
collision count $C(x,y)$ is determined by the endpoints and
\begin{equation}\label{eq:routing}
 P_k(x,y)=2^{-kd+C(x,y)}=n^{-k}2^{C(x,y)}.
\end{equation}
A collision is a matching edge containing two marked cards immediately
before its layer.
\end{lemma}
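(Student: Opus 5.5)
We prove Lemma~\ref{lem:routing} by tracking the $k$ marked cards through the $d$ layers of the sweep one layer at a time, exposing only the switch bits the marked cards actually use. Fix $x,y\in\Omega_k$.

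\textbf{Step 1: the route is forced.} In layer $i$ a card can change only coordinate $i$ of its position. After layers $1,\ldots,i$, marked card $j$ therefore sits at the vertex whose first $i$ coordinates agree with $y_j$ and whose remaining coordinates agree with $x_j$. So if the sweep is to carry $x$ to $y$, the position of every marked card before and after every layer is determined by $(x,y)$ alone. In particular, the set of active edges that contain two marked cards immediately before a layer is determined, and hence so is the collision count $C(x,y)$.

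\textbf{Step 2: count the bit constraints layer by layer.} In layer $i$, consider an active edge holding exactly one marked card. That card's required move (flip coordinate $i$ or not) fixes the edge's switch bit uniquely, which costs a factor $1/2$.

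Now consider an edge holding two marked cards $j\ne j'$ at the endpoints $v$ and $v\oplus e_i$. The prescribed next positions either send the pair to $\{v,v\oplus e_i\}$ as a swap or as a stay, which is a single condition on one bit (factor $1/2$), or they are inconsistent, in which case the route is infeasible.

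\textbf{Step 3: use independence.} Edges not touched by marked cards impose nothing. The switch bits on distinct edges and in distinct layers are independent fair bits, so the conditions multiply. The total number of independent one-bit conditions is the number of (card, layer) incidences, namely $kd$, minus one for each doubly occupied edge. Hence
\[
 P_k(x,y)=2^{-(kd-C(x,y))}=n^{-k}2^{C(x,y)},
\]
using $n=2^d$.

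\textbf{Main obstacle.} The only delicate point is checking that a doubly occupied edge imposes exactly one condition when the route is feasible. Both cards' targets must be consistent with a single bit value: the swap sends $v\mapsto v\oplus e_i$ and $v\oplus e_i\mapsto v$ simultaneously. Since each card's target position in layer $i$ is fixed by $y$, there is at most one bit value that works for the edge. If one card demands a swap and the other demands a stay, then the distinctness of the tuple positions is violated, and the route is infeasible. Otherwise the bit is determined once, and the two cards are handled together, saving exactly one factor of $2$.
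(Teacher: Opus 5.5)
Your proof is correct and follows the paper's argument essentially step for step. You use the forced intermediate positions $(y_{<i},x_{\ge i})$, one fair switch per visited edge, and $k-c_i$ visited edges at layer $i$, which gives $kd-\sum_i c_i$ independent conditions. The conflicting case is infeasible because it would place two marked cards at the same vertex after layer $i$, which a bijective layer cannot do.
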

\begin{proof}
Before layer $i$, a card with initial vertex $x_a$ and terminal vertex $y_a$
must be at $(y_{a,<i},x_{a,\ge i})$. Thus every marked path is prescribed.
An inconsistent prescription is infeasible. Otherwise it prescribes one
fair switch for each visited edge. If $c_i$ edges are doubly occupied at
layer $i$, there are $k-c_i$ visited edges at that layer. Consequently the
number of independent prescribed switches is $kd-\sum_i c_i$. Unvisited
switches impose no condition.
\end{proof}

Fix a realization $\omega$ of all switches in a sweep. Let $T_i$ map input
vertices to their positions immediately before layer $i$. Pull that layer's
matching back by $T_i^{-1}$ and take the union over $i$; denote the resulting
graph by $G_\omega$.

\begin{lemma}[Uniform collision-graph geometry]\label{lem:graph}
For every $\omega$, the graph $G_\omega$ is simple and $d$-regular. The number
of collisions of a marked input set $A$ is exactly $e_{G_\omega}(A)$.
Every nonempty vertex set $B$ satisfies
\begin{equation}\label{eq:iso}
 e_{G_\omega}(B)\le\frac{|B|}{2}\log_2|B|.
\end{equation}
The same statements hold in the reverse coordinate order.
\end{lemma}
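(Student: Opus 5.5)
Fix $\omega$ throughout. The argument rests on one structural fact: immediately before layer $i$ the position map $T_i$ acts as a controlled perturbation of the identity. Specifically, every switch used so far acts on coordinates $<i$, so $T_i(v)$ and $v$ agree in coordinates $\ge i$.

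\textbf{Simplicity and $d$-regularity.} The pulled-back layer $T_i^{-1}(M_i)$ is a perfect matching of the input vertices, because $T_i$ is a bijection and $M_i$ is a perfect matching. The graph $G_\omega$ is the union of these $d$ matchings. It is $d$-regular precisely when the matchings are pairwise edge-disjoint, so the real task is to rule out repeated edges. Suppose $\{u,v\}$ appears in layers $i<j$. Being matched in layer $i$ means $T_i(u)$ and $T_i(v)$ differ exactly in coordinate $i$. Since $T_i$ preserves coordinates $\ge i$, the inputs $u,v$ agree in all coordinates $>i$ and differ in coordinate $i$. In layer $j$ the same reasoning gives that $u,v$ agree in coordinates $>j$ and differ in coordinate $j$. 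But $i<j$, so the layer-$i$ conclusion forces $u,v$ to agree at coordinate $j$, which contradicts the layer-$j$ conclusion. Hence no edge repeats. Loops are impossible because matching edges join distinct vertices.

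\textbf{Collision count.} A collision at layer $i$ is a matching edge both of whose endpoints are marked cards just before layer $i$. Pulling this back through $T_i^{-1}$ gives an edge of $T_i^{-1}(M_i)$ with both endpoints in $A$. Summing over $i$, and using the edge-disjointness just proved, the total number of collisions equals $e_{G_\omega}(A)$.

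\textbf{Isoperimetric bound.} I would prove \eqref{eq:iso} by induction on $d$ via a recursive structure of $G_\omega$. First I would record the structure within a sweep: for $j\ge2$, a layer-$j$ edge of $G_\omega$ joins inputs $u,v$ that agree in coordinates $>j$ and differ in coordinate $j$. The sweep restricted to coordinates $1,\dots,d-1$ acts separately on the two halves $\{x_d=0\}$ and $\{x_d=1\}$, since the earlier layers never change coordinate $d$. So the edges from layers $1,\dots,d-1$ form two disjoint copies of collision graphs of $(d-1)$-dimensional sweeps, one on each half. The edges from the last layer form a perfect matching between the two halves.

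Now write $B=B_0\cup B_1$ according to the halves, with $|B_0|=a$ and $|B_1|=b$. By induction,
\[
 e(B)\le \tfrac a2\log_2a+\tfrac b2\log_2b+\min(a,b).
\]
The standard hypercube inequality $\tfrac a2\log_2a+\tfrac b2\log_2b+\min(a,b)\le\tfrac{a+b}2\log_2(a+b)$, which holds for all $a,b\ge0$ and is exactly what proves the classical hypercube edge-isoperimetric bound, then closes the induction. The base case $d=1$ is immediate.

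The reverse coordinate order follows by the identical argument with coordinates relabelled. I expect the main subtlety to be stating the recursive decomposition precisely, namely that the layer-$d$ matching pulled back through $T_d$ really does cross between the halves as a matching; this holds because $T_d$ preserves coordinate $d$.
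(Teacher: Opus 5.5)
Your proof is correct and is essentially the paper's argument: the same ``highest differing initial coordinate is $i$'' observation gives simplicity, $d$-regularity and the collision identity. Your induction on the top coordinate is the paper's suffix-tree telescoping unrolled, and your closing inequality is exactly the paper's $\min(a,b)\le\frac{a+b}{2}h_2\bigl(\frac a{a+b}\bigr)$. The paper justifies that inequality in one line from concavity of $h_2$, namely $h_2(p)\ge 2\min(p,1-p)$, rather than citing it as standard.
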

\begin{proof}
Every pulled-back matching has degree one. Endpoints of a layer-$i$ edge
have highest differing initial coordinate $i$, because previous layers
changed only lower coordinates. Thus edges from distinct layers do not
repeat. The collision identity is the definition of pullback.

Fix the suffix above coordinate $i$. Let $a,b$ be the numbers of vertices of
$B$ having that suffix and having bit $i$ equal to zero or one. Earlier
layers do not change these counts, so at most $\min(a,b)$ of the layer-$i$
matching edges connect two such vertices. For binary entropy $h_2$,
\[
 \min(a,b)\le\frac{a+b}{2}h_2\!\left(\frac a{a+b}\right),
\]
with the empty-node contribution defined as zero. This follows from
$h_2(p)\ge2\min(p,1-p)$. Summing over the suffix tree telescopes to
$|B|\log_2|B|/2$, since its terminal nonempty leaves have size one. Reverse
the suffix tree for the reverse-order sweep.
\end{proof}

\begin{lemma}[One-sweep inclusion bound]\label{lem:inclusion}
Start from any deterministic $k$-set, and let $A$ be its occupied set after
one sweep. Then, for every $B\subseteq V$,
\begin{equation}\label{eq:inclusion}
 \Prb(B\subseteq A)\le(k/n)^{|B|}.
\end{equation}
This also holds for a reverse-order sweep.
\end{lemma}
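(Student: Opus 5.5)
The plan is to use the strong Rayleigh input (E1). First I will show that each single vertex has occupation probability at most $k/n$. Then negative association turns this into the product bound over $B$.

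The occupied set $A$ is the image of the initial $k$-set $S$ under the random permutation produced by one sweep. A sweep is a composition of $d$ coordinate layers, and each layer is a product of independent commuting transpositions on disjoint edges, each applied with probability $1/2$. The law of the occupied set is therefore obtained from the point mass $\delta_S$ by a finite sequence of partial symmetrizations $\mu\mapsto\frac12\mu+\frac12\tau\mu$, one for each edge $\tau$ of each layer, in layer order.

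\begin{itemize}[leftmargin=2em]
\item Within a layer the edges are disjoint, so applying the independent swaps one at a time reproduces exactly the product randomization.
\item A point mass on a set is strongly Rayleigh, since its generating polynomial is a monomial $\prod_{v\in S}z_v$, which is stable.
\item By Theorem~4.20 of \cite{BorceaBrandenLiggett2009}, the law of $A$ is strongly Rayleigh. By Theorem~4.9 it is negatively associated.
\end{itemize}

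Next comes the one-vertex marginal. For a fixed vertex $v$,
\[
 \Prb(v\in A)=\sum_{x\in S}\Prb(\text{card starting at }x\text{ ends at }v).
\]
Each term equals $1/n$, because one marked card is exactly uniform on $Q_d$ after a full sweep. This is the induction over the $d$ layers recorded in Section~\ref{subsec:clock}, and it is also the $k=1$ case of Lemma~\ref{lem:routing}: every endpoint is feasible with no collisions, so $P_1(x,y)=n^{-1}$. Hence $\Prb(v\in A)=k/n=:\theta$.

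Finally, negative association gives the product bound. Write $B=\{v_1,\dots,v_b\}$ and use that the indicators $\ind_{v\in A}$ are increasing functions of disjoint coordinates. The function $\prod_{i<b}\ind_{v_i\in A}$ is increasing and depends only on $\{v_1,\dots,v_{b-1}\}$, so
\[
 \Prb(B\subseteq A)\le\Prb(\{v_1,\dots,v_{b-1}\}\subseteq A)\,\Prb(v_b\in A).
\]
Induction on $b$ then yields $\theta^{|B|}=(k/n)^{|B|}$. The case $B=\emptyset$ is trivial.

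The reverse-order sweep is the same argument with the layers applied in the opposite coordinate order. That sequence is again a sequence of partial symmetrizations, and one-card uniformity still holds because each coordinate is refreshed exactly once.

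The main point needing care is the first step: checking that the occupied-set law really is built by iterated partial symmetrizations with parameter $\theta=1/2$. The switches act on positions, and the occupied-set law only records which positions are filled, so each layer transforms the set-valued law by exactly the product of independent edge symmetrizations. The rest follows directly from (E1).
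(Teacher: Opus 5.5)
Your proof is correct and follows essentially the same route as the paper. Both arguments start from the stable monomial of the initial point mass, preserve the strongly Rayleigh property through the successive edge partial symmetrizations (Borcea--Br\"and\'en--Liggett, Theorem~4.20), obtain negative association from Theorem~4.9, get the one-vertex marginal $k/n$ from one-card uniformity, and then iterate negative association. Your induction using the increasing function $\prod_{i<b}\ind_{v_i\in A}$ simply spells out the paper's phrase ``applied repeatedly to distinct occupation indicators.''
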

\begin{proof}
The generating polynomial of a deterministic set is a monomial and is
stable: it is nonzero when all variables have positive imaginary parts.
A measure with stable multiaffine generating polynomial and nonnegative
coefficients is strongly Rayleigh. Averaging a measure with its image under
a transposition is partial symmetrization. By \cite[Theorem~4.20]{BorceaBrandenLiggett2009}, it
preserves this property, and \cite[Theorem~4.9]{BorceaBrandenLiggett2009} supplies negative
association. Apply these statements to the independent edge swaps in
succession.

One labelled card is exactly uniform after a sweep: every coordinate
becomes a fresh fair bit at its sole update. Thus every vertex has
occupation probability $k/n$. Negative association, applied repeatedly to
distinct occupation indicators, proves \eqref{eq:inclusion}. If $|B|>k$ the
left side is zero. The argument is independent of the coordinate order.
\end{proof}

\section{A sharper unrestricted collision moment}
\label{sec:moment}
The preceding section reduced marked-card dependence to the random variable
$e_G(A)$: the number of edges induced by the occupied marked set $A$ in a fixed
collision graph $G$.  We now bound an exponential moment of this count.  The
proof expands the product over edges, groups selected edges into connected
components, and overcounts each component by an embedded spanning tree.  This
is the step that turns cube geometry plus negative dependence into a usable
analytic bound.

The next result uses only the inclusion bound, not the full
strong-Rayleigh property.

\begin{theorem}[Connected edge-set moment]\label{thm:moment}
Let $G$ be a simple graph on $n$ vertices, of maximum degree at most $d$,
satisfying \eqref{eq:iso}. Let $A$ be a random set of exactly $k\ge2$ vertices
satisfying \eqref{eq:inclusion}. For $\lambda>0$, define
\begin{equation}\label{eq:a}
 a=4d\frac{k}{n}(1-2^{-\lambda})(2k)^{\lambda/2}.
\end{equation}
If $a<1$, then
\begin{equation}\label{eq:moment}
 \E\,2^{\lambda e_G(A)}\le
 \exp\!\left(\frac{ka}{1-a}\right).
\end{equation}
\end{theorem}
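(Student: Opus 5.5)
Write $\gamma=2^\lambda-1$ and expand the moment as a product over edges. For each edge $e\in E(G)$ we have $2^{\lambda\ind[e\subseteq A]}=1+\gamma\ind[e\subseteq A]$, so
\[
 \E\,2^{\lambda e_G(A)}=\sum_{S\subseteq E(G)}\gamma^{|S|}\,\Prb(V(S)\subseteq A)
 \le\sum_{S}\gamma^{|S|}(k/n)^{|V(S)|},
\]
where the inequality is \eqref{eq:inclusion}. If $|V(S)|>k$, the probability is zero, so only edge sets spanning at most $k$ vertices contribute. Decompose $S$ into connected components $S_1,\dots,S_r$ with disjoint vertex sets. The weight $\gamma^{|S|}(k/n)^{|V(S)|}$ factorizes over components. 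Dropping the disjointness constraint therefore overcounts, and gives
\[
 \E\,2^{\lambda e_G(A)}\le\sum_{r\ge0}\frac1{r!}\Bigl(\sum_{S\ \text{connected}}\gamma^{|S|}(k/n)^{|V(S)|}\Bigr)^r
 =\exp\Bigl(\sum_{S\ \text{conn}}\gamma^{|S|}(k/n)^{|V(S)|}\Bigr).
\]
(A cleaner route, which also respects the support restriction $|V(S)|\le k$, is to first sum over a vertex set $W$ with $|W|\le k$ of the cluster; the exponential bound is the same.) It therefore suffices to show
\[
 \sum_{S\text{ connected}}\gamma^{|S|}(k/n)^{|V(S)|}\le \frac{ka}{1-a}.
\]

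Organize the connected clusters by their vertex set $W$, with $|W|=j+1\le k$ and $j\ge1$, and sum over all connected spanning edge sets of $G[W]$. There are two steps.
\begin{enumerate}[label=(\roman*)]
\item \emph{Edge excess.} A connected $S$ on $W$ has $|S|=j+(|S|-j)$, and
\[
 \sum_{S\subseteq E(G[W])}\gamma^{|S|}\le (1+\gamma)^{e_G(W)}=2^{\lambda e_G(W)}\le 2^{\lambda(j+1)\log_2(j+1)/2}=(j+1)^{\lambda(j+1)/2}
\]
by \eqref{eq:iso}. Since a connected cluster on $W$ contains a spanning tree, we instead bound the sum by choosing a spanning tree $T\subseteq S$ (contributing $\gamma^{j}$) and then an arbitrary set of extra edges. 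The extra edges cost at most $2^{\lambda e_G(W)}$ in total, giving a factor $(j+1)^{\lambda(j+1)/2}\le(2k)^{\lambda (j+1)/2}$. The form of $a$ suggests the sharper per-vertex bound $(2k)^{\lambda/2}$ for each vertex beyond the root; I would aim for this by charging the surplus to vertices via \eqref{eq:iso}.
\item \emph{Counting trees.} The number of trees with $j$ edges in a graph of maximum degree $d$ containing a given root vertex is at most $(ed)^j\le(4d)^j$ by the standard Galton--Watson/Catalan bound. Summing over the $n$ root choices and inserting $(k/n)^{j+1}$ gives the term $n\cdot(k/n)\cdot[4d(k/n)\gamma(2k)^{\lambda/2}]^j$.
\end{enumerate}

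Finally use $\gamma=2^\lambda-1\le 2^\lambda(1-2^{-\lambda})$. The stray factor $2^{\lambda j}$ is absorbed into the $(2k)^{\lambda/2}$ allowance, which is exactly why the definition of $a$ in \eqref{eq:a} has $2k$ rather than $k$. The $j$-th term is then $\le k\,a^j$, and summing the geometric series over $j\ge1$ gives $ka/(1-a)$. This yields \eqref{eq:moment}.

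The main obstacle is step (i): getting the extra-edge surplus to cost only $(2k)^{\lambda/2}$ per added vertex, uniformly in the size of the cluster, rather than the global factor $2^{\lambda e_G(W)}$. I would first try the bound $e_G(W)-j\le \tfrac{j+1}2\log_2(j+1)$ together with $\sum_{S\supseteq T}\gamma^{|S|-j}\le 2^{\lambda(e_G(W)-j)}$, and check that the exponents and the $(1-2^{-\lambda})$ factor match the definition of $a$. If the constants do not close, the fallback is to weight trees by the surplus directly in the tree count.
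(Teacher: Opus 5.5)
\textbf{There is a genuine gap in the accounting that is supposed to produce $a$.} Your skeleton matches the paper's: edge expansion, splitting into connected components with the exponential overcount, organizing clusters by vertex set $W$ with $|W|\le k$, overcounting by spanning trees, and counting trees by $n(ed)^j\le n(4d)^j$. But the bookkeeping does not close, and the step you call the main obstacle is left open.

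In step (i) you bound the extra edges by $2^{\lambda e_G(W)}$, which discards the factor $(1+\gamma)^{-j}$. At the end you propose to absorb the resulting stray factor $2^{\lambda j}$, coming from $\gamma=2^\lambda(1-2^{-\lambda})$, into the $(2k)^{\lambda/2}$ allowance. That would require $2^{\lambda j}(j+1)^{\lambda(j+1)/2}\le(2k)^{\lambda j/2}$, which is false. For $j=1$ it needs $k\ge 8$. For $j=k-1$ the left side exceeds the right by the factor $2^{\lambda(k-1)/2}k^{\lambda/2}>1$, for every $k$. Your fallback has the same defect: bounding $e_G(W)-j$ by $\tfrac{j+1}{2}\log_2(j+1)$ throws the $-j$ away again. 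So as written the argument produces neither the factor $1-2^{-\lambda}$ nor the exponent $j$ (rather than $j+1$) on $(2k)^{\lambda/2}$.

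\textbf{The fix.} Both issues disappear once you keep the exact weight of a spanning tree plus arbitrary extra edges:
\[
 \gamma^{j}(1+\gamma)^{e_G(W)-j}=\Bigl(\frac{\gamma}{1+\gamma}\Bigr)^{j}(1+\gamma)^{e_G(W)}=(1-2^{-\lambda})^{j}\,2^{\lambda e_G(W)} .
\]
\begin{itemize}[leftmargin=2em]
\item The $2^{\lambda j}$ cancels exactly. This cancellation, not absorption, is where the factor $1-2^{-\lambda}$ in $a$ comes from.
\item Apply \eqref{eq:iso} to $e_G(W)$ itself, with $v=j+1=|W|$, to get $2^{\lambda e_G(W)}\le v^{\lambda v/2}$.
\item Use $v\le 2^{v-1}$, i.e.\ $v^{v/(v-1)}\le 2v\le 2k$, to obtain $v^{\lambda v/2}\le(2k)^{\lambda(v-1)/2}$. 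This one-line conversion from exponent $v$ to $v-1$ is what the $2k$ in $a$ pays for. No charging of the surplus to individual vertices is needed.
\end{itemize}
Combined with your tree count, the clusters of size $v$ contribute at most $n(k/n)\,[4d(k/n)(1-2^{-\lambda})(2k)^{\lambda/2}]^{v-1}=k\,a^{v-1}$. Summing over $2\le v\le k$ gives $ka/(1-a)$.

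The restriction $|W|\le k$ is essential for the step $2v\le 2k$. So the unrestricted sum over all connected $S$ that you first display as ``sufficient'' is not the right target; the restricted version you mention parenthetically is.
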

\begin{proof}
Put $t=2^\lambda-1>0$ and $\theta=k/n$. Expand over subsets of edges:
\begin{equation}\label{eq:edge-expansion}
 2^{\lambda e_G(A)}
 =\prod_{e\in E(G)}\bigl(1+t\ind_{\{e\subseteq A\}}\bigr)
 =\sum_{F\subseteq E(G)}t^{|F|}\ind_{\{V(F)\subseteq A\}},
\end{equation}
where $V(F)$ is the set of endpoints of the chosen edges. Every nonzero term
has $|V(F)|\le k$. For $B\subseteq V$ with $2\le |B|\le k$, define
\[
 w(B)=\sum_{\substack{F\subseteq E(G[B])\\(B,F)\text{ connected}}}t^{|F|}.
\]
Decomposing $F$ into its connected components, using the inclusion bound
on their disjoint union, and then allowing all collections, overlaps, and
repetitions in an exponential series yields
\begin{equation}\label{eq:component-sum}
 \E\,2^{\lambda e_G(A)}
 \le\exp\!\left(\sum_{2\le|B|\le k}\theta^{|B|}w(B)\right).
\end{equation}
More explicitly, before the last enlargement the sum is over unordered
pairwise disjoint collections of connected vertex sets. Its summand is
the product of their weights $\theta^{|B|}w(B)$. All terms are nonnegative,
so the enlargement is an upper bound. It asserts no independence of
components.

If $|B|=v$ and $(B,F)$ is connected, $F$ contains a spanning tree. Summing
over all possible spanning trees can only overcount, so
\begin{equation}\label{eq:tree-overcount}
 w(B)\le\sum_{T\text{ spanning tree of }G[B]}
 t^{v-1}(1+t)^{e_G(B)-(v-1)}.
\end{equation}
The total number of pairs $(B,T)$ with $|B|=v$ is at most
$n(4d)^{v-1}$. To see this, encode a rooted plane version of $T$ by its root,
one of at most $4^{v-1}$ rooted plane tree shapes, and a neighbor index,
with at most $d$ choices, for each tree edge. For example, choose the
smallest-labelled root and a fixed ordering of children to assign one
encoding to every pair. Such an encoding determines the embedded tree,
so it is a valid overcount of \emph{tree pairs}, not merely of vertex sets.

By \eqref{eq:iso}, $(1+t)^{e_G(B)}\le v^{\lambda v/2}$. Also, for $v\ge2$,
\[
 v^{v/(v-1)}\le2v\le2k,
 \qquad
 v^{\lambda v/2}\le(2k)^{\lambda(v-1)/2}.
\]
Here $v\le2^{v-1}$ proves the first inequality. Therefore the exponent in
\eqref{eq:component-sum} is at most
\begin{align*}
 \sum_{v=2}^k n(4d)^{v-1}\theta^v
 \left(\frac t{1+t}\right)^{v-1}v^{\lambda v/2}
 &\le k\sum_{v=2}^k
 \left[4d\theta(1-2^{-\lambda})(2k)^{\lambda/2}\right]^{v-1}\\
 &\le\frac{ka}{1-a}.
\end{align*}
This proves \eqref{eq:moment}.
\end{proof}

\begin{remark}[The gain and the independence condition]
The preceding proof retains one factor $1-2^{-\lambda}$ for every edge of
a spanning tree. The old vertex-component bound omitted these factors,
giving $a_{\rm old}=4d(k/n)(2k)^{\lambda/2}$. Thus the new theorem is stronger
and remains useful as $\lambda\downarrow0$.

In the shuffle application, $A$ is produced by the first sweep, and $G$ is
the collision graph of an independent second sweep. One may fix the entire
second network first. Its independence of $A$ preserves
\eqref{eq:inclusion}, and the theorem is uniform over every network. There
is no restriction to a low-collision event.
\end{remark}

\section{From collision moments to a pointwise heat kernel}
\label{sec:kernel}
The collision moment is an average bound.  The final representation argument
needs a pointwise bound on the marked-card transition kernel.  We obtain it by
viewing two sweeps as an operator $K=P_k^2$, proving matching row and column
$L^p$ estimates, and then interpolating from $L^1$ to $L^\infty$ in $m$
steps.  Reverse-coordinate sweeps supply the column estimate; no reversibility
of a complete sweep is assumed.

\begin{theorem}[Finite-parameter marked-card smoothing]\label{thm:smoothing}
Let $m\ge2$ be an integer and $2\le k\le n$. Put
\begin{equation}\label{eq:am}
 a_{d,k,m}=4d\frac{k}{n}
 \bigl(1-2^{-1/(m-1)}\bigr)(2k)^{1/[2(m-1)]}.
\end{equation}
If $a_{d,k,m}<1$, then, for all ordered distinct endpoints $x,y$,
\begin{equation}\label{eq:kernel}
 N_kP_k^{2m}(x,y)\le
 \exp\!\left(\frac{(m-1)k a_{d,k,m}}{1-a_{d,k,m}}\right).
\end{equation}
In particular the sufficient condition
\begin{equation}\label{eq:base-condition}
 8(\log2)d\frac{k}{n}(2k)^{1/[2(m-1)]}\le1
\end{equation}
implies $N_kP_k^{2m}(x,y)\le e^k$. Both conclusions hold for inverse noise.
For $k=1$, the normalized kernel is already one after a single sweep.
\end{theorem}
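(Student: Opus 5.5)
The plan is to work with the two-sweep tuple operator $K=P_k^2$ and prove $L^{1+\lambda}$ bounds for its rows and its columns, with $\lambda=1/(m-1)$. These are then interpolated, over $m$ factors of $K$, into the pointwise bound for $P_k^{2m}=K^m$.

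\textbf{Row estimate.} I first convert the collision moment of Theorem~\ref{thm:moment} into a row $L^{1+\lambda}$ bound for $K$. Write $K(x,y)=\sum_zP_k(x,z)P_k(z,y)$. By Jensen, using that $P_k(x,\cdot)$ is a probability vector,
\[
K(x,y)^{1+\lambda}\le\sum_zP_k(x,z)P_k(z,y)^{1+\lambda}.
\]
For the second sweep, Lemma~\ref{lem:routing} gives $P_k(z,y)^{\lambda}=n^{-k\lambda}2^{\lambda C(z,y)}$ on feasible routes. Fix the second network $\omega$. Then $C$ equals $e_{G_\omega}(A)$, where $A$ is the occupied set of $z$ (Lemma~\ref{lem:graph}). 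Hence
\[
\sum_yP_k(z,y)^{1+\lambda}=n^{-k\lambda}\,\E_\omega 2^{\lambda e_{G_\omega}(\{z\})}.
\]
Averaging over $z\sim P_k(x,\cdot)$ makes $A$ the output set of one independent sweep. Lemma~\ref{lem:inclusion} supplies \eqref{eq:inclusion}, Lemma~\ref{lem:graph} supplies \eqref{eq:iso} and $d$-regularity, and Theorem~\ref{thm:moment}, applied uniformly in $\omega$, gives
\[
\E_{\pi_k}\bigl(N_kK(x,\cdot)\bigr)^{1+\lambda}\le (N_k/n^k)^{\lambda}\exp\!\Bigl(\frac{ka}{1-a}\Bigr)\le\exp\!\Bigl(\frac{ka}{1-a}\Bigr),
\]
with $a=a_{d,k,m}$ exactly as in \eqref{eq:am}.

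\textbf{Column estimate.} The same bound must hold for columns $N_kK(\cdot,y)$. I obtain it by running the identical argument for the inverse noise. Reading a sweep backwards is a reverse-coordinate sweep, and Lemmas~\ref{lem:graph} and \ref{lem:inclusion} were stated for that order too. This is exactly why no reversibility of a full sweep is needed.

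\textbf{Interpolation.} Let $M=\exp(ka/((1+\lambda)(1-a)))$ be the common bound on the $L^{1+\lambda}(\pi_k)$ norms of normalized rows and columns. By Minkowski, the row bound says $K$ maps normalized $L^1$ densities into $L^{1+\lambda}$ with norm $\le M$. Since $K$ is a contraction on $L^1$ and $L^\infty$, Riesz--Thorin interpolation then makes $K$ map $L^{q}$ into $L^{q'}$ with constant $M^{\theta}$, where the exponents gain a fixed amount $\lambda$ in reciprocal scale at each step. The column bound enters through duality, giving the same estimate for the adjoint.

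I would choose a chain of exponents $1=q_0<q_1<\dots$ so that $m$ applications of $K$, split between a forward chain from $x$ and a backward chain from $y$ meeting in the middle, carry the point mass at $x$ from $L^1$ to $L^\infty$. The cost should be $M^{(1+\lambda)(m-1)}$ in total. Since $\lambda=1/(m-1)$, this yields $N_kK^m(x,y)\le\exp((m-1)ka/(1-a))$, which is \eqref{eq:kernel}.

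\textbf{Main obstacle.} The main difficulty is this exponent bookkeeping: making the Hölder/Riesz--Thorin chain close up exactly at $L^\infty$ after $m$ factors, with total cost $(m-1)$ times the single-step exponent. If a clean chain is awkward, I would instead pair the factors symmetrically. One writes
\[
N_kK^m(x,y)=\bigl\langle N_kK^{j}(x,\cdot),\,N_kK^{m-j}(\cdot,y)\bigr\rangle_{\pi_k}
\]
and bounds each half in $L^{r}$ and $L^{r'}$ by induction on $j$, using the row estimate on one side and the column estimate on the other.

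\textbf{Remaining assertions.} The sufficient condition \eqref{eq:base-condition} together with $(m-1)(1-2^{-1/(m-1)})\le L$ gives $a\le 1/(2(m-1))$. Hence $(m-1)a/(1-a)\le1$, so the bound is at most $e^k$. For $k=1$ the kernel is already uniform after one sweep, by one-card uniformity.
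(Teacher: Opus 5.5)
Your row and column estimates are correct and coincide with the paper's. Jensen over the first sweep, the routing weight $P_k(z,y)^\lambda=n^{-k\lambda}2^{\lambda C(z,y)}$, Theorem~\ref{thm:moment} applied to the collision graph of the independent second sweep, and the reverse-coordinate sweep for columns give $\norm{N_kK(x,\cdot)}_{L^p(\pi_k)}\le M$ and $\norm{N_kK(\cdot,y)}_{L^p(\pi_k)}\le M$, with $p=1+\lambda=m/(m-1)$.

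\textbf{The gap is the interpolation step.} You leave it open, and the mechanism you sketch cannot close it. Interpolating the row bound $L^1\to L^p$ against the trivial $L^\infty\to L^\infty$ contraction gives $L^r\to L^s$ with $1/s=(1-1/m)/r$ and constant $M^{1/r}$. So the reciprocal exponent decreases by a factor $1-1/m$ at each step, not by a fixed amount. (The gain of the row bound itself is $\lambda/(1+\lambda)=1/m$, not $\lambda$.) No finite chain of this kind reaches $L^\infty$.

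Your meet-in-the-middle and symmetric-pairing variants use row information only on the $x$ side and column information only on the $y$ side. Put $q=1-1/m$. Then the best these schemes give is $N_kK^j(x,\cdot)\in L^{s}$ with $1/s=q^j$ and $N_kK^{m-j}(\cdot,y)\in L^{s'}$ with $1/s'=q^{m-j}$. H\"older on the pairing needs $q^j+q^{m-j}\le1$. This holds for $m=2$, but already fails for $m=3$ ($2/3+4/9>1$). It also fails for every large $m$, since the minimum over $j$ is about $2e^{-1/2}>1$.

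\textbf{The missing idea} is to use the two nontrivial bounds as the two endpoints of a single Riesz--Thorin interpolation for the same operator. Minkowski with the row bound gives $\norm{K}_{L^1\to L^{m/(m-1)}}\le M$. H\"older with the column bound gives $\norm{K}_{L^{m}\to L^\infty}\le M$. Both endpoints have the same reciprocal gain $1/m$ and the same constant $M$. Interpolation therefore yields $\norm{K}_{L^r\to L^s}\le M$ whenever $1/r-1/s=1/m$ and $1\le r\le m$, with no $M^\theta$ bookkeeping.

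Iterating along $r_j=m/(m-j)$, $j=0,\dots,m$, moves $L^1$ to $L^\infty$ in exactly $m$ factors. Hence $N_k\max_{x,y}K^m(x,y)=\norm{K^m}_{L^1\to L^\infty}\le M^m=\exp\bigl((m-1)ka/(1-a)\bigr)$. Your final count $M^{(1+\lambda)(m-1)}=M^m$ is the right number, but only this two-sided interpolation actually produces it. The remaining assertions are handled correctly: the base condition via $(m-1)(1-2^{-1/(m-1)})\le L$, and the case $k=1$.
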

\begin{proof}
Set $p=m/(m-1)$, $p'=m$, $\lambda=p-1$, $N=N_k$ and $K=P_k^2$.
Jensen's inequality over the first-sweep transition law gives
\begin{align}
 \frac1N\sum_y(NK(x,y))^p
 &\le\sum_zP_k(x,z)\frac1N\sum_y(NP_k(z,y))^p\nonumber\\
 &=\sum_zP_k(x,z)\sum_yP_k(z,y)(NP_k(z,y))^{p-1}\nonumber\\
 &\le\E_x2^{(p-1)C_2}
 \le\exp\!\left(\frac{k a_{d,k,m}}{1-a_{d,k,m}}\right),
 \label{eq:row-bound}
\end{align}
where $C_2$ is the second sweep's collision count. The penultimate inequality
uses Lemma~\ref{lem:routing} and $Nn^{-k}\le1$. The last uses
Theorem~\ref{thm:moment}, with the independent future graph as just explained.

Every coordinate-layer operator is self-adjoint, so $P_k^*$ is the actual
reverse-order sweep. Repeating \eqref{eq:row-bound} for $(P_k^*)^2$ gives the
identical column $L^p(\pi_k)$ bound. With
\[
 M=\exp\!\left(\frac{k a_{d,k,m}}{p(1-a_{d,k,m})}\right),
\]
Minkowski and H\"older therefore give, respectively,
$\norm K_{L^1\to L^p}\le M$ and
$\norm K_{L^{p'}\to L^\infty}\le M$.
Finite-dimensional Riesz--Thorin interpolation gives
\[
 \norm K_{L^r\to L^s}\le M,
 \qquad \frac1s=\frac1r-\frac1m,
 \qquad 1\le r\le m.
\]
Iterate at $r_j=m/(m-j)$, $j=0,\ldots,m$, with $r_0=1$ and
$r_m=\infty$. All $m$ applications lie in the permitted range. Hence
\[
 \norm{K^m}_{L^1\to L^\infty}\le M^m
 =\exp\!\left(\frac{(m-1)k a_{d,k,m}}{1-a_{d,k,m}}\right).
\]
For a nonnegative matrix on uniform probability space,
$\norm{K^m}_{L^1\to L^\infty}=N\max_{x,y}K^m(x,y)$; a normalized point mass
attains the maximum. This proves \eqref{eq:kernel}.

For the constant-base assertion, write
$c_*=4Ld(k/n)(2k)^{1/[2(m-1)]}$. The elementary bound
$(m-1)(1-2^{-1/(m-1)})\le L$ gives
$a_{d,k,m}\le c_* /(m-1)$. If $c_*\le1/2$, then
\[
 \frac{(m-1)a_{d,k,m}}{1-a_{d,k,m}}
 \le\frac{c_*}{1-c_* /(m-1)}\le1.
\]
This is precisely \eqref{eq:base-condition}. Reverse sweeps prove the
inverse claim. No reversibility of the sweep kernel is assumed.
\end{proof}

\section{Endpoint geometry and the two entropy losses}
\label{sec:information}
This is the geometric heart of the entropy argument.  The idea is to reveal
the endpoints of a fresh sweep from the largest input index downward.  Even
after higher endpoints are known, the next endpoint is uniform on an explicit
candidate set.  Inside that set we construct a symmetric random partner.  When
we forget the detailed endpoint information, the partner's preimage is exactly
uniform among the earlier input positions.  This produces two complementary
ways to certify that the sweep erased information.

This section supplies the shuffle-specific information inequalities needed
for the improved entropy clock. They apply to an arbitrary input law, not
only to a law started from the identity.

Let $X$ be one independent sweep as a position permutation, let $\mu$ be a
random arrangement mapping labels to positions, and put $Y=X\circ\mu$.
The entropy loss is
\begin{equation}\label{eq:delta}
 \Delta=\Ent(\mu)-\Ent(Y)=I(Y;X),
\end{equation}
because the Shannon entropy of $Y$ given $X$ equals that of $\mu$.
Index positions by the integers $0,\ldots,n-1$, using their bit expansions.
Set
\[
 \mathcal H_j=\sigma(X(i):i>j),\qquad A_j=X(j),
\]
and let $s(j)$ be the number of one-bits of $j$.

\begin{lemma}[Candidates and a symmetric partner]\label{lem:candidates}
Given $\mathcal H_j$, $A_j$ is uniform on a measurable candidate set $C_j$
of size $2^{s(j)}$, containing no endpoint of a higher input. For $j\ge1$
there is a measurable symmetric stochastic matrix $Q_j$ on $C_j$, with
zero diagonal, such that a draw $B_j$ from $Q_j(A_j,\cdot)$ using
independent auxiliary randomness satisfies
\begin{equation}\label{eq:uniform-partner}
 K_j:=X^{-1}(B_j)<j,
 \qquad \Prb(K_j=k)=1/j\quad(0\le k<j).
\end{equation}
The displayed probabilities average the fresh sweep. They are not
conditional probabilities given $\mathcal H_j$.
\end{lemma}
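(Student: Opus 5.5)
The plan is to work in the butterfly picture of Lemma~\ref{lem:routing}. Before layer $i$, the card from input $x$ sits at $(y_{<i},x_{\ge i})$. The layer-$i$ edge through this vertex is shared with exactly one other card, whose input agrees with $x$ above coordinate $i$, differs at coordinate $i$, and has lower bits fixed by the earlier switches; this is the suffix tree in the proof of Lemma~\ref{lem:graph}. Fix the bit ordering so that coordinate $i$ is bit $i-1$ of the integer index; this is consistent with "earlier layers change only lower coordinates". Then, at layer $i$, the partner of input $j$ has a \emph{larger} index when bit $i$ of $j$ is $0$ and a \emph{smaller} index when it is $1$.

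\textbf{Candidate set.} The first step determines the candidate set. Condition on $\mathcal H_j$, which fixes the endpoints of all inputs $i>j$. At a layer where $j$ has bit $0$, the partner input is larger, and I will show inductively that the switch on $j$'s current edge is already fixed. The partner's whole prefix path lies in the sub-butterfly of inputs sharing its high bits, and its endpoint's coordinate $i$ records this switch relative to the partner's known position. At a layer where $j$ has bit $1$, all cards that can share $j$'s edge at that layer come from smaller inputs, so the switch is still a fresh fair bit independent of $\mathcal H_j$. Coordinate $i$ of $A_j$ is decided at layer $i$ and never touched again. Hence $A_j$ equals a $\mathcal H_j$-measurable base point XOR a uniform vector supported on the $s(j)$ free coordinates. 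This makes $C_j$ a subcube coset of size $2^{s(j)}$. Since $X$ is a bijection, $A_j$ cannot be the endpoint of a higher input, so $C_j$ automatically avoids those endpoints. Measurability is clear because $C_j$ is read off from finitely many revealed endpoints.

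\textbf{Partner matrix.} Next I construct $Q_j$. Decompose $\{0,\dots,j-1\}$ into dyadic blocks $\mathcal K_i$ according to the highest bit $i$ where $j$ has a $1$ and $k$ has a $0$; then $|\mathcal K_i|=2^{i-1}$ and $\sum_i|\mathcal K_i|=j$. Define $Q_j(a,a\oplus e_i)=2^{i-1}/j$ for each free coordinate $i$, and $0$ otherwise. This matrix is symmetric, has zero diagonal, and is stochastic on $C_j$ because the free coordinates are exactly the one-bits of $j$. It remains to prove \eqref{eq:uniform-partner}, namely that after a flip at level $i$, the card occupying $B_j=A_j\oplus e_i$ comes from $\mathcal K_i$, and that its index is uniform on $\mathcal K_i$ after averaging over the sweep. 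Given this, mixing over $i$ with weights $2^{i-1}/j$ gives $\Prb(K_j=k)=1/j$.

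\textbf{Main obstacle.} This last claim is where I expect the difficulty. The position $A_j\oplus e_i$ is reached by the card that sat at the other end of $j$'s layer-$i$ edge, but only if the later switches along that card's continuation reproduce $j$'s later bits, and in general they need not. My first attempt is a pathwise involution: toggle the layer-$i$ switch on $j$'s edge together with the subsequent switches on the two exchanged trajectories. This is a measure-preserving bijection of switch configurations. It exchanges which of the two cards lands at $A_j$ versus $A_j\oplus e_i$, and it leaves all higher inputs' relevant switches unchanged, so the averaged law is unaffected. Under it, membership $X^{-1}(A_j\oplus e_i)\in\mathcal K_i$ becomes the statement that the layer-$i$ partner of $j$ lies in $\mathcal K_i$, which is immediate from the bit structure. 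Uniformity within $\mathcal K_i$ then follows from one-card uniformity applied in reverse to the $(i-1)$-layer sub-butterfly feeding that edge: averaging over the lower switches, the card arriving at a fixed edge end is uniform among the $2^{i-1}$ inputs of $\mathcal K_i$. If the involution argument fails to be measure-preserving jointly with the auxiliary draw, the fallback is to redefine $B_j$ directly as the endpoint of the layer-$i$ partner. I would then check by the same symmetry that its conditional law given $(\mathcal H_j,A_j)$ is a symmetric kernel on $C_j$.
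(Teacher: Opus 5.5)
Your count of the candidate set is essentially right: at a layer where $j$ has a one-bit, its edge is never visited by a higher input, so that switch is fresh given $\mathcal H_j$. However, $C_j$ is \emph{not} a subcube coset, and the partner construction built on that picture fails.

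\textbf{The candidate set is not a subcube coset.} At a zero-bit layer $i$ of $j$, the revealed higher endpoints fix the switch on every layer-$i$ edge of $j$'s sub-butterfly. But which edge $j$ uses depends on the free lower coordinates of $A_j$, so the determined coordinates are functions of the lower free ones, not a fixed base point. Take $d=2$, $j=1$, with layer-$2$ switches $\zeta_0,\zeta_1$; both are revealed by $X(2),X(3)$. Then $C_1=\{(0,\zeta_0),(1,\zeta_1)\}$. When $\zeta_0\ne\zeta_1$, the point $A_1\oplus e_1$ is the endpoint of input $2$ or $3$. So your $Q_1$ is not a matrix on $C_1$, and $K_j<j$ fails.

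\textbf{The weights do not give a uniform $K_j$.} Even where $C_j$ is the whole cube, matching flip direction $i$ with the block $\mathcal K_i$ is false. For $d=2$, $j=3$, the card at $A_3\oplus e_1$ is input $2$ only when the two layer-$2$ switches agree. Your weights therefore give $\Prb(K_3=2)=\tfrac13\cdot\tfrac12=\tfrac16$ instead of $\tfrac13$.

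The involution cannot repair this, since a measure-preserving map does not change the law of $X^{-1}(A_j\oplus e_i)$. For a free coordinate $i$, toggling only $j$'s layer-$i$ switch does exchange $X(j)$ with the endpoint $X(p)$ of the layer-$i$ partner $p$, and it preserves $\mathcal H_j$; this shows $X(p)\in C_j$. But $X(p)\ne A_j\oplus e_i$ in general, because its later free coordinates are fresh switches on different edges. Toggling the later switches as well would change higher endpoints at zero-bit layers.

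\textbf{The fallback loses the required independence.} Setting $B_j:=X(p)$ does give uniform $K_j$. But it is not a draw from an $\mathcal H_j$-measurable kernel at $A_j$ with independent auxiliary randomness, because $X(p)$ depends on unrevealed switches. That conditional independence is exactly what Lemma~\ref{lem:losses} uses when it asserts that the auxiliary draw conveys no extra information about $Y$. Resampling $B_j$ independently from its conditional law does not rescue it. For $d=2$, $j=3$, that law puts $2/3$ on $a\oplus e_2$ and $1/6$ on each of $a\oplus e_1$ and $a\oplus e_1\oplus e_2$, which again yields $\Prb(K_3=2)=1/6$.

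\textbf{The missing idea} is the paper's recursive half-cube kernel. For $j=a+i$:
\begin{itemize}
\item With probability $(a-i)/(a+i)$, flip the top bit. This finds the lower card on $j$'s final edge, uniform over all $a$ lower inputs.
\item Otherwise, move by the smaller kernel $Q_i$ in the upper half and choose the top bit by an independent fair coin. This lands, with probability $1/2$ each, on the upper card (uniform over the $i$ earlier upper inputs, by induction) or on a lower card (uniform over all $a$ lower inputs, by independence of the lower half-sweep).
\item For lower inputs, transport the smaller kernel through the known final bijection.
\end{itemize}
Each lower input then receives total mass $(a-i)/((a+i)a)+i/((a+i)a)=1/(a+i)$ from the two branches. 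For $j=3$ this kernel is uniform on the other three vertices. Uniformity comes from this mixing, not from a one-to-one correspondence between flip directions and dyadic blocks.
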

\begin{proof}
Induct through a cube of size $2a$. Its first $d-1$ layers are independent
sweeps on the lower and upper halves, and its last layer matches equal
lower coordinates across the halves. Write an upper input as $j=a+i$.
The exposed higher upper endpoints reveal the corresponding smaller upper
endpoints and the final switch bits at their distinct used coordinates.
Each prescribed pattern of those bits has probability
$2^{-(a-1-i)}$, independently of the smaller routing. It therefore does not
bias that routing. The unexposed endpoint of $a+i$ is uniform on
$C_i^{\rm upper}\times\{0,1\}$.

For a lower input, all upper endpoints have been exposed. They reveal the
entire smaller upper routing and every final matching bit. The last layer
is now a known bijection from lower smaller coordinates to the remaining
endpoints. It transports the candidate set from the smaller lower sweep.
This proves the candidate assertion, including measurability, by induction
starting with a one-vertex cube.

For lower inputs transport the smaller partner kernel by that known
bijection. For $j=a+i$, from $(x,b)$ choose $(x,1-b)$ with probability
$(a-i)/(a+i)$. With the remaining probability $2i/(a+i)$, choose $y$
according to the smaller $Q_i(x,\cdot)$, draw a fair independent bit $c$,
and choose $(y,c)$. If $i=0$ only the first branch is used. The resulting
kernel is symmetric, stochastic, and loop-free.

In the first branch the input at the other endpoint is uniform among the
$a$ lower inputs, by independence and one-card uniformity of the smaller
lower sweep. In the second branch the fair bit chooses an upper or lower
input with probability $1/2$. The upper input is uniform among the $i$
earlier upper inputs by induction; the lower input is uniform among the
$a$ lower inputs by its independent sweep. Consequently each lower input
has probability
\[
 \frac{a-i}{(a+i)a}+\frac{i}{(a+i)a}=\frac1{a+i},
\]
and each earlier upper input has probability
$(2i/(a+i))/(2i)=1/(a+i)$. All candidates exclude the higher inputs, and
the diagonal is excluded, so $K_j<j$ holds pointwise. This establishes
\eqref{eq:uniform-partner} and completes the induction.
\end{proof}

For each $j$, let
$\mathcal G_{j+1}=\sigma(\mu^{-1}(i):i>j)$. On an atom of this field, the
remaining label set has size $\ell=j+1$. Write $p_k$ for the conditional
law of $\mu^{-1}(k)$ on that atom, for every $k\le j$. All $p_k$ in a row
of the argument use this \emph{same} conditioning field; the matrix
$(p_k(a))$ on the remaining labels is doubly stochastic. Let $u_\ell$ be
uniform on those labels. The chain rules give
\begin{equation}\label{eq:chain}
 \Ent(\mu)=\sum_{j=1}^{n-1}\E D(p_j\Vert u_{j+1}),
 \qquad
 \Delta=\sum_{j=1}^{n-1}I(Y;A_j\mid\mathcal H_j).
\end{equation}
The omitted $j=0$ terms are zero.

Define, for a distribution $p$ on $\ell$ points,
\[
 V(p)=1-\frac1\ell\left(\sum_a\sqrt{p(a)}\right)^2,
 \qquad S=\sum_{j=1}^{n-1}\E V(p_j).
\]

\begin{lemma}[Two separate losses]\label{lem:losses}
There are deterministic probability weights $W_{jk}$, $k\le j$, with
$W_{00}=1$ and, for $j=a+i$ where $a$ is the largest power of two at most
$j$,
\begin{equation}\label{eq:W}
 W_{jk}=\frac1{2a}\ (k<a),
 \qquad W_{j,a+k}=\frac12W_{ik}\ (k\le i),
\end{equation}
such that, with $q_j=\sum_{k\le j}W_{jk}p_k$,
\begin{equation}\label{eq:two-losses}
 \Delta\ge L S,
 \qquad
 \Delta\ge\mathcal C:=\sum_{j=1}^{n-1}\E D(p_j\Vert q_j).
\end{equation}
The two lower bounds in \eqref{eq:two-losses} are not added.
\end{lemma}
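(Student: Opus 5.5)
The plan is to work one input index $j$ at a time through the chain rule \eqref{eq:chain}, $\Delta=\sum_j I(Y;A_j\mid\mathcal H_j)$, and to lower-bound each summand by data processing. The key observation is what $Y$ reveals. Given $\mathcal H_j$, the endpoints of all inputs $>j$ are known, so $Y$ determines the labels of those inputs, i.e.\ an atom of $\mathcal G_{j+1}$. Moreover $Y$ reveals the label sitting at every candidate $c\in C_j$. By Lemma~\ref{lem:candidates}, each $c\in C_j$ is the endpoint of some input $X^{-1}(c)\le j$. Exactly one of them, $A_j$, carries the label $\mu^{-1}(j)$, whose law on the atom is $p_j$; the others carry labels of earlier inputs $k<j$, with laws $p_k$. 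Since $A_j$ is uniform on $C_j$ given $\mathcal H_j$, the summand is the information that the observed label configuration on $C_j$ gives about which candidate is the ``$j$-slot''. I would first condition additionally on $\mathcal G_{j+1}$ (allowed, since $\mathcal G_{j+1}$ is a function of $Y$ and $\mathcal H_j$) and then apply both bounds on each atom.

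For $\Delta\ge\mathcal C$, I would derive the weights from a uniformly chosen candidate. Let $U$ be uniform on $C_j$, independent of everything else. The law of the label at $U$, averaged over the fresh sweep, is $\sum_k \Prb(X^{-1}(U)=k)\,p_k$. I would verify that $\Prb(X^{-1}(U)=k)$ equals $W_{jk}$ by the same induction on the cube used in Lemma~\ref{lem:candidates}. For $j=a+i$ we have $C_j=C_i^{\rm upper}\times\{0,1\}$. The bit $0$ (probability $1/2$) lands on an endpoint of a uniform lower input, giving weight $1/(2a)$ on each $k<a$. The bit $1$ lands recursively in the upper half, giving $\tfrac12 W_{ik}$ on $a+k$; this includes $k=i$, i.e.\ input $j$ itself. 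This reproduces \eqref{eq:W}. Then
\[
 I(Y;A_j\mid\mathcal H_j)\ \ge\ \E\, D\bigl(\mathcal L(\text{label at }A_j)\,\Vert\,\mathcal L(\text{label at }U)\bigr)=\E D(p_j\Vert q_j),
\]
using data processing for the map (configuration, slot) $\mapsto$ label at the slot. The comparison with $U$ comes from the variational characterization $I(Y;A)=\min_Q \E D(P_{Y\mid A}\Vert Q)$ applied with the particular reference law induced by $U$, followed by joint convexity to pass to the averaged sweep. The point needing care is that the averaging over the sweep happens inside the divergence's second argument, which only decreases the bound in the right direction by convexity.

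For $\Delta\ge LS$, I would use the symmetric partner $B_j$ of Lemma~\ref{lem:candidates}. Because $Q_j$ is symmetric, conditional on the unordered pair $\{A_j,B_j\}$ the slot is equally likely to be either element. The data-processing reduction is therefore to a binary hypothesis test between $p_j\otimes p_{K}$ and $p_K\otimes p_j$, with $K=K_j$ uniform on $\{0,\dots,j-1\}$ by \eqref{eq:uniform-partner}. The mutual information of a fair bit with its observation is at least $\log 2$ times a Hellinger-type affinity defect, namely $L\bigl(1-\sum_{a,b}\sqrt{p_j(a)p_K(b)p_K(a)p_j(b)}\bigr)=L\bigl(1-(\sum_a\sqrt{p_j(a)p_K(a)})^2\bigr)$. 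Averaging over $K$, concavity, and the doubly stochastic identity $\sum_{k\le j}p_k=(j+1)u_{j+1}$ should convert $\frac1j\sum_{k<j}(\sum_a\sqrt{p_jp_k})^2$ into at most $\frac1{j+1}(\sum_a\sqrt{p_j(a)})^2$ plus a term that is handled by including $k=j$. This yields $V(p_j)$.

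I expect this last averaging step to be the main obstacle. One has to check that the uniform law on $j$ earlier inputs, rather than $j+1$, still gives exactly $V(p_j)$. This will likely use that including the $k=j$ term contributes the maximal affinity $1$, so the $1/j$ versus $1/(j+1)$ normalizations match. One also has to check that the binary-information inequality carries the constant $\log 2$ rather than something smaller. The two bounds are derived from the same summand separately, so they are never added.
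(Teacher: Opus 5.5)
Your argument for $\Delta\ge\mathcal C$ is essentially the paper's. The product of the conditional marginals of $(A_j,Y)$ is the law with $A_j$ replaced by an independent uniform $U$ on $C_j$. Data processing under $(a,y)\mapsto y^{-1}(a)$ then gives $D\bigl(p_j\Vert\sum_k w_{jk}(\mathcal H_j)p_k\bigr)$, and convexity of $D(p_j\Vert\cdot)$ replaces $w_{jk}$ by $W_{jk}$. Three small corrections. The lower bound comes from this product-of-marginals identity, not from a formula with $\min_Q$, which gives upper bounds for a fixed $Q$. You need $\mathcal G_{j+1}$ to be independent of $(A_j,\mathcal H_j)$, not merely recoverable from $(Y,\mathcal H_j)$, so that $A_j$ stays uniform on $C_j$ and $(A_j,B_j)$ stays exchangeable after conditioning. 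In the weight check, it is the unrevealed fair final switch at $x\in C_i^{\rm upper}$, not the bit of $U$, that splits the mass evenly between the lower and upper inputs through $x$.

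The genuine gap is in $\Delta\ge LS$, and it has two parts. First, the labels read off at $(A_j,B_j)$ are $\mu^{-1}(j)$ and $\mu^{-1}(K_j)$, two entries of one arrangement. For fixed $k$ their joint law on a $\mathcal G_{j+1}$-atom is a coupling $\pi_{jk}$ of $p_j,p_k$ carried off the diagonal, not $p_j\otimes p_k$, and its affinity with $\pi_{jk}^T$ is not determined by the marginals. For example, with $\ell=3$ and $\mu$ uniform on the three cyclic arrangements, $\pi_{jk}$ and $\pi_{jk}^T$ have disjoint supports although $p_j\otimes p_k$ is symmetric. Second, computing an affinity for each value of $K$ and then averaging gives the information that $\xi$ shares with the observation \emph{together with} $K=X^{-1}(B_j)$. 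For $j\ge2$ this $K$ is in general not determined by $(Y,\mathcal H_j,V_0,V_1,\mathcal G_{j+1})$, and adjoining it can only increase the information. Equivalently, by joint concavity of $\sum\sqrt{PQ}$, the averaged per-$K$ defect dominates the defect of the mixture test that actually arises. So even with the correct joint laws, your reduction bounds a larger quantity than the one you need.

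The missing idea is to keep the $K$-average inside the law and evaluate the mixture exactly. By \eqref{eq:uniform-partner}, $K_j$ is uniform on $\{0,\ldots,j-1\}$ once the sweep is averaged, and it is independent of $\mu$ and $\xi$. On a $\mathcal G_{j+1}$-atom, given $\mu^{-1}(j)=a$, the positions $0,\ldots,j-1$ carry the other $\ell-1=j$ remaining labels, one each. Hence the oriented label pair has law $R_{p_j}$ or $R_{p_j}^T$, where $R_p(a,b)=p(a)/(\ell-1)$ for $a\ne b$ and $R_p(a,a)=0$; this depends on $p_j$ alone. One computes $1-\sum_{a,b}\sqrt{R_p(a,b)R_p(b,a)}=\ell V(p)/(\ell-1)$. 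The two-point inequality \eqref{eq:two-point} says the Jensen--Shannon divergence is at least $L(1-\sum\sqrt{PQ})$, which settles your concern about the constant. It gives $\Phi_\ell(p_j)\ge L\ell V(p_j)/(\ell-1)\ge LV(p_j)$. No $p_k$ with $k<j$ and no doubly stochastic averaging enter, and your $1/j$ versus $1/(j+1)$ issue becomes the harmless factor $\ell/(\ell-1)\ge1$.
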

\begin{proof}
Fix $j$, and abbreviate $\mathcal H_j,\mathcal G_{j+1}$ to
$\mathcal H,\mathcal G$. The field $\mathcal G$ is independent of
$(A_j,\mathcal H)$, and is recoverable from $(Y,\mathcal H)$ because
$\mu^{-1}(i)=Y^{-1}(X(i))$ for $i>j$. The conditional-information chain
rule thus gives
\[
 I(Y;A_j\mid\mathcal H)=I(Y;A_j\mid\mathcal H,\mathcal G).
\]
Adjoin the partner $B_j$ of Lemma~\ref{lem:candidates} and an independent
fair bit $\xi$. Orient $(A_j,B_j)$ as $(V_0,V_1)$ according to $\xi$.
Given $\mathcal H$, symmetry of $Q_j$ and uniformity of $A_j$ make the
pair exchangeable. Thus $\xi$ is independent of
$(\mathcal H,\mathcal G,V_0,V_1)$. The auxiliary draw conveys no extra
information about $Y$ given $A_j,\mathcal H,\mathcal G$. It follows that
\begin{align*}
 I(Y;A_j\mid\mathcal H,\mathcal G)
 &=I(Y;A_j,B_j,\xi\mid\mathcal H,\mathcal G)\\
 &=I(Y;V_0,V_1,\xi\mid\mathcal H,\mathcal G)\\
 &\ge I(Y;\xi\mid\mathcal H,\mathcal G,V_0,V_1)\\
 &=I(Y,\mathcal H,V_0,V_1;\xi\mid\mathcal G)\\
 &\ge I((Y^{-1}(V_0),Y^{-1}(V_1));\xi\mid\mathcal G).
\end{align*}
In the last line the endpoints and their exposing field have been
forgotten. Now \eqref{eq:uniform-partner}, independence of the sweep from
$\mu$, and the fact that $K_j$ ranges uniformly over positions other than
$j$ show that, on an atom of $\mathcal G$, the two oriented label laws are
$R_p$ and $R_p^T$, where $p=p_j$ and
\[
 R_p(a,b)=\frac{p(a)}{\ell-1}\quad(a\ne b),\qquad R_p(a,a)=0.
\]
Indeed, conditional on any initial arrangement with $\mu^{-1}(j)=a$,
a uniform other position carries a uniform one of the remaining labels.
Writing $\bar R_p=(R_p+R_p^T)/2$, the last information is
\[
 \Phi_\ell(p)=\tfrac12D(R_p\Vert\bar R_p)
              +\tfrac12D(R_p^T\Vert\bar R_p).
\]
The scalar inequality
\begin{equation}\label{eq:two-point}
 a\log\frac{2a}{a+b}+b\log\frac{2b}{a+b}
 \ge L(\sqrt a-\sqrt b)^2
\end{equation}
implies
$\Phi_\ell(p)\ge \ell L V(p)/(\ell-1)\ge LV(p)$ after summing pairs.
For completeness, by homogeneity set $b=1$. If $F$ is half the left side
of \eqref{eq:two-point} and $h(x)=(\sqrt x-1)^2$, then
$F''(x)/h''(x)=\sqrt x/(1+x)$ decreases for $x>1$. Both functions and their
first derivatives vanish at one. Successive weighted-average integrations
therefore show that $F/h$ decreases to $L/2$. Symmetry under $x\mapsto1/x$
and continuity prove the remaining cases. Summation in \eqref{eq:chain}
gives the first loss.

For the second loss, compare the conditional joint law of $(A_j,Y)$ with
the product of its marginals and apply the map $(a,y)\mapsto y^{-1}(a)$.
Under the joint law the image has law $p_j$. Under the product law it has
law $\sum_{k\le j}w_{jk}(\mathcal H)p_k$, where
\[
 w_{jk}(\mathcal H)=
 \frac{\Prb(X(k)\in C_j\mid\mathcal H)}{|C_j|}.
\]
Data processing gives the corresponding relative-entropy lower bound.
Convexity in the reference distribution, and independence of $\mathcal G$
from the sweep, replace these random weights by
$W_{jk}=\E w_{jk}$. They are probability weights and exclude higher inputs.

The half-cube decomposition verifies \eqref{eq:W}. For an upper input
$j=a+i$, a lower input falls in the smaller upper candidate coordinates
with probability $|C_i|/a$, by the independent uniform one-card lower
sweep. Dividing by $|C_j|=2|C_i|$ gives $1/(2a)$. Upper inputs inherit half
the smaller weights. Lower inputs inherit the smaller weights through the
known final bijection. Sum over \eqref{eq:chain} to obtain the second loss.
\end{proof}

\section{A dimension-free scalar interpolation inequality}
\label{sec:scalar}
The entropy proof now becomes a one-dimensional comparison between three
quantities for a probability vector: relative entropy from uniformity,
relative entropy from a reference law, and square-root overlap.  The next
lemma is deliberately dimension-free so that its constants do not deteriorate
when the number of still-unrevealed labels is large.

\begin{lemma}[Elementary scalar truncation]\label{lem:scalar-trunc}
For $T\ge4$ and $z\ge0$,
\begin{equation}\label{eq:scalar-trunc}
 z\log z-z+1\le(\log T+2)(\sqrt z-1)^2+z\log^+(z/T),
\end{equation}
with $0\log0=0$.
\end{lemma}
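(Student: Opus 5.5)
The plan is to prove the inequality by splitting at $z=T$, writing $z=w^2$ with $w=\sqrt z\ge0$ and $\ell=\log T\ge\log4$. Throughout, put $\varphi(z)=z\log z-z+1$.

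\emph{Case $z\le T$.} Here $\log^+(z/T)=0$, so the claim is $\varphi(z)\le(\ell+2)(\sqrt z-1)^2$. I would split once more at $z=1$.

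For $1\le z\le T$ it suffices to show $\varphi(z)\le(\log z+2)(\sqrt z-1)^2$, and then use $\log z\le\ell$. Expanding in $w\ge1$, the difference is
\[
 g(w)=(2\log w+2)(w-1)^2-(2w^2\log w-w^2+1)=(w-1)(3w-1)-2(2w-1)\log w .
\]
Insert $\log w\le (w-1)/\sqrt w$ for $w\ge1$. This reduces $g\ge0$ to $k(w)=3w-1-4\sqrt w+2/\sqrt w\ge0$ on $[1,\infty)$. That holds because $k(1)=0$, $k'(1)=0$ and $k''>0$.

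For $0\le z<1$, since $\ell+2>3$ it suffices to show $3(w-1)^2-\varphi(w^2)\ge0$. The left side equals $2(2w-1)(w-1)-2w^2\log w$. Using $-\log w\ge 1-w$, it is at least $(1-w)\bigl[2w^2-2(2w-1)\bigr]=2(1-w)^3\ge0$.

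\emph{Case $z>T$.} Write $\varphi(z)=z\log(z/T)+(z\ell-z+1)$. The first term is exactly $z\log^+(z/T)$, so it remains to show $z\ell-z+1\le(\ell+2)(w-1)^2$. The difference is a quadratic in $w$:
\[
 q(w)=3w^2-2(\ell+2)w+\ell+1 .
\]
Its vertex is at $w=(\ell+2)/3$, which lies below $\sqrt T$ because $\sqrt T\ge2$ and $\ell\le 3\sqrt T-2$. So on $w\ge\sqrt T$ it suffices to check $q(\sqrt T)=3T-2(\log T+2)\sqrt T+\log T+1\ge0$ for $T\ge4$. At $T=4$ this equals about $0.84$. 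Substituting $T=x^2$ with $x\ge2$ turns it into $3x^2-4x\log x-4x+2\log x+1$, whose derivative $6x-4\log x-8+2/x$ is positive for $x\ge2$. Hence the function of $T$ is increasing on $[4,\infty)$, which closes this case. The endpoint $z=0$ is covered by continuity, using the convention $0\log0=0$.

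\emph{Main obstacle.} The main obstacle is the subcase $1\le z\le T$. There the naive bound $\log w\le w-1$ points the wrong way, so I would use the sharper estimate $\log w\le (w-1)/\sqrt w$. Once that is in place, the rest reduces to one-variable convexity and monotonicity checks.
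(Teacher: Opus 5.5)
Your proof is correct and follows the paper's argument essentially step for step. You use the same splits at $z=1$ and $z=T$, the same reduction to the coefficient $\log z+2$ on $[1,T]$, and the same quadratic $3w^2-2(\log T+2)w+\log T+1$ handled by placing its vertex below $\sqrt T$. The paper only varies small details: it checks $g\ge0$ directly from $g(1)=g'(1)=0$ and $g''(w)=2(3w+1)(w-1)/w^2\ge0$ rather than through your bound $\log w\le(w-1)/\sqrt w$, and it uses the sharper constant $2$ on $[0,1]$. Note also that your function $3x^2-4x\log x-4x+2\log x+1$ is exactly $g(x)$, so $q(\sqrt T)\ge0$ is the $z=T$ instance of your middle case and needs no separate verification, which is how the paper argues it.
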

\begin{proof}
For $z\le1$, differentiating in $\sqrt z$ gives
$z\log z-z+1\le2(\sqrt z-1)^2$.
For $1\le z\le T$, put $y=\sqrt z$. The difference between
$(\log z+2)(\sqrt z-1)^2$ and $z\log z-z+1$ is
\[
 3y^2-4y+1+2(1-2y)\log y.
\]
Its value and first derivative vanish at one, and its second derivative is
$2(3y+1)(y-1)/y^2\ge0$. Increasing the coefficient from $\log z+2$ to
$\log T+2$ preserves the inequality. For $z\ge T$, the right side minus
the left side in \eqref{eq:scalar-trunc} is
\[
 3y^2-2(\log T+2)y+\log T+1.
\]
It is nonnegative at $y=\sqrt T$ by the previous case and is increasing
thereafter: $3\sqrt T\ge\log T+2$ for $T\ge4$.
\end{proof}

\begin{lemma}[Uniform scalar estimate]\label{lem:scalar}
Let $p,q$ be distributions on $\ell$ points. Suppose
\[
 \ell^{1/2}\sum_a q(a)^{3/2}\le R,
 \qquad 1\le R\le\ell^{1/2}.
\]
For $V=1-\ell^{-1}(\sum_a\sqrt{p(a)})^2$,
\begin{equation}\label{eq:scalar}
 D(p\Vert u_\ell)
 \le3D(p\Vert q)+\bigl[4\log(R/V)+8\bigr]V.
\end{equation}
The final term is defined as zero when $V=0$.
\end{lemma}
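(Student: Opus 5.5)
The plan is to rerun the proof of Lemma~\ref{lem:v4-scalar} at the fixed exponent $\beta=1/2$, so that $c_\beta=3$, $\theta=\beta/(1+\beta)=1/3$, and the moment hypothesis $\ell^{1/2}\sum_aq(a)^{3/2}\le R$ is exactly the one used there. The adjustable parameter $\delta$ is replaced by a single fixed split of the range of $V$, at $V=1/2$. Because this is an absolute threshold, the loss coming from $\log(1/A_0^2)$ is an absolute constant. I will then check that all constants fit inside $4\log(R/V)+8$. The hypothesis $R\ge1$, together with $V\le1$, gives $\log(R/V)\ge0$, so enlarging coefficients of $\log(R/V)$ is harmless.

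\emph{Small defect, $0<V\le1/2$.} Here $A_0^2=1-V\ge1/2$. Set $f=\ell p$, $h(x)=x\log x-x+1$ and $M=8(R/V)^2$. Then $T=M/A_0^2\ge 8\ge4$, so Lemma~\ref{lem:scalar-trunc} applies and gives
\[
 D(p\Vert u_\ell)\le(\log M+\log 2+2)V+T_1,\qquad T_1=\sum_{f(a)>M}p(a)\log(f(a)/M).
\]
To bound $T_1$, I apply the entropy variational inequality with $\theta=1/3$, and then H\"older with exponents $(3/2,3)$ against $q$. This gives
\[
 \E_q e^{\theta\log^+(f/M)}\le 1+M^{-1/3}R^{2/3}p\{f>M\}^{1/3}.
\]
Since $f\le4(\sqrt f-1)^2$ on $\{f>M\}$, we get $p\{f>M\}\le 8V$. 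With the chosen $M$ the extra term is at most $V$, and therefore $T_1\le 3D(p\Vert q)+3V$. Collecting terms, the coefficient of $V$ is
\[
 2\log(R/V)+\log 8+\log2+2+3\approx 2\log(R/V)+7.77 .
\]
This is at most $4\log(R/V)+8$, which proves \eqref{eq:scalar} in this range.

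\emph{Large defect, $V>1/2$.} Write
\[
 D(p\Vert u_\ell)=D(p\Vert q)+\sum_ap(a)\log(\ell q(a)).
\]
The second sum equals $2\sum_ap\log(\ell q)^{1/2}$. By the variational inequality it is at most
\[
 2D(p\Vert q)+2\log\sum_aq(a)(\ell q(a))^{1/2}\le 2D(p\Vert q)+2\log R.
\]
Hence $D(p\Vert u_\ell)\le3D(p\Vert q)+2\log R$. Since $1/2<V\le1$, the target term satisfies $[4\log(R/V)+8]V\ge 2\log R+4$, so the bound follows. The case $V=0$ means $p$ is uniform and $D(p\Vert u_\ell)=0$; zeros of $p$ or $q$ are handled by continuity or are trivial when $D(p\Vert q)=\infty$.

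The only delicate point is the constant bookkeeping in the small-defect case. The choice of $M$ has to make the H\"older tail term at most $V$ while keeping $\log M+\log(1/A_0^2)+2+3\le 8+4\log(R/V)$. The choice $M=8(R/V)^2$ with threshold $1/2$ leaves some slack: if the tail estimate came out slightly worse, raising $M$ by a constant factor still fits, since the $\log(R/V)$ coefficient is only $2$ against an allowance of $4$. The upper hypothesis $R\le\ell^{1/2}$ is not needed in this argument.
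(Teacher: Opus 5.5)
Your proposal is correct and follows the paper's proof essentially step for step: the same split at $V=1/2$, the same level $M=8(R/V)^2$ fed into Lemma~\ref{lem:scalar-trunc}, the same $\theta=1/3$ variational/H\"older tail bound giving $T_1\le 3D(p\Vert q)+3V$, and the same variational inequality with $\tfrac12\log(\ell q)$ in the large-defect case. The one step you leave implicit is the normalization $D(p\Vert u_\ell)\le\E_{u_\ell}A_0^2h(f/A_0^2)$, which is needed for the $\log 2$ term in your first display; it holds because the difference equals $A_0^2-1-\log A_0^2\ge0$, and the paper states it explicitly.
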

\begin{proof}
Write $f=\ell p$, $A=\E_{u_\ell}\sqrt f=\sqrt{1-V}$, and
$h(z)=z\log z-z+1$. Then
\[
 D(p\Vert u_\ell)\le\E_{u_\ell}A^2h(f/A^2),
\]
because the right side minus the left is $-\log A^2-1+A^2\ge0$.
If $V=0$, $p$ is uniform and \eqref{eq:scalar} is immediate.

First suppose $0<V\le1/2$, so $A^2\ge1/2$. Set
$M=8(R/V)^2$ and apply Lemma~\ref{lem:scalar-trunc} with $T=M/A^2$.
The centered square integrates to $V$, and hence
\begin{equation}\label{eq:smallV}
 D(p\Vert u_\ell)\le(\log M-\log A^2+2)V+T_1,
 \qquad
 T_1=\sum_{f(a)>M}p(a)\log(f(a)/M).
\end{equation}
The entropy variational inequality applied to
$g=\frac13\log^+(f/M)$ gives
\[
 D(p\Vert q)\ge\tfrac13T_1-\log\E_qe^g.
\]
This variational inequality follows directly by comparing $p$ with the
probability distribution proportional to $e^gq$ and using nonnegativity
of relative entropy. By H\"older,
\[
 \E_q e^g\le1+M^{-1/3}R^{2/3}p\{f>M\}^{1/3}.
\]
When $f>M\ge8$, one has $f\le4(\sqrt f-1)^2$. Thus
\[
 p\{f>M\}\le8(1-A)\le8V.
\]
The extra term in the bound on $\E_qe^g$ is at most $V$ by the choice of
$M$. Consequently $T_1\le3D(p\Vert q)+3V$. Substituting into
\eqref{eq:smallV}, and using $-\log A^2\le\log2$ and
$\log16+5<8$, proves the stronger bound with $2\log(R/V)$ in place of
$4\log(R/V)$.

If $V>1/2$, apply the entropy variational inequality to
$\frac12\log(\ell q)$. Combining it with
$D(p\Vert u_\ell)=D(p\Vert q)+\E_p\log(\ell q)$ yields
\[
 D(p\Vert u_\ell)\le3D(p\Vert q)+2\log R
 \le3D(p\Vert q)+4V\log(R/V).
\]
This proves \eqref{eq:scalar}. Zeros follow by limiting conventions; if
$D(p\Vert q)=\infty$ the conclusion is automatic.
\end{proof}

\section{The improved entropy profile and clock}
\label{sec:entropy}
We can now close the feedback loop.  The partner weights have a bounded summed
moment below a critical exponent.  Combining that moment with the two separate
entropy losses yields a differential-style statement: when the current entropy
is $E$, one sweep removes at least a controlled fraction of $E$.  Integrating
that profile over successive entropy scales gives an explicit number of sweeps
needed to reach any target $B$.

\begin{lemma}[A subcritical endpoint moment]\label{lem:W-moment}
For the weights of \eqref{eq:W}, define
\[
 R_j=(j+1)^{1/2}\sum_{k\le j}W_{jk}^{3/2},
 \qquad C_R=3+2\sqrt2<6.
\]
On every conditioning atom in Section~\ref{sec:information},
\begin{equation}\label{eq:qmoment}
 (j+1)^{1/2}\sum_a q_j(a)^{3/2}\le R_j,
 \qquad 1\le R_j\le(j+1)^{1/2},
 \qquad \sum_{j=1}^{n-1}R_j\le C_Rn.
\end{equation}
\end{lemma}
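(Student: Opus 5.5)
This is the case $\beta=1/2$ of Lemma~\ref{lem:beta-moment}. Since $1/2<\beta_c=\log_2(3/2)\approx0.585$, I would repeat that argument with $\beta=1/2$ and compute the constant explicitly. The proof has three steps: a pointwise moment bound on each conditioning atom, the two elementary bounds on $R_j$, and a summed recursion over dyadic blocks.

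\emph{Step 1 (the pointwise moment bound).} Fix an atom of $\mathcal G_{j+1}$. The matrix $(p_k(a))_{k\le j,\,a}$ is doubly stochastic on the $j+1$ remaining labels. The map $x\mapsto x^{3/2}$ is convex, so Jensen with probability weights $W_{jk}$ gives
\[
q_j(a)^{3/2}=\Bigl(\sum_k W_{jk}p_k(a)\Bigr)^{3/2}\le\sum_kW_{jk}p_k(a)^{3/2}\le\sum_kW_{jk}p_k(a).
\]
This is too lossy, so I would instead write $q_j(a)=\sum_k W_{jk}p_k(a)$ and apply H\"older in the form $\bigl(\sum_k W_{jk}p_k(a)\bigr)^{3/2}\le\bigl(\sum_k W_{jk}^{3/2}p_k(a)\bigr)\bigl(\sum_k p_k(a)\bigr)^{1/2}$. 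The last factor equals $1$ because the columns sum to one. Summing over $a$ and using that the rows sum to one gives $\sum_a q_j(a)^{3/2}\le\sum_kW_{jk}^{3/2}$. Multiplying by $(j+1)^{1/2}$ yields the first inequality of \eqref{eq:qmoment}.

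\emph{Step 2 (the bounds $1\le R_j\le(j+1)^{1/2}$).} The vector $W_j$ is a probability vector on $j+1$ points. Hence $\sum_kW_{jk}^{3/2}\le\max_kW_{jk}^{1/2}\le1$, which gives the upper bound. Power-mean (Jensen) gives $\sum_kW_{jk}^{3/2}\ge(j+1)^{-1/2}$, which gives the lower bound.

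\emph{Step 3 (the summed bound).} Let $U_j=\sum_{k\le j}W_{jk}^{3/2}$ and $T_d=\sum_{j<2^d}U_j$. For $j=a+i$ with $a=2^{d-1}$, the recursion \eqref{eq:W} gives
\[
U_j=a\,(2a)^{-3/2}+2^{-3/2}U_i.
\]
Summing over $i<a$ gives
\[
T_d=T_{d-1}+2^{-3/2}T_{d-1}+a^2(2a)^{-3/2}=(1+2^{-3/2})T_{d-1}+2^{-3/2}2^{(d-1)/2},
\]
which matches the general formula at $\beta=1/2$. Normalize by setting $Z_d=T_d/2^{d/2}$. Then
\[
Z_d=\Bigl(2^{-1/2}+\tfrac14\Bigr)Z_{d-1}+\tfrac14,\qquad Z_0=1.
\]
The multiplier is about $0.957<1$, and the fixed point is $C_{1/2}=1/(3-2\sqrt2)=3+2\sqrt2=C_R$. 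Since $Z_0=1\le C_R$ and the affine map is increasing and contracting toward $C_R$, induction gives $Z_d\le C_R$. Finally, $(j+1)^{1/2}\le n^{1/2}$, so
\[
\sum_{j=1}^{n-1}R_j\le n^{1/2}T_d\le n^{1/2}\,2^{d/2}C_R=C_Rn,
\]
and $C_R<6$ is numerical.

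\emph{Main obstacle.} The delicate points are the H\"older step in Step~1, where both the row and the column stochasticity of $(p_k(a))$ must be used, and the exact bookkeeping of the dyadic recursion in Step~3. In particular I need to check that lower inputs of the block with $j\ge a$ contribute exactly $a\cdot(2a)^{-3/2}$. I also need to check that the inputs $j<a$ are accounted for by $T_{d-1}$, since their weights are identical to those in the smaller cube.
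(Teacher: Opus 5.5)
Your proposal is correct and follows the paper's proof essentially verbatim. Your H\"older step with $\sum_k p_k(a)=1$ is exactly the paper's Jensen inequality $\bigl(\sum_k p_k(a)W_{jk}\bigr)^{3/2}\le\sum_k p_k(a)W_{jk}^{3/2}$ with column weights $p_k(a)$. Your recursion $T_d=(1+2^{-3/2})T_{d-1}+2^{-3/2}2^{(d-1)/2}$, the normalization by $2^{d/2}$, and the fixed point $3+2\sqrt2$ also coincide with the paper's. The bookkeeping points you flag check out: the weights in \eqref{eq:W} depend only on $j$ and not on $d$, and the $a$ lower inputs contribute exactly $a(2a)^{-3/2}$ to each $U_{a+i}$.
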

\begin{proof}
Doubly stochasticity gives, for each label $a$,
\[
 \left(\sum_{k\le j}p_k(a)W_{jk}\right)^{3/2}
 \le\sum_{k\le j}p_k(a)W_{jk}^{3/2}.
\]
Sum over $a$ to obtain the first assertion. The other bounds for an
individual $R_j$ follow from convexity and the fact that $W_j$ is a
probability row.

Let $U_j=\sum_{k\le j}W_{jk}^{3/2}$ and $T_d=\sum_{j<2^d}U_j$.
The recursion \eqref{eq:W} gives exactly
\begin{equation}\label{eq:recurrence}
 T_d=(1+2^{-3/2})T_{d-1}+2^{-3/2}2^{(d-1)/2},
 \qquad T_0=1.
\end{equation}
Dividing by $2^{d/2}$ yields a scalar affine recursion with multiplier
$1/\sqrt2+1/4<1$ and additive term $1/4$. Its fixed point is
$3+2\sqrt2=C_R$, which exceeds the initial value. Hence
$T_d\le C_R2^{d/2}$, and
$\sum_{j=1}^{n-1}R_j\le n^{1/2}T_d\le C_Rn$.
\end{proof}

\begin{theorem}[Dimension-free entropy profile]\label{thm:entropy}
For every nonuniform initial law $\mu$, one independent sweep has
$\Delta=\Ent(\mu)-\Ent(X\circ\mu)>0$ and
\begin{equation}\label{eq:entropy-profile}
 \frac{\Ent(\mu)}{\Delta}
 \le100+12\log^+\!\left(\frac n{\Ent(\mu)}\right).
\end{equation}
\end{theorem}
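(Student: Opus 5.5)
The plan is to combine the chain rule \eqref{eq:chain}, the uniform scalar estimate Lemma~\ref{lem:scalar}, the endpoint moment Lemma~\ref{lem:W-moment}, and the two separate losses of Lemma~\ref{lem:losses}. Fix the initial law $\mu$ and write $E=\Ent(\mu)$.

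\textbf{Step 1: apply the scalar estimate row by row.} For each $j\ge1$ and each atom of $\mathcal G_{j+1}$, apply Lemma~\ref{lem:scalar} with $p=p_j$, $q=q_j$, $\ell=j+1$ and $R=R_j$. Its hypotheses hold by \eqref{eq:qmoment}. This gives
\[
 D(p_j\Vert u_{j+1})\le3D(p_j\Vert q_j)+\bigl[4\log(R_j/V_j)+8\bigr]V_j .
\]

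\textbf{Step 2: sum and control the logarithmic term.} Take expectations and sum over $j$ using \eqref{eq:chain}. The term $\sum_j\E V_j\log(R_j/V_j)$ is handled with the log-sum inequality: the function $(V,R)\mapsto V\log(R/V)$ is jointly concave, so the sum is at most $S\log(\sum_jR_j/S)\le S\log(C_Rn/S)$. The result is
\[
 E\le3\mathcal C+4S\log(6n/S)+8S .
\]

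\textbf{Step 3: substitute the two losses.} Use $\mathcal C\le\Delta$ and $S\le\Delta/L$ from \eqref{eq:two-losses}. The map $S\mapsto S\log(6n/S)$ increases while $S\le6n/e$, so it can be evaluated at $S=\Delta/L$. To avoid doing a case split on $S$, write $v=LS/\Delta\le1$ and use $v\log(1/v)\le1/e$, as is done in Proposition~\ref{prop:v4-clock}. This yields
\[
 x:=E/\Delta\le c_1+c_2\log^+(n/\Delta)
\]
with explicit constants $c_1$ and $c_2\approx4/L\approx5.8$.

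\textbf{Step 4: close the self-referential bound.} Since $\log(n/\Delta)=\log(n/E)+\log x$, the inequality becomes
\[
 x\le c_1+c_2\log^+(n/E)+c_2\log x .
\]
The term $c_2\log x$ is absorbed using $\log x\le x/(2c_2)+\log(2c_2)-1$. This gives $x\le 2c_1+2c_2\log^+(n/E)+O(1)$, and the bookkeeping should show the constants fit under $100$ and $12$ since $2c_2<12$.

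\textbf{Positivity of $\Delta$.} Suppose $\Delta=0$. Then $\mathcal C=S=0$, so every $p_j$ is uniform by the definition of $V$. Hence $E=0$, contradicting the assumption that $\mu$ is nonuniform.

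\textbf{Main obstacle.} The hardest step is Step~2: justifying the concavity and log-sum argument across conditioning atoms and across $j$, with $R_j$ deterministic, and handling the case $V_j>1/2$ of Lemma~\ref{lem:scalar}, where the bound is $4V\log(R/V)$ and is still covered. A second concern is the regime where $\Delta$ is comparable to $n$, where $\log^+$ vanishes. There the convention $v\log(1/v)\le1/e$ together with $C_R<6$ should keep everything bounded by an absolute constant, but the numerical constants need to be checked with some care.
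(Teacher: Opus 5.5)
Your proposal is correct and matches the paper's proof step for step. Both use Lemma~\ref{lem:scalar} row by row, then the chain rule plus log-sum to reach $E\le3\mathcal C+4S\log(C_Rn/S)+8S$, then the substitution $v\log(1/v)\le1/e$ with the two separate losses, and finally absorb $\log x$ by a tangent-line bound. The paper rounds $4/L$ up to $6$ and takes the tangent at $12$, getting $84+12\log^+(n/E)$; your tangent at $2c_2$ gives a constant near $67$, which also fits. The one point you should state explicitly is $x\ge1$. It follows from $\Delta\le\Ent(\mu)$ and is what justifies $\log^+(n/\Delta)\le\log^+(n/E)+\log x$.
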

\begin{proof}
Apply Lemma~\ref{lem:scalar} on each conditioning atom and sum using
\eqref{eq:chain}, \eqref{eq:qmoment}, and the log-sum inequality. Write
$E=\Ent(\mu)$ and $R_\Sigma=\sum_{j=1}^{n-1}R_j$. Then
\begin{equation}\label{eq:ECS}
 E\le3\mathcal C+4S\log(R_\Sigma/S)+8S.
\end{equation}
If $S=0$, each $p_j$ is uniform and $E=0$. Thus $E>0$ implies $S>0$ and,
by \eqref{eq:two-losses}, $\Delta>0$.

Put $c=1/L\le3/2$ and $x=E/\Delta\ge1$. The two separate inequalities
in \eqref{eq:two-losses} give $\mathcal C\le\Delta$ and $S\le c\Delta$.
For $v=S/(c\Delta)\in(0,1]$, the elementary bound
$v\log(1/v)\le1/e$ yields
\begin{align*}
 S\log(C_Rn/S)
 &=c\Delta\left[v\log\frac{C_Rn}{c\Delta}+v\log(1/v)\right]\\
 &\le c\Delta\left[\log^+\frac{C_Rn}{c\Delta}+1/e\right].
\end{align*}
This avoids any monotonicity assumption on $s\log(C_Rn/s)$ outside its
increasing range. Dividing \eqref{eq:ECS} by $\Delta$ gives
\[
 x\le3+8c+4c/e+4c\log^+\frac{C_Rn}{c\Delta}.
\]
Since $C_R/c<6$, $\log6<2$, and $x\ge1$, it follows that
\[
 x\le30+6\log^+(n/E)+6\log x.
\]
Concavity of the logarithm gives
$\log x\le x/12+\log12-1\le x/12+2$. Absorb $x/2$ on the right to obtain
$x\le84+12\log^+(n/E)$, and enlarge $84$ to $100$.
\end{proof}

\begin{corollary}[Explicit sweep clock]\label{cor:clock}
For $1\le B\le n$, define
\begin{align}
 J(B)&=\left\lceil\log_2(n/B)\right\rceil,\nonumber\\
 R(B)&=\left\lceil100\log^+(dL)\right\rceil+
 \sum_{j=1}^{J(B)}\left\lceil L(100+12jL)\right\rceil.
 \label{eq:clock}
\end{align}
Then $\Ent(q_{R(B)})\le B$ and, with an absolute implicit constant,
\begin{equation}\label{eq:clock-order}
 R(B)=O\!\left(\log(d+2)+J(B)^2\right).
\end{equation}
\end{corollary}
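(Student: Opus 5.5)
The plan is to iterate the entropy profile of Theorem~\ref{thm:entropy} over dyadic entropy bands, in the same way as in the proof of Proposition~\ref{prop:v4-clock}. Write $E_r=\Ent(q_r)$; this sequence is nonincreasing. Applying \eqref{eq:entropy-profile} to the current law $\mu=q_r$ gives $E_{r+1}\le E_r(1-1/x_r)$, where $x_r\le100+12\log^+(n/E_r)$. Consequently, as long as $E_r$ stays above a threshold $\tau$, each sweep multiplies the entropy by at most $\exp(-1/(100+12\log^+(n/\tau)))$.

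\emph{Initial phase.} Started from the identity, $E_0=\log n!\le n\log n=ndL$. While $E\ge n$ we have $\log^+(n/E)=0$, so each sweep multiplies $E$ by at most $e^{-1/100}$. After $\lceil100\log^+(dL)\rceil$ sweeps the entropy is therefore at most $\max\{n,\,ndL\,e^{-\log^+(dL)}\}\le n$. If $dL<1$ then $E_0\le n$ already, which is why $\log^+$ appears.

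\emph{Band phase.} For $j=1,\dots,J(B)$, suppose the entropy is at most $n2^{-(j-1)}$ at the start of the $j$th band. While $E>n2^{-j}$ we have $\log^+(n/E)\le jL$, so each sweep contracts by at least the factor $\exp(-1/(100+12jL))$. To halve the entropy we need at most $L(100+12jL)$ sweeps, hence $\lceil L(100+12jL)\rceil$ suffice. If the entropy falls below $n2^{-j}$ earlier, monotonicity keeps it there. After all bands, $E\le n2^{-J(B)}\le B$ because $J(B)=\lceil\log_2(n/B)\rceil$. The case $J(B)=0$ (that is, $B=n$) is covered by the initial phase alone. Since $q_r*q_s=q_{r+s}$, we may apply the profile to the actual law at each step.

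\emph{Asymptotic bound.} The sum of the band terms is $\sum_{j\le J}(100L+12L^2j+1)=O(J+J^2)$, and the initial term is $O(\log(d+2))$. Together these give \eqref{eq:clock-order}.

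The only step that needs care is verifying the halving bound within a band. Because $\log^+(n/E)$ increases as $E$ decreases, we must use the worst value in the band, namely the lower endpoint $n2^{-j}$, rather than the current value. We also use $1-1/x\le e^{-1/x}$. With these two points handled the iteration closes, and the rest is routine.
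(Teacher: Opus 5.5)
Your proposal is correct and follows essentially the same route as the paper. Both arguments run an initial phase that contracts entropy by $e^{-1/100}$ per sweep while it exceeds $n$, then use dyadic halving bands whose profile denominator is bounded by $100+12jL$ at the lower band endpoint, with monotonicity of entropy covering early crossings, and finally sum the affine-in-$j$ ceilings to get the order bound.
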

\begin{proof}
Initially $\Ent(q_0)=\log(n!)\le ndL$. While entropy exceeds $n$,
Theorem~\ref{thm:entropy} contracts it by a factor at most
$1-1/100\le e^{-1/100}$ per sweep. The first term in \eqref{eq:clock}
therefore brings it to at most $n$. While it is between $n2^{-j}$ and
$n2^{-(j-1)}$, the denominator in \eqref{eq:entropy-profile} is at most
$100+12jL$. The $j$th summand suffices to halve the previous threshold,
unless that threshold has already been crossed. The last dyadic threshold
is at most $B$. Summing the affine-in-$j$ ceilings proves
\eqref{eq:clock-order}.
\end{proof}

\begin{remark}[What changed in the entropy proof]
The exponent $3/2$ is subcritical for the endpoint-weight recursion and
therefore gives a summed moment of order $n$.  The scalar estimate also avoids
a dimension-dependent split near zero square-root overlap.  These features are
what allow the entropy denominator used in the main argument to be
logarithmic with constants independent of $d$.
\end{remark}

\section*{Acknowledgments}
Part of this research was carried out at AxiomMath AI.  The author thanks
Evan Chen, Vasily Ilin, Simon Mahns, Ken Ono, and Jesse Thorner for fruitful
discussions around AI for mathematics.  The author also acknowledges
substantial assistance from OpenAI's ChatGPT in exploratory calculations, proof
checking, and preparation of the manuscript.  The mathematical claims and any
remaining errors are the author's responsibility.

\end{document}